\documentclass[11pt,a4paper,reqno]{amsart}

\usepackage[T1]{fontenc}
\usepackage[utf8]{inputenc}
\usepackage{lmodern}
\usepackage{microtype}
\usepackage{amsmath,amssymb,amsthm,mathtools}
\usepackage{enumitem}
\usepackage[margin=32mm]{geometry}
\usepackage[colorlinks=true,citecolor=blue,linkcolor=blue,urlcolor=blue]{hyperref}
\hypersetup{
  pdftitle={Cayley-tree pseudo-orbit tracing: period subgroups and relative geometry},
  pdfauthor={Hui Xu}
}

\theoremstyle{plain}
\newtheorem{theorem}{Theorem}[section]
\newtheorem{proposition}[theorem]{Proposition}
\newtheorem{lemma}[theorem]{Lemma}
\newtheorem{corollary}[theorem]{Corollary}
\newtheorem{thma}{Theorem}

\theoremstyle{definition}
\newtheorem{definition}[theorem]{Definition}
\theoremstyle{remark}

\newtheorem{example}[theorem]{Example}
\numberwithin{equation}{section}

\newcommand{\Z}{\mathbb Z}
\newcommand{\N}{\mathbb N}

\newcommand{\Cay}{\operatorname{Cay}}
\newcommand{\diam}{\operatorname{diam}}

\newcommand{\Homeo}{\operatorname{Homeo}}

\title[Cayley-tree POTP]
{Cayley-tree pseudo-orbit tracing: period subgroups and relative geometry}

\author[H.~Xu]{Hui Xu}

\address[H. Xu]{Department of Mathematics, Shanghai Normal University, Shanghai, 200234, China}
\email{huixu@shnu.edu.cn}

\thanks{Hui Xu is supported by NSFC 12201599.}

\subjclass[2020]{Primary 37B10, 37B05; Secondary 20F65, 20E06}
\keywords{ pseudo-orbit tracing property,  subshift of finite
type, virtually free group,  commensurated subgroup,
Schlichting completion, strong topological Rokhlin property}

\begin{document}
\raggedbottom

\begin{abstract}
We introduce Cayley-tree POTP, obtained by imposing the pseudo-orbit
equations of a finitely generated group action only along a spanning tree of
a Cayley graph.  For zero-dimensional actions, we characterize this property
by equicontinuity along normalized replacement paths; for subshifts, the
criterion is expressed in the right-coset space of the common left-period
subgroup.  These criteria characterize virtual freeness and, for
commensurated subgroup pairs, identifies Cayley-tree POTP of the coset full
shift with relative quasi-tree geometry and a finite Bass--Serre
decomposition.  For infinite-index VFP pairs, Cayley-tree POTP of the coset
full shift is equivalent to virtual cohomological codimension one, although
ordinary POTP holds for every such shift.  Finally, we prove that the strong
topological Rokhlin property passes
to finite-index overgroups.  Consequently every finitely generated virtually
free group has this property, answering the virtually cyclic case posed by
Doucha, and Cayley-tree POTP is generic for its Cantor actions.
\end{abstract}

\maketitle

\section{Introduction}

\subsection{Cayley-tree POTP and virtual freeness}

The pseudo-orbit tracing property (POTP), also called shadowing, is a
standard property in hyperbolic dynamics.  It originated in the work
of Anosov and Bowen and has since been studied for actions of finitely
generated groups; see, for example,
\cite{Anosov,BowenAxiomA,Walters,AokiHiraide,Pilyugin,OsipovTikhomirov,ChungLee}.
We use \emph{POTP} consistently for the property and reserve \emph{trace}
for the relation between a point and a pseudo-orbit.

Let a finitely generated group $\Gamma$ act by homeomorphisms on a compact
metric space $(X,d)$, and fix a finite symmetric generating set
$S=S^{-1}$.  Write $C_S=\operatorname{Cay}(\Gamma,S)$ for the left Cayley
graph, with edges $\{g,ag\}$ for $g\in\Gamma$ and $a\in S$.  A family
$(x_g)_{g\in\Gamma}$ is a $\delta$-pseudo-orbit if
\[
 d(ax_g,x_{ag})<\delta\qquad(a\in S,\ g\in\Gamma),
\]
and it is $\varepsilon$-traced by $x\in X$ if
$d(gx,x_g)<\varepsilon$ for every $g\in\Gamma$.  The action has
\emph{ordinary POTP} if, for every $\varepsilon>0$, there exists $\delta>0$
such that every $\delta$-pseudo-orbit is $\varepsilon$-traced.  This notion
is independent of $S$
\cite[Proposition~1]{OsipovTikhomirov}.

We ask how many of the Cayley-edge equations are actually needed.  If
$H\subseteq C_S$ is connected and spanning, an $(H,\delta)$-pseudo-orbit
is required to satisfy $d(ax_g,x_{ag})<\delta$ only when
$\{g,ag\}\in E(H)$.  The action has $H$-POTP if, for every
$\varepsilon>0$, there exists $\delta>0$ such that every
$(H,\delta)$-pseudo-orbit is $\varepsilon$-traced.  For a spanning tree
$T\subseteq C_S$ we write $T$-POTP.  Finally, an action has
\emph{Cayley-tree POTP} if $T$-POTP holds for some finite symmetric
generating set $S$ and some spanning tree $T\subseteq C_S$.  Thus both
$S$ and $T$ may be chosen to suit the action; neither is fixed in advance.
Since deleting edges weakens the pseudo-orbit equations,
\[
 T\text{-POTP}\quad\Longrightarrow\quad\text{ordinary POTP},
\]
and the reverse implication is the substantive issue.
For the full shift, Theorem~\ref{thm:intro-main-universal} shows that this
reverse implication is equivalent to virtual freeness of $\Gamma$.

We first consider symbolic actions.  Throughout,
$\mathcal A$ denotes a finite alphabet.  The full shift
$\mathcal A^\Gamma$ carries the right-shift action
$(\sigma_gx)(h)=x(hg)$.  Formal definitions of subshifts and subshifts of
finite type are given in Subsection~\ref{subsec:prelim-symbolic-periods}.
Chung and Lee proved that a nonempty subshift has
ordinary POTP exactly when it is a subshift of finite type (SFT)
\cite[Theorem~3.2]{ChungLee}.  Hence every full shift has ordinary POTP,
regardless of the geometry of $\Gamma$.  More generally, ordinary POTP holds
for every nonempty SFT.  By contrast, Cayley-tree POTP fails for the full
shift over $\mathbb Z^2$ and, for the coset shifts in
Theorem~\ref{thm:intro-main-commensurated}, is equivalent to relative
quasi-tree geometry.

For a spanning tree $T\subseteq C_S$, let $d_T$ be its path metric and put
\[
 \operatorname{str}_S(T)
 :=\sup\{d_T(g,ag):g\in\Gamma,\ a\in S\}.
\]
Recall that $\Gamma$ is \emph{virtually free} if it contains a free subgroup
of finite index.
\begin{thma}
\label{thm:intro-main-universal}
Let $\Gamma$ be finitely generated and let $|\mathcal A|\geq2$.  The
following are equivalent.
\begin{enumerate}[label=\textup{(\roman*)}]
\item $\Gamma$ is virtually free;
\item some Cayley graph $C_S$ contains a spanning tree $T$ with
$\operatorname{str}_S(T)<\infty$;
\item the full shift $\mathcal A^\Gamma$ has Cayley-tree POTP;
\item every compact metrizable $\Gamma$-action with ordinary POTP has
Cayley-tree POTP.
\end{enumerate}
\end{thma}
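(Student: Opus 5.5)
I will prove the cycle (i)$\Rightarrow$(ii)$\Rightarrow$(iv)$\Rightarrow$(iii)$\Rightarrow$(i).

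\emph{(i)$\Rightarrow$(ii).} A finitely generated virtually free group $\Gamma$ with free subgroup $F$ of finite index acts properly and cocompactly on a locally finite tree, for instance the Bass--Serre tree of a finite graph of finite groups, or the tree obtained from Stallings' ends theorem and Dunwoody accessibility. I will realize a spanning tree of a Cayley graph directly. Choose a free basis $b_1,\dots,b_k$ of $F$, where $F$ may be taken normal, and a transversal $R\ni 1$ of $F$ in $\Gamma$. Take $S$ symmetric containing the $b_i^{\pm1}$ and all elements needed to connect the cosets, namely $R\cup R^{-1}$ together with finitely many products $r b r'^{-1}$. Build $T$ as follows: on $F$ use the Cayley tree of the free basis; on each coset $Fr$ take the translate of this tree, which uses edges of the left Cayley graph because right multiplication by $r$ commutes with left multiplication by $b_i$; then join each translate $Fr$ to $F$ by the single edge $\{1,r\}$, with $r\in S$. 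The result is a spanning tree. Every Cayley edge $\{g,ag\}$ is then bounded in $d_T$ uniformly, since $g\mapsto ag$ moves by a bounded amount in the quasi-isometry $\Gamma\to F\times R$. Making this uniform bound precise, using that left multiplication by $a$ sends $fr$ to $f'r'$ with $f'f^{-1}$ in a finite set, is routine. So $\operatorname{str}_S(T)<\infty$.

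\emph{(ii)$\Rightarrow$(iv).} Let $L=\operatorname{str}_S(T)$ and suppose the action has ordinary POTP. Given an $(T,\delta')$-pseudo-orbit $(x_g)$ and an edge $\{g,ag\}$, join $g$ to $ag$ by a $T$-path $g=g_0,g_1=s_1g_0,\dots,g_m=ag$ with $m\le L$. The element $a^{-1}s_m\cdots s_1$ is the identity, so $ax_g$ is obtained from $x_g$ by the words $s_m\cdots s_1$. Using uniform continuity of the finitely many maps $s\in S$, iterated at most $L$ times, I will choose $\delta'$ so that the $T$-errors compound to less than $\delta$. Then $d(ax_g,x_{ag})<\delta$, the family is an ordinary $\delta$-pseudo-orbit, and ordinary POTP traces it.

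\emph{(iv)$\Rightarrow$(iii).} The full shift is an SFT and therefore has ordinary POTP by \cite[Theorem~3.2]{ChungLee}.

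\emph{(iii)$\Rightarrow$(i).} This is the main obstacle. Suppose $T$-POTP holds for the full shift with some $S$ and $T$, and suppose $\operatorname{str}_S(T)=\infty$. I will then produce edges $\{g_n,a g_n\}$ with $d_T(g_n,ag_n)\to\infty$. The plan is to build $T$-pseudo-orbits that are exact along $T$ but violate tracing. For the shift, choose $\varepsilon$ so that $\varepsilon$-closeness means agreement on a fixed finite window $W\ni 1$, and let $\delta$ correspond to agreement on a larger window $W'$. Remove the Cayley edge $\{g_n,ag_n\}$ from consideration; it is not in $T$ because its $T$-distance is large. Cutting the $T$-geodesic between $g_n$ and $ag_n$ at an edge splits $T$ into two components $A\ni g_n$ and $B\ni ag_n$, and I will take the cut far from both endpoints. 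Set $x_h=\sigma_h y$ for $h\in A$ and $x_h=\sigma_h z$ for $h\in B$, where $y(1)\ne z(1)$. The only $T$-edge with a defect is the cut edge. To remove that defect as well, I will instead interpolate gradually along the long geodesic, using configurations that agree on $W'$-neighbourhoods except far away. This is the delicate part: it requires the sets $Ag$ near the cut to be far, in Cayley distance, from the window region. The tree structure allows this because the components $A$ and $B$ are separated only through the cut. Any tracing point $x$ would satisfy $x(g_n)\approx y(1)$ and $x(ag_n)\approx z(a)$, which contradicts consistency once $y$ and $z$ are chosen to disagree across $a$. Hence $\operatorname{str}_S(T)<\infty$. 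A spanning tree of bounded stretch makes $\Gamma$ quasi-isometric to the tree $T$, which is locally finite since $T\subseteq C_S$. A group quasi-isometric to a tree is virtually free, by Stallings, Dunwoody and the quasi-isometry invariance of virtual freeness.
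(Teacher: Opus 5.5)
\textbf{Your construction for (i)$\Rightarrow$(ii) fails.} Attaching each translated tree on $Fr$ to $F$ by a single edge $\{1,r\}$ gives a spanning tree of \emph{unbounded} stretch as soon as $F$ is infinite and $[\Gamma:F]\geq2$. Take $f\in F$ and $r\in R\setminus\{e\}\subseteq S$. The Cayley edge $\{f,rf\}$ has $rf=(rfr^{-1})r\in Fr$, so its $T$-path runs through $1$ and $r$. Hence $d_T(f,rf)=|f|_B+1+|rfr^{-1}|_B$, which is unbounded in $f$. For $\Gamma=\mathbb Z\times C_2$, $F=\langle a\rangle$, $R=\{e,b\}$, your tree is exactly $T_2$ of Example~\ref{ex:two-spanning-trees}, where $d_{T_2}(a^n,a^nb)=2|n|+1$.

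The appeal to the quasi-isometry $\Gamma\to F\times R$ is where the argument breaks. $d_T$ between points of distinct cosets is not a product metric, because each coset is attached to $F$ at one vertex only. The paper avoids this by taking Antol\'{\i}n's abstract uniform tree and enlarging $S_0$ to $B_{S_0}(e,L)\setminus\{e\}$, so that the abstract tree edges become Cayley edges.

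Your idea can be repaired directly. Take a left transversal $R$, so that $\Gamma=\bigsqcup_{r\in R}rF$. Keep the edges $\{f,bf\}$ on $F$, and attach \emph{every} vertex $rf$ with $r\neq e$ as a leaf through the Cayley edge $\{f,rf\}$; this is the pattern of $T_1$. If $arf=r'f'$ with $a\in S$, then $f'f^{-1}=r'^{-1}ar$ ranges over a finite set. So $d_T(rf,r'f')\le 2+|r'^{-1}ar|_B$ is uniformly bounded. This gives (i)$\Rightarrow$(ii) without Antol\'{\i}n's theorem and without normality of $F$.

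\textbf{Your step (iii)$\Rightarrow$(i) is not yet an argument.} The ``gradual interpolation'' is never specified, and it is not needed. What is missing is the correct choice of cut and of $y,z$. Take $x_h=\sigma_hy$ on $A$, $x_h=\sigma_hz$ on $B$, and cut edge $\{u,v\}$ with $v=bu$. The two oriented defects are $d(\sigma_vy,\sigma_vz)$ and $d(\sigma_uy,\sigma_uz)$, and these are below $\delta$ once $y=z$ on $W'u\cup W'v$. Taking $W\supseteq\{e\}\cup S$, tracing at $g_n$ gives $x(ag_n)=y(ag_n)$, and tracing at $q=ag_n$ gives $x(q)=z(q)$. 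Your ``$x(g_n)\approx y(1)$, $x(ag_n)\approx z(a)$'' ignores the right-shift indexing. So you need $q\notin W'u\cup W'v$ and $z$ differing from $y$ only at $q$.

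Distance from the endpoints in $d_T$ is not the relevant criterion for choosing the cut. Rather, $v\mapsto qv^{-1}$ is injective along the path, so at most $|W'|$ of its vertices satisfy $qv^{-1}\in W'$. Hence any replacement path with more than $2|W'|$ edges contains a good cut edge. With this single cut the argument closes. It is Theorem~\ref{thm:replacement-equicontinuity} specialized to the full shift, whereas the paper's Theorem~\ref{thm:full-shift-detector} builds the pseudo-orbit from an equivalence relation on coordinate variables. Your steps (ii)$\Rightarrow$(iv)$\Rightarrow$(iii) agree with the paper's.
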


The geometric equivalence between virtual freeness and the existence of a
uniform spanning tree is due to Antol\'{\i}n
\cite[Theorem~4.7]{Antolin}.  The dynamical assertions in
Theorem~\ref{thm:intro-main-universal} show that Cayley-tree POTP of the
full shift is equivalent to that geometry, and virtual freeness is exactly the
condition under which \emph{every} ordinary-POTP action can be reduced to a
tree support.  Thus the theorem converts an established geometric
characterization into a dynamical test, rather than adding another spanning
tree criterion.  In particular, the full shift over $\mathbb Z^2$ has
ordinary POTP but no Cayley-tree POTP.  The property also depends on the
chosen tree: Example~\ref{ex:two-spanning-trees} exhibits one action and two
spanning trees in the same Cayley graph for which $T_1$-POTP holds and
$T_2$-POTP fails.

Cayley-tree POTP differs from the POTP of a tree-shift, whose configurations are
indexed by a rooted tree \cite{Bucki}.  It is also different from the
translation-invariant support formalism of \cite{LinChenZhou}: a Cayley
spanning tree may retain different generator edges at different vertices.

\subsection{The replacement-path criterion}

The virtual-freeness characterization follows from an exact criterion for a
fixed tree.  For vertices $u,w$ of $T$, let $[u,w]_T$ denote the unique simple
path, also viewed as its vertex set, and define the normalized replacement
set
\begin{equation}
 \mathcal R_S(T)
 =\{agv^{-1}:a\in S,\ g\in\Gamma,\ v\in[g,ag]_T\}.
 \label{eq:intro-replacement-set}
\end{equation}
Here $agv^{-1}$ is the left multiplier carrying $v$ to the terminal
endpoint $ag$.  The notation $\mathcal R_S(T)$ denotes a subset, not in
general a subgroup.

For a nonempty subshift $X\subseteq\mathcal A^\Gamma$, its common
left-period subgroup is
\[
 P_\ell(X)=\{p\in\Gamma:x(ph)=x(h)
       \text{ for all }x\in X\text{ and }h\in\Gamma\}.
\]
Unlike $\mathcal R_S(T)$, this is always a subgroup, though it need not be
normal.  If $P\leq\Gamma$ and $E\subseteq\Gamma$, then
$P\backslash E:=\{Pg:g\in E\}$ is the image of $E$ in the right-coset
space; no quotient-group structure is intended.  A compact space is
zero-dimensional if it has a basis of clopen sets.  Equicontinuity and the
other dynamical terminology used below are recalled in
Subsection~\ref{subsec:prelim-actions-POTP}.

\begin{thma}
\label{thm:intro-main-replacement}
Let $S=S^{-1}$ be a finite generating set of $\Gamma$ and let
$T\subseteq C_S$ be a spanning tree.
\begin{enumerate}[label=\textup{(\alph*)}]
\item A zero-dimensional compact metrizable $\Gamma$-action has $T$-POTP
if and only if it has ordinary POTP and the family
$\{x\mapsto rx:r\in\mathcal R_S(T)\}$ is equicontinuous.
\item A nonempty subshift $X\subseteq\mathcal A^\Gamma$ has $T$-POTP if
and only if it is an SFT and
\[
 |P_\ell(X)\backslash c\mathcal R_S(T)|<\infty
 \qquad(c\in\Gamma).
\]
Equivalently, $X$ is an SFT and
$\{\sigma_r|_X:r\in\mathcal R_S(T)\}$ is equicontinuous.
\end{enumerate}
\end{thma}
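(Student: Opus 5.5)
For part (a), necessity and sufficiency are treated separately, and (b) is then reduced to (a) together with the Chung--Lee theorem \cite[Theorem~3.2]{ChungLee}.

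\emph{Necessity in (a).} Since $T$-POTP implies ordinary POTP, only equicontinuity needs proof. Fix $\varepsilon>0$ and let $\delta$ witness $T$-POTP for $\varepsilon/3$. Take $a\in S$, $g\in\Gamma$ and $v\in[g,ag]_T$, and let $y,z$ be points with $d(y,z)$ small; we want to show $d(rz, ry)<\varepsilon$ for $r=agv^{-1}$. Removing the edge $e=\{g,ag\}$ from $T$ (when $e\notin T$) splits $T$ into two components, $A\ni g$ and $B\ni ag$. The vertex $v$ lies on the tree path, so it sits in one of them, or $v=ag$.

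We build a $(T,\delta)$-pseudo-orbit by a cut at a tree edge $\{w,w'\}$ of $[g,ag]_T$ lying between $v$ and $ag$. If $v=ag$ there is nothing to prove; otherwise such an edge exists. Put $x_h=h v^{-1}z$ for $h$ on the side containing $v$, and $x_h=h v^{-1}y$ on the other side. Across the cut edge with label $b$, the defect is $d(b w v^{-1}z, b w v^{-1} y)$. This is below $\delta$ once $d(y,z)<\eta$, where $\eta$ comes from uniform continuity of the finitely many maps $b\in S$ applied to $w v^{-1}$.

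Here is the difficulty: $wv^{-1}$ is not bounded. The fix is to cut instead at the edge adjacent to $v$. That is, take $w=v$ and let $w'$ be the next vertex toward $ag$. Then the defect is $d(bz,by)$, controlled uniformly by continuity of $b\in S$. The resulting tracing point $x$ satisfies $d(vx,z)<\varepsilon/3$ and $d(agx, agv^{-1}y)<\varepsilon/3$. On the $v$-side, if $ag$ is in the other component, we also get $d(agx,agv^{-1}y)$ small; similarly, comparing with the unperturbed orbit ($y=z$) gives $d(agv^{-1}z, agx')$ small. Here zero-dimensionality is used: replace the metric by an ultrametric-type comparison via clopen partitions. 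Two points in the same small clopen set are then traced into the same cell, and the triangle inequality gives $d(ry,rz)<\varepsilon$ uniformly in $r$.

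\emph{Sufficiency in (a).} Given a $(T,\delta)$-pseudo-orbit, we will correct it to a genuine $(C_S,\delta')$-pseudo-orbit and apply ordinary POTP. Fix a root and use a clopen partition $\mathcal P$ of mesh less than $\varepsilon$. Equicontinuity plus zero-dimensionality give a finer clopen partition $\mathcal Q$ such that every $r\in\mathcal R_S(T)$ maps $\mathcal Q$-cells into $\mathcal P$-cells. Choose $\delta$ below the Lebesgue number of $\mathcal Q$ pushed through $S$. Then along each tree path $[g,ag]_T$ the tree equations propagate cell-membership exactly; this is where ultrametric discreteness prevents error accumulation. It yields $d(ax_g,x_{ag})<\delta'$ for all Cayley edges, so ordinary POTP applies.

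\emph{Part (b).} By Chung--Lee, ordinary POTP is equivalent to being an SFT. For a subshift, $\{\sigma_r|_X : r\in\mathcal R\}$ is equicontinuous iff for every coordinate $c$ the functions $x\mapsto x(cr)$, $r\in\mathcal R$, factor through a fixed finite window. This holds iff the coordinates $\{cr : r\in\mathcal R\}$ determine only finitely many distinct coordinate functions on $X$. Two coordinates $h,h'$ give the same function on $X$ iff $h'h^{-1}\in P_\ell(X)$; the nontrivial direction is that an SFT with equal coordinate functions forces a left period, which follows by shift-invariance. This yields exactly the finiteness of $P_\ell(X)\backslash c\mathcal R_S(T)$.

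\emph{Main obstacle.} The hard step is the necessity argument in (a): making a single cut give uniform control independent of the unbounded multiplier $r$, in a way that uses zero-dimensionality correctly.
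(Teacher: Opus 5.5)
\textbf{Necessity in (a) does not close.} Your overall plan matches the paper. Sufficiency in (a) is the paper's clopen-partition propagation along $[g,ag]_T$. What makes that propagation work is the identity $r_{i-1}=r_ib_i$ for $r_i=agv_i^{-1}$. Note also that $\mathcal P$ must have mesh below the ordinary-POTP constant $\delta'$, not below $\varepsilon$. Part (b) is exactly Lemma~\ref{lem:finite-determinacy-periods} combined with Proposition~\ref{prop:period-equicontinuity}.

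Your pseudo-orbit is fine. Cutting the edge $\{v,bv\}$ of $[g,ag]_T$ and setting $x_h=hv^{-1}z$ on the component of $v$ and $x_h=hv^{-1}y$ on the component of $ag$ gives oriented defects $d(bz,by)$ and $d(z,y)$, both uniformly controlled. The trouble is the final estimate.
\begin{itemize}
\item The bound $d(vx,z)<\varepsilon/3$ is useless, because moving it to $ag$ requires the unbounded multiplier $agv^{-1}$.
\item A second tracing point $x'$ for the unperturbed orbit has no relation to $x$.
\item The claim that points in one small clopen cell are ``traced into the same cell'' presupposes that $agv^{-1}$ maps small cells into small cells. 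That is the equicontinuity you are trying to prove, so the argument is circular.
\end{itemize}

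\textbf{The missing step.} Use the deleted Cayley edge $\{g,ag\}$ itself. After your cut, $g$ lies in the same component as $v$, so tracing at $g$ gives $d(gx,gv^{-1}z)<\varepsilon'$. Since $ag$ is a Cayley neighbor of $g$, one application of the single generator $a$ gives $d(agx,agv^{-1}z)<\varepsilon/3$. For this, $\varepsilon'\leq\varepsilon/3$ must be chosen in advance by uniform continuity of $a$, and $\delta$ must then be taken from $T$-POTP at scale $\varepsilon'$, not at $\varepsilon/3$. Tracing at $ag$ gives $d(agx,agv^{-1}y)<\varepsilon'$. The triangle inequality then yields $d(rz,ry)<\varepsilon$ with a modulus independent of $r=agv^{-1}$.

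This is the paper's proof of Theorem~\ref{thm:replacement-equicontinuity}. The paper cuts at the edge just before $v$ rather than just after it, which is an immaterial difference. In particular, this direction needs no zero-dimensionality and holds for every compact metrizable action.
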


For the full shift, $P_\ell(\mathcal A^\Gamma)=\{e\}$, so the criterion
reduces to bounded stretch.  For a general SFT,
$|P_\ell(X)\backslash c\mathcal R_S(T)|$ may be finite even when
$c\mathcal R_S(T)$ is infinite.  Requiring this finiteness for every
$c\in\Gamma$ depends on how $P_\ell(X)$ intersects its conjugates; we
analyze the finite-height and commensurated cases separately.

\subsection{Coset full shifts}
\label{subsec:intro-two-period-regimes}

We now specialize to shifts indexed by a coset space.  For a subgroup
$P\leq\Gamma$, the \emph{coset full shift}
$\mathcal A^{P\backslash\Gamma}$ is the set of all maps
$P\backslash\Gamma\to\mathcal A$, equipped with the right-shift action
\[
 (\sigma_gx)(Ph)=x(Phg)
 \qquad(g,h\in\Gamma).
\]
Equivalently, it is the subshift of $\mathcal A^\Gamma$ consisting of the
configurations satisfying
\[
 x(pg)=x(g)\qquad(p\in P,\ g\in\Gamma).
\]
When $|\mathcal A|\geq2$, its common left-period subgroup is exactly $P$.
Detailed conventions for
right cosets and the orbital action are given in
Subsections~\ref{subsec:prelim-symbolic-periods} and
\ref{subsec:prelim-coset-graphs}.

The coset criterion in
Theorem~\ref{thm:intro-main-replacement}\textup{(b)} depends on the
intersections of $P$ with its conjugates.  We consider finite height and
commensuration separately.

First suppose that finitely many conjugates of a finitely generated
subgroup $P\leq\Gamma$ have finite total intersection.  Then the finiteness
conditions $|P\backslash c\mathcal R_S(T)|<\infty$ for these conjugates
force $\mathcal R_S(T)$ to be finite.  This is the content of the following
proposition, whose proof is given in
Subsection~\ref{subsec:finite-height-period-subgroups}.

\begin{proposition}
\label{prop:intro-finite-conjugate-core}
Let $\Gamma$ be finitely generated, let $P\leq\Gamma$ be finitely generated,
and suppose that there are $c_1,\ldots,c_m\in\Gamma$ such that
\[
 \bigcap_{i=1}^m c_i^{-1}Pc_i
 \quad\text{is finite}.
\]
If $|\mathcal A|\geq2$, then the coset full shift
$\mathcal A^{P\backslash\Gamma}$ has Cayley-tree POTP if and only if
$\Gamma$ is virtually free.
\end{proposition}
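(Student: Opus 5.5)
The plan is to reduce the proposition to Theorem~\ref{thm:intro-main-replacement}\textup{(b)} and Theorem~\ref{thm:intro-main-universal}. The coset full shift $X=\mathcal A^{P\backslash\Gamma}$ is an SFT: it is cut out by the finitely many local rules $x(sg)=x(g)$, where $s$ ranges over a finite generating set of $P$. Since $|\mathcal A|\geq2$, we also have $P_\ell(X)=P$. So for any $S$ and any spanning tree $T\subseteq C_S$, Theorem~\ref{thm:intro-main-replacement}\textup{(b)} says that $X$ has $T$-POTP if and only if
\[
|P\backslash c\mathcal R_S(T)|<\infty\qquad\text{for all }c\in\Gamma.
\]

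\emph{If direction.} Suppose $\Gamma$ is virtually free. By Theorem~\ref{thm:intro-main-universal} there are $S$ and $T$ with $\operatorname{str}_S(T)<\infty$. Every element $agv^{-1}$ with $v\in[g,ag]_T$ carries $v$ to $ag$, and $d_T(v,ag)\le\operatorname{str}_S(T)$. Hence $agv^{-1}$ is a product of at most $\operatorname{str}_S(T)$ generators from $S$. So $\mathcal R_S(T)$ lies in a finite ball, and the coset condition above holds trivially. (Alternatively, one could invoke (i)$\Rightarrow$(iv) of Theorem~\ref{thm:intro-main-universal}, since SFTs have ordinary POTP.)

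\emph{Only if direction.} Suppose $X$ has $T$-POTP for some $S,T$. The goal is to show that $\mathcal R:=\mathcal R_S(T)$ is finite, which then gives bounded stretch and hence virtual freeness by Theorem~\ref{thm:intro-main-universal}. Apply the criterion with $c=c_i$ for $i=1,\dots,m$. Each $c_i\mathcal R$ meets only finitely many right cosets of $P$. Equivalently, $\mathcal R$ meets only finitely many right cosets of $Q_i:=c_i^{-1}Pc_i$, because
\[
Pc_ir=Pc_ir' \iff Q_ir=Q_ir'.
\]
Now define a map from $\mathcal R$ to tuples of cosets by $r\mapsto(Q_1r,\dots,Q_mr)$. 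Its image is finite. Two elements $r,r'$ with the same image satisfy $r'r^{-1}\in\bigcap_iQ_i$, which is a finite group $F$. So each fibre has size at most $|F|$, and therefore $\mathcal R$ is finite.

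Finally, finiteness of $\mathcal R$ implies bounded stretch. For each edge $\{g,ag\}$, take $v=g$ as the initial endpoint of the tree path; then the element $agg^{-1}=a$ gives nothing. Instead, consider the vertices $v$ along $[g,ag]_T$. Each element $agv^{-1}$ lies in the finite set $\mathcal R$. Distinct vertices $v$ on the path give distinct elements $agv^{-1}$, since right multiplication is injective. So the path length $d_T(g,ag)$ is at most $|\mathcal R|$, uniformly in $g$ and $a$. This gives $\operatorname{str}_S(T)<\infty$, and Theorem~\ref{thm:intro-main-universal} (ii)$\Rightarrow$(i) yields virtual freeness.

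The main point needing care is the conversion of the coset finiteness into the conjugate statement, together with the fibre-counting argument. Both are elementary, so I expect no serious obstacle, provided that Theorem~\ref{thm:intro-main-replacement}\textup{(b)} and the identification $P_\ell(X)=P$ are taken as given.
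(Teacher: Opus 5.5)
Your proposal is correct and takes essentially the same route as the paper. You reduce to the coset criterion $|P\backslash c_i\mathcal R_S(T)|<\infty$ and transfer it to finitely many right cosets of each conjugate $c_i^{-1}Pc_i$. You then get finiteness of $\mathcal R_S(T)$ from finiteness of $\bigcap_i c_i^{-1}Pc_i$; your fibre count matches the paper's observation that each nonempty $\bigcap_i (c_i^{-1}Pc_i)r_i$ is a single coset $Kx$. Finally, injectivity of $v\mapsto qv^{-1}$ gives bounded stretch and hence virtual freeness. For the converse the paper invokes (i)$\Rightarrow$(iv) of Theorem~\ref{thm:intro-main-universal}, which is the alternative you mention, and your direct finite-ball argument is equally valid.
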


A subgroup $P$ has \emph{finite height} if there is $n$ such that, whenever
$Pg_1,\ldots,Pg_{n+1}$ are distinct right cosets, the intersection
$\bigcap_i g_i^{-1}Pg_i$ is finite.  Hence
Proposition~\ref{prop:intro-finite-conjugate-core} applies, in particular,
to infinite-index finite-height subgroups; see
Subsection~\ref{subsec:finite-height-period-subgroups} for the precise use
of this condition.
Gitik--Mitra--Rips--Sageev proved that quasiconvex subgroups of hyperbolic
groups have finite width, the stronger condition obtained by requiring a
uniform bound for families with pairwise infinite intersections.  Finite
width implies finite height
\cite[Main theorem]{GitikMitraRipsSageev}.  We therefore obtain the following
concrete application.

\begin{corollary}
\label{cor:hyperbolic-quasiconvex-period}
Let $\Gamma$ be word-hyperbolic, let $P\leq\Gamma$ be an infinite-index
quasiconvex subgroup, and let $|\mathcal A|\geq2$.  Then the coset full
shift $\mathcal A^{P\backslash\Gamma}$ has ordinary POTP, and it has
Cayley-tree POTP if and only if $\Gamma$ is virtually free.
\end{corollary}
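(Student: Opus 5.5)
The corollary follows by combining the Chung--Lee characterization with Proposition~\ref{prop:intro-finite-conjugate-core}. The only real input is the finite-height statement of Gitik--Mitra--Rips--Sageev.

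\emph{Ordinary POTP.} The coset full shift $\mathcal A^{P\backslash\Gamma}$ is cut out of $\mathcal A^\Gamma$ by the constraints $x(pg)=x(g)$. We make this an SFT by choosing a finite generating set $\{p_1,\ldots,p_k\}$ of $P$; this is possible because quasiconvex subgroups of hyperbolic groups are finitely generated. The shift is then defined by the finitely many forbidden patterns on the supports $\{e,p_i\}$ with $x(p_i)\neq x(e)$. Imposing $x(p_ih)=x(h)$ for all $h$ and all generators yields invariance under all of $P$, since the relation is closed under composition and inverses. The shift is nonempty, since constant configurations lie in it. Hence it is a nonempty SFT, and Chung--Lee \cite[Theorem~3.2]{ChungLee} gives ordinary POTP.

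\emph{Cayley-tree POTP.} By \cite[Main theorem]{GitikMitraRipsSageev}, $P$ has finite width and hence finite height, say with bound $n$. Since $[\Gamma:P]=\infty$, we can pick $n+1$ distinct right cosets $Pc_1,\ldots,Pc_{n+1}$. The definition of finite height then makes $\bigcap_{i=1}^{n+1}c_i^{-1}Pc_i$ finite. All hypotheses of Proposition~\ref{prop:intro-finite-conjugate-core} now hold: $\Gamma$ is finitely generated, $P$ is finitely generated, some finite family of conjugates has finite intersection, and $|\mathcal A|\ge 2$. The proposition yields the equivalence with virtual freeness of $\Gamma$.

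The only step needing care is the passage from finite width to finite height together with the existence of $n+1$ distinct cosets. Both are immediate: finite width bounds any family of distinct cosets with pairwise infinite conjugate intersections, so a family of more than $n$ distinct cosets cannot have all intersections infinite, and in particular cannot have infinite total intersection. Infinite index supplies the cosets.
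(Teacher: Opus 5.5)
Your proposal is correct and follows the paper's proof: finite generation of $P$ makes the coset full shift a nonempty SFT, so Chung--Lee gives ordinary POTP; Gitik--Mitra--Rips--Sageev finite width gives finite height; and infinite index supplies $n+1$ distinct right cosets whose conjugates have finite intersection, so Proposition~\ref{prop:intro-finite-conjugate-core} yields the equivalence. Your explicit forbidden patterns on $\{e,p_i\}$ and your width-to-height deduction just spell out steps the paper states in one line.
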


Thus this coset SFT has ordinary POTP throughout the stated class, while
Cayley-tree POTP holds exactly in the virtually free case.

We next consider commensurated subgroups.  Two subgroups are
\emph{commensurable} when their
intersection has finite index in both, and $P$ is \emph{commensurated} if
$P$ and $gPg^{-1}$ are commensurable for every $g\in\Gamma$.  Its orbital coset
graph $\mathcal O_S(\Gamma,P)$ has vertex set $P\backslash\Gamma$ and
edges $Pg\sim Pag$ for $a\in S$; it is locally finite in this case.  The
reduced Schlichting completion $\Gamma//P$ is the closure of the
permutation image of $\Gamma$ on $P\backslash\Gamma$, in the topology of
pointwise convergence.  This construction is made precise in
Subsection~\ref{subsec:prelim-Schlichting}, in particular in
Theorem~\ref{thm:background-Schlichting}.  A quasi-tree is a graph
quasi-isometric to a tree; the metric conventions are recalled in
Subsection~\ref{subsec:prelim-quasitrees}.

\begin{thma}
\label{thm:intro-main-commensurated}
Let $\Gamma$ be finitely generated, let $P\leq\Gamma$ be finitely generated
and commensurated, and let $|\mathcal A|\geq2$.  The following are
equivalent.
\begin{enumerate}[label=\textup{(\roman*)}]
\item $\mathcal A^{P\backslash\Gamma}$ has Cayley-tree POTP;
\item for one, and hence for every, finite generating set $S$, the orbital
graph $\mathcal O_S(\Gamma,P)$ is a quasi-tree;
\item $\Gamma//P$ acts continuously, properly, and cocompactly on a locally
finite tree;
\item $\Gamma$ is the fundamental group of a finite graph of groups whose
vertex and edge groups, viewed as subgroups of $\Gamma$ through the
Bass--Serre embeddings, are commensurable with $P$.
\end{enumerate}
\end{thma}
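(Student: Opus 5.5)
The plan is to prove the cycle (i)$\Leftrightarrow$(ii)$\Leftrightarrow$(iii)$\Leftrightarrow$(iv). The dynamical part is reduced to geometry through Theorem~\ref{thm:intro-main-replacement}(b). The remaining equivalences are relative versions of the Antol\'{\i}n and Stallings--Dunwoody picture, phrased through the Schlichting completion.

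\textbf{Step 1 (reduction of (i)).} The coset full shift is an SFT, since it is cut out by the finitely many equations $x(pg)=x(g)$ for $p$ in a finite generating set of $P$. For $|\mathcal A|\ge2$ its common left-period subgroup is $P$. By Theorem~\ref{thm:intro-main-replacement}(b), (i) therefore holds iff there are $S$ and $T$ with $|P\backslash c\mathcal R_S(T)|<\infty$ for all $c\in\Gamma$. Since $P$ is commensurated, I will show that the finiteness for one $c$ implies it for every $c$. Indeed, $P\backslash cE$ is finite iff $E$ meets finitely many right cosets of $c^{-1}Pc$, and this property is unchanged under passing to a commensurable subgroup, because a finite-index subgroup splits each coset into finitely many. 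So (i) says exactly that $\mathcal R_S(T)$ lies in finitely many right cosets of $P$. Equivalently, $\{\sigma_r\}_{r\in\mathcal R_S(T)}$ has bounded image in the locally compact group $\Gamma//P$, i.e.\ it lies in a compact subset. Geometrically this means every tree detour $[g,ag]_T$ has uniformly bounded ``relative stretch'' when measured in the orbital graph $\mathcal O_S(\Gamma,P)$. I will call such a $T$ a \emph{relatively uniform} spanning tree.

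\textbf{Step 2 ((ii)$\Leftrightarrow$(iii)$\Leftrightarrow$(iv)).} By Theorem~\ref{thm:background-Schlichting}, $G=\Gamma//P$ is totally disconnected and locally compact, the closure $\overline P$ of $P$ is compact open, and $\mathcal O_S(\Gamma,P)\cong G/\overline P$ is a Cayley--Abels graph for $G$. Being a quasi-tree is a quasi-isometry invariant, and any two Cayley--Abels graphs are quasi-isometric, which gives ``one, hence every'' in (ii). The equivalence of (ii) and (iii) is the t.d.l.c.\ analogue of Stallings--Dunwoody: a compactly generated t.d.l.c.\ group has a quasi-tree Cayley--Abels graph iff it acts properly and cocompactly on a locally finite tree (Kr\"on--M\"oller). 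For (iii)$\Rightarrow$(iv), restrict the action to the dense subgroup $\Gamma$. The quotient graph is finite because $\Gamma$ is dense and the vertex stabilizers are open. Each vertex or edge stabilizer $K$ is compact open, hence commensurable with $\overline P$, so $K\cap\Gamma$ is commensurable with $\overline P\cap\Gamma=P$. Bass--Serre theory then gives (iv). For (iv)$\Rightarrow$(iii), let $\Gamma$ act on the Bass--Serre tree. This tree is locally finite because each edge group has finite index in its adjacent vertex groups, being commensurable with $P$ through them. The stabilizers are commensurable with $P$, so the action is continuous for the topology that $\Gamma$ inherits from $G$. It therefore extends to $G$, where it is proper and cocompact.

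\textbf{Step 3 ((i)$\Leftrightarrow$(iii)); this is the main obstacle.} For (i)$\Rightarrow$(ii), project a relatively uniform $T$ to $\mathcal O_S$. Each edge of $\mathcal O_S$ lifts to a Cayley edge whose tree detour has bounded image. I would then verify Manning's bottleneck criterion by pushing tree geodesics down to $\mathcal O_S$. For (iii)$\Rightarrow$(i), I have to build a relatively uniform spanning tree, following Antol\'{\i}n's construction but relative to $P$. The ingredients are: enlarge $S$ to contain generators of the finitely many vertex groups $\Gamma_v$ and the Bass--Serre edge elements; take a spanning tree of $C_S$ on each coset $g\Gamma_v$; and connect these along the Bass--Serre tree. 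Detours inside a vertex coset stay inside some $g\Gamma_v$. Since $\Gamma_v$ is commensurable with $P$, such detours project to a bounded set in $\mathcal O_S$. Detours crossing edges follow the Bass--Serre geodesic, whose length is bounded. The delicate point is making these choices uniform, so that finitely many representatives per orbit suffice while the Cayley tree spans all of $\Gamma$. I would carry out this construction on $G$-orbit representatives of vertices and edges, and transport it equivariantly.
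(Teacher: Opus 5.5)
\textbf{Step~2, (iv)$\Rightarrow$(iii).} This step fails as argued. A subgroup commensurable with $P$ need not be open in the topology that $\Gamma$ inherits from $\Gamma//P$. The open subgroups there are exactly those containing some $\bigcap_{f\in F}f^{-1}Pf$ with $F$ finite, and a finite-index subgroup of $P$ need not contain one. So an action with stabilizers commensurable with $P$ need not be continuous for this topology, and need not extend.

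For example, take $\Gamma=\langle a\rangle\times\langle t\rangle\cong\mathbb Z^2$ and $P=\langle a\rangle$. Since $P$ is normal, $\Gamma//P=\Gamma/P\cong\mathbb Z$ and $\iota(a)=e$. Let $\Gamma$ act on the line with two leaves attached at each vertex: $t$ translates, and $a$ fixes the line and swaps the two leaves at every vertex. The vertex and edge stabilizers are $\langle a\rangle$ and $\langle a^2\rangle$, and the quotient is finite, so (iv) holds. Yet $a$ acts nontrivially, so this action cannot factor through $\Gamma//P$. Condition (iii) is true here, but not for your reason.

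The paper avoids the extension problem by proving (iv)$\Rightarrow$(ii) instead. Lemma~\ref{lem:orbit-map-comparison} treats a cocompact action on a connected locally finite graph with a vertex stabilizer $H$ commensurable with $P$. It shows that $Hg\mapsto g^{-1}o$ is a quasi-isometry from the orbital graph of $(\Gamma,H)$, and passes from $H$ to $P$ through $P\cap H$. Then (ii)$\Rightarrow$(iii) is the Carette/Kr\"on--M\"oller step you already cite (Proposition~\ref{thm:background-lc-quasitree}).

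\textbf{Step~3, (iii)$\Rightarrow$(i).} Your construction uses Bass--Serre data, so it could in principle serve as a direct (iv)$\Rightarrow$(i). As written, however, it does not produce a spanning tree. With several vertex groups, the cosets of $\Gamma_{v_1}$ and those of $\Gamma_{v_2}$ each partition $\Gamma$. The fibre trees therefore overlap, and their union has cycles. In the left Cayley graph $C_S$, the fibres joined by edges labelled in $\Gamma_v$ are also right cosets $\Gamma_v g$, not $g\Gamma_v$. If you keep only the cosets of one vertex group, Bass--Serre geodesics between them pass through vertices of other types, so no tree on these fibres is induced.

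The missing idea is that Antol\'{\i}n's theorem (Theorem~\ref{thm:background-Antolin}) holds for arbitrary connected graphs. Apply it to $\mathcal O_S(\Gamma,P)$ itself. Under (ii) this gives an abstract uniform tree $U$ on $P\backslash\Gamma$, whose fibres do partition $\Gamma$. Lemma~\ref{lem:uniform-coset-tree-lifting} then lifts $U$ by taking right translates of one Cayley spanning tree of $P$ in the fibres, plus one Cayley edge $\{x,dx\}$ with $d$ in a finite set $D$ for each $U$-edge. It controls detours via $qw^{-1}\in Pd_1P\cdots Pd_mP$ with $m$ bounded, which is a finite union of right $P$-cosets by commensuration. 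This gives (ii)$\Rightarrow$(i) with no Bass--Serre data.

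\textbf{Step~3, (i)$\Rightarrow$(ii).} Your Manning-bottleneck route is a workable alternative to the paper's tree decomposition plus Berger--Seymour. You must first enlarge $S$ by generators of $P$; the same $T$ keeps $T$-POTP, so Step~1 applies to the new $S$. Only then can an orbital path be lifted to a $C_S$-path between fixed lifts $\tilde x,\tilde y$ of its endpoints. That lifting is what makes $[\tilde x,\tilde y]_T$ project into a uniform neighbourhood of every such path.
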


For the tree action in \textup{(iii)}, properness means that vertex
stabilizers are compact, while cocompactness means that the quotient graph
is finite.  These notions and their relation to Cayley--Abels graphs and
Bass--Serre theory are explained in
Subsection~\ref{subsec:prelim-Schlichting}.  Conditions \textup{(ii)--(iv)} belong
to the established rough-Cayley-graph and Bass--Serre theory of
commensurated pairs, building on Kr\"on--M\"oller and Carette
\cite{KronMollerTopological,Carette}.  The contribution of
Theorem~\ref{thm:intro-main-commensurated} is the equivalence of the dynamical
condition \textup{(i)} and the geometric condition \textup{(ii)}.  This
equivalence yields a finite relative tree decomposition, rather than only
the existence of one splitting.  Without the commensurated hypothesis, the
quasi-isometry type of one orbital graph does not determine Cayley-tree
POTP: for $P=\mathbb Z^2$ in
$\Gamma=P*\mathbb Z$ it is a tree after loops are removed, but the coset
full shift has no Cayley-tree POTP.

Under the finite-intersection hypothesis,
Theorem~\ref{thm:intro-main-replacement}\textup{(b)} forces bounded
stretch and hence virtual freeness.  Under commensuration, the same criterion
is equivalent to the quasi-tree condition for the orbital graph.

\subsection{Cohomological dimension and generic Cantor actions}

Theorem~\ref{thm:intro-main-commensurated} gives the following
cohomological characterization.
Following Margolis, a group is of type VFP over $\mathbb Z$ if some
finite-index subgroup admits a finite-length resolution of the trivial
module by finitely generated projective modules.  Its virtual
cohomological dimension $\operatorname{vcd}$ is the cohomological dimension
of such a finite-index subgroup; cohomological dimension is the least
possible length of a projective resolution of the trivial module.  These
definitions and the precise results
of Margolis used below are stated in Section~\ref{sec:applications}.

\begin{corollary}
\label{cor:vcd-tree-POTP}
Let $\Gamma$ and $P$ be of type VFP over $\mathbb Z$, with
$P\leq\Gamma$ commensurated and $[\Gamma:P]=\infty$.  For
$|\mathcal A|\geq2$, the shift $\mathcal A^{P\backslash\Gamma}$ has
ordinary POTP, and it has Cayley-tree POTP if and only if
$\operatorname{vcd}(\Gamma)=\operatorname{vcd}(P)+1$.
\end{corollary}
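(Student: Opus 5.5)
The plan is to treat the two assertions separately. Ordinary POTP is immediate. The coset full shift $\mathcal A^{P\backslash\Gamma}$ is cut out of $\mathcal A^\Gamma$ by the conditions $x(pg)=x(g)$ for $p\in P$. Since $P$ is of type VFP, it is finitely generated, say by $p_1,\dots,p_k$. Then these conditions are equivalent to the finitely many local rules $x(p_ig)=x(g)$, so the shift is an SFT. It is nonempty, because constant configurations belong to it. The theorem of Chung and Lee \cite[Theorem~3.2]{ChungLee} then gives ordinary POTP.

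For the Cayley-tree assertion I will invoke Theorem~\ref{thm:intro-main-commensurated}. Its hypotheses hold: $\Gamma$ and $P$ are finitely generated (type VFP includes type FP over a finite-index subgroup, hence finite generation), $P$ is commensurated, and $|\mathcal A|\geq2$. So Cayley-tree POTP is equivalent to condition (ii), namely that the orbital graph $\mathcal O_S(\Gamma,P)$ is a quasi-tree. It is also equivalent to condition (iv), that $\Gamma$ splits as a finite graph of groups whose vertex and edge groups are commensurable with $P$. The remaining task is to match (ii) or (iv) with the equation $\operatorname{vcd}(\Gamma)=\operatorname{vcd}(P)+1$.

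This is where Margolis's results, as stated in Section~\ref{sec:applications}, come in. I expect them to provide two facts for a commensurated VFP pair $P\leq\Gamma$ of infinite index:
\begin{enumerate}[label=\textup{(\arabic*)}]
\item $\operatorname{vcd}(\Gamma)\geq\operatorname{vcd}(P)+1$, so the codimension is at least one;
\item equality holds exactly when the relative geometry is of dimension one, that is, exactly when $\Gamma$ is quasi-isometric to a tree relative to the cosets of $P$.
\end{enumerate}
Concretely, (2) says $\mathcal O_S(\Gamma,P)$ is a quasi-tree, or equivalently $\Gamma$ acts on a tree with stabilizers commensurable with $P$, as in (iv).

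For the direction from (iv) to the codimension-one equation, I can also argue directly. A finite graph of groups with vertex and edge groups commensurable with $P$ gives a Mayer--Vietoris argument on finite-index torsion-free subgroups, which bounds $\operatorname{vcd}(\Gamma)\leq\operatorname{vcd}(P)+1$. The lower bound comes from infinite index, via Margolis's coarse Poincaré-duality-type inequality. The converse direction, from codimension one to a quasi-tree, is the deep part, and I will cite Margolis's theorem directly rather than reprove it.

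The main obstacle is aligning hypotheses. Margolis's theorem is likely phrased with the quasi-tree condition on the coarse quotient $P\backslash\Gamma$, or with the Schlichting completion acting on a tree, and it may require $\Gamma$ and $P$ of type VFP rather than merely FP. I therefore need to check that his notion of "relative quasi-tree" coincides with condition (ii) or (iii) of Theorem~\ref{thm:intro-main-commensurated}. I expect this identification to rest on the orbital graph being quasi-isometric to a Cayley--Abels graph of $\Gamma//P$, as recalled in Subsection~\ref{subsec:prelim-Schlichting}. That choice lets me connect via (iii), which is the formulation I expect Margolis to use.
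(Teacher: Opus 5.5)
Your proposal is correct and follows the paper's route. Ordinary POTP comes from finite generation of $P$ and Chung--Lee. For one direction you use Margolis's codimension-one theorem together with (iv)$\Rightarrow$(i) of Theorem~\ref{thm:intro-main-commensurated}. For the converse you get the bound $\operatorname{vcd}(\Gamma)\leq\operatorname{vcd}(P)+1$ from a torsion-free finite-index subgroup acting on the Bass--Serre tree; the paper cites Brown's tree inequality, which is your Mayer--Vietoris bound. Two small corrections: the lower bound needs no separate ``coarse Poincar\'e-duality inequality,'' because monotonicity gives $\operatorname{vcd}(P)\leq\operatorname{vcd}(\Gamma)$ and Margolis's finite-index criterion (Theorem~\ref{thm:background-Margolis}(b)) excludes equality when $[\Gamma:P]=\infty$. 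Also, Margolis's splitting theorem is already stated in the form (iv), so no alignment through (iii) is needed.
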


Margolis proved that the equality on the right yields the finite
graph-of-groups decomposition in
Theorem~\ref{thm:intro-main-commensurated}\textup{(iv)}, and characterized the equal-dimension
case by finite index \cite[Theorem~1.2 and Proposition~1.3]{Margolis}.
Corollary~\ref{cor:vcd-tree-POTP} states that this codimension-one equality
is equivalent to Cayley-tree POTP of the coset full shift.  Ordinary POTP
does not make this distinction: it holds for every coset shift in the corollary because $P$ is
finitely generated.  The statement applies to nonnormal commensurated
subgroups and therefore does not reduce to a quotient-group formulation.

The second consequence answers an explicit question in generic Cantor
dynamics.  Let $\mathfrak C$ be the Cantor space and
$\operatorname{Act}_\Gamma(\mathfrak C)$ the space of continuous
$\Gamma$-actions, endowed with the Polish topology of pointwise
convergence in $\operatorname{Homeo}(\mathfrak C)$.  A subset is
\emph{comeager} if its complement is a countable union of nowhere dense
sets.  A countable group has the strong topological Rokhlin property (STRP)
if this action space contains an action with comeager conjugacy class.
Doucha asked whether every virtually cyclic group has STRP and,
more broadly, whether STRP is stable under commensurability
\cite[Section~7, immediately before and in Question~7.2]{DouchaSTRP}.

\begin{corollary}
\label{cor:generic-tree-virtually-free}
Every finitely generated virtually free group has STRP.  For every such
$\Gamma$, Cayley-tree POTP is comeager in
$\operatorname{Act}_\Gamma(\mathfrak C)$.  In particular, every virtually
cyclic group has STRP.
\end{corollary}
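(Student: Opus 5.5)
The plan has three steps: prove that STRP passes from a finite-index subgroup to the ambient group, apply this to a free subgroup, and then show that Cayley-tree POTP is comeager.

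\emph{Step 1: STRP for finitely generated free groups.} Let $\Gamma$ be finitely generated virtually free, and fix a finite-index free subgroup $F\le\Gamma$. Since $F$ has finite index, it is finitely generated. For finitely generated free groups $F_n$, STRP is known: the generic action is built from projectively isolated finite actions via Fraïssé-type arguments (Kechris--Rosendal for $\Z$, Doucha for free groups). I will cite this. For $\Gamma$ itself I will prove the abstract result announced in the abstract: if $F\le\Gamma$ has finite index and $F$ has STRP, then so does $\Gamma$.

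\emph{Step 2: passing STRP to a finite-index overgroup.} The natural route is the Kechris--Rosendal type criterion used by Doucha. A countable group has STRP if and only if the class of finite $\Gamma$-actions on finite sets, or more precisely the corresponding category of clopen-partition ``approximations'', has the joint embedding property and the weak amalgamation property. So I will verify JEP and WAP for $\Gamma$ from those for $F$, and I would try induction and restriction first. Given a finite $\Gamma$-approximation $A$, restrict it to $F$, apply WAP in $F$ to obtain a witness $B$, and then induce $B$ back to $\Gamma$ via $\Gamma\times_F B$. The resulting $\Gamma$-set maps onto the original through the natural $\Gamma$-map $\Gamma\times_F A\to A$. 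The key point is that induced actions of the form $\Gamma\times_F(\cdot)$ are cofinal among $\Gamma$-approximations: any $\Gamma$-set $A$ is a $\Gamma$-quotient of $\Gamma\times_F \mathrm{Res}\,A$. This lets us dominate every $\Gamma$-approximation by an induced one, and then transfer amalgamation diagrams through the adjunction between restriction and induction. This step is the main obstacle. Amalgamation must be arranged compatibly with the finite-index quotient maps, and the approximations are partial actions on finite sets for generators rather than genuine actions. So I expect to need a careful definition of ``finite $\Gamma$-approximation'' for which induction is well-behaved. If that fails, a fallback is to use a finite-index normal free subgroup $N$ and form the semidirect twisted actions directly on the Cantor space.

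\emph{Step 3: Cayley-tree POTP is comeager.} Once STRP holds, comeagerness of Cayley-tree POTP follows once the generic action has it. For $\Gamma$ virtually free, Theorem~\ref{thm:intro-main-universal}\textup{(iv)} reduces this to showing that the generic action has ordinary POTP. The standard fact here is that in $\operatorname{Act}_\Gamma(\mathfrak C)$ for finitely presented $\Gamma$, actions that are inverse limits of SFTs with the relevant tracing form a comeager set; the result of Bernardes--Darji type for $\Z$ extends to finitely presented groups via Chung--Lee's SFT characterization. Virtually free groups are finitely presented. I would show that the comeager conjugacy class from Step 1 is a generic action conjugate to a projective limit of SFTs with POTP, whose bonding maps make it have ordinary POTP.

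Finally, virtually cyclic groups are finitely generated virtually free, which gives the last assertion.
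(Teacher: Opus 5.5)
\textbf{There is a genuine gap in Step~2, which carries all of the new content.} Step~2 is Theorem~\ref{thm:finite-index-STRP}. What you give there is a plan whose key step is left open, not an argument.

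You never fix the class of approximations, and the candidates you name do not support the construction. Clopen partitions of a Cantor $\Gamma$-space produce subshift factors, not finite $\Gamma$-sets. Finite partial actions of the generators carry no $F$-action, so $\Gamma\times_F(\cdot)$ is not defined on them. A Kechris--Rosendal JEP/WAP criterion for a group with relations would itself have to be formulated and proved.

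More seriously, cofinality of induced objects is not the difficulty. Weak amalgamation, like projective isolation, is a statement about \emph{every} object near a given one. A $\Gamma$-system close to an induced system has no reason to be induced, so you need a rigidity mechanism that forces it, and your sketch has none.

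The paper works directly with Doucha's criterion, Theorem~\ref{thm:background-Doucha-STRP}\textup{(a)}. Write $G$ for your $\Gamma$ and $H$ for your $F$.
\begin{itemize}
\item It induces an $H$-subshift $Z$ to $I_{\mathcal C}(Z)\subseteq(\mathcal C\times Q)^G$, with $Q=G/H$, adding the coset-label coordinate $\omega_q(k)=kq$.
\item The $(\{e\}\cup S)$-patterns of $\Omega=\{\omega_q:q\in Q\}$ force $\theta(su)=s\theta(u)$, hence $\theta(k)=k\theta(e)$.
\item Consequently, for a suitable finite $M$, every subshift $L$ with $L_M=I_{\mathcal C}(Z)_M$ has coset-label projection exactly $\Omega$. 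Its fibre over $\omega_{q_0}$ then restricts to an $H$-subshift in $\mathcal N_Z^E$.
\item This rigidity is what transfers projective isolation from $Y$ to $I_{\mathcal B}(Y)$.
\item Density then comes from three ingredients: the recoding $\Psi:\mathcal A^G\to(\mathcal A^R)^H$; a projectively isolated $H$-approximation $Y$ of $\Psi(X)$; and the block map $\Theta$, which exhibits $U=\bigcup_q\sigma_{r_q}\Psi^{-1}(Y)$ as a factor of $I_{\mathcal B}(Y)$. Here $U$ agrees with $X$ on the prescribed finite window, and Lemma~\ref{lem:background-projective-isolation} finishes.
\end{itemize}
Your observation that every $\Gamma$-object is a quotient of an induced one corresponds only to this final factor step.

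\textbf{Step~3 rests on a claim that is not available.} You say generic POTP holds for all finitely presented groups. If that were a standard fact, Theorem~\ref{thm:background-Doucha-STRP}\textup{(c)} would give STRP to every finitely presented group, including every virtually cyclic one. Doucha's question, which this corollary answers, would then not have been open.

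Your proposed route through projective limits of SFTs is not carried out, and it would amount to re-proving the implication from STRP to generic POTP. The correct step is short: once STRP is established, apply Theorem~\ref{thm:background-Doucha-STRP}\textup{(c)} to get generic ordinary POTP. Then Theorem~\ref{thm:intro-main-universal}\textup{(iv)} shows that, for virtually free $\Gamma$, the POTP and Cayley-tree POTP actions coincide.

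Step~1 is fine as a citation; when $\Gamma$ is finite, the trivial group (which has STRP trivially) covers it.
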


The finite-index statement used here is
Theorem~\ref{thm:finite-index-STRP}: STRP passes from a finite-index
subgroup to a finitely generated overgroup.  This theorem proves the
finite-index-overgroup direction of Doucha's permanence question and answers
the explicitly stated virtually cyclic case.  The downward direction needed
for full commensurability invariance remains open.  Doucha's
characterization of STRP by generic ordinary POTP, combined with
Theorem~\ref{thm:intro-main-universal}, then gives the generic
Cayley-tree conclusion.  Thus, for virtually free groups, generic ordinary
POTP is equivalent to generic Cayley-tree POTP.

\subsection*{Organization}
Section~\ref{sec:background} groups the required graph-theoretic,
dynamical, symbolic, and subgroup-pair preliminaries, with the external
results stated before use.  Section~\ref{sec:supports} develops supported
pseudo-orbits and the replacement-path equicontinuity theorem.
Section~\ref{sec:shifts} gives a unified proof of both parts of
Theorem~\ref{thm:intro-main-replacement} and proves
Theorem~\ref{thm:intro-main-universal}.  Section~\ref{sec:main-classification}
provides the proof of Proposition~\ref{prop:intro-finite-conjugate-core},
treats finite-height and commensurated period subgroups, and proves
Theorem~\ref{thm:intro-main-commensurated} and
Corollary~\ref{cor:hyperbolic-quasiconvex-period}.
Section~\ref{sec:applications} proves the cohomological corollary.
Section~\ref{sec:generic-STRP} proves finite-index permanence of STRP and
Corollary~\ref{cor:generic-tree-virtually-free}.

\section{Preliminaries}
\label{sec:background}

Throughout, graphs are undirected and carry their path metrics, $e$ is the identity of
$\Gamma$, and $\mathbb N=\{0,1,2,\ldots\}$.

\subsection{Cayley graphs, spanning trees, and stretch}

 Let $\Gamma$ be a finitely generated group and let $S=S^{-1}$ be a
finite generating set.  We use the left Cayley graph
\[
 C_S=\operatorname{Cay}(\Gamma,S),
 \qquad E(C_S)=\bigl\{\{g,ag\}:g\in\Gamma,\ a\in S\bigr\}.
\]
The choice of a left Cayley graph matches the indices $g$ and $ag$ in the
pseudo-orbit relation.

For a graph $\mathcal G$, write $V(\mathcal G)$ and $E(\mathcal G)$ for its
vertex and edge sets and $d_{\mathcal G}$ for its path metric.  A spanning
tree $T\subseteq\mathcal G$ is a connected acyclic subgraph with
$V(T)=V(\mathcal G)$.  For $u,v\in V(T)$, the notation $[u,v]_T$ denotes the
unique simple path, also viewed as its vertex set.  If
$\{u,v\}\in E(\mathcal G)\setminus E(T)$, then
$[u,v]_T\cup\{\{u,v\}\}$ is the fundamental cycle of the deleted edge, and
$[u,v]_T$ is its \emph{replacement path}.

A spanning tree $T\subseteq\mathcal G$ is a \emph{tree $t$-spanner} if
\[
 d_T(u,v)\leq t\,d_{\mathcal G}(u,v)
 \qquad(u,v\in V(\mathcal G)).
\]
For an unweighted graph it is enough to check this on its edges.  In a
Cayley graph we therefore put
\[
 \operatorname{str}_S(T)
 :=\sup_{a\in S,\,g\in\Gamma}d_T(g,ag).
\]
Thus $T$ is a tree spanner exactly when $\operatorname{str}_S(T)<\infty$;
we also call such a tree \emph{bounded-stretch}.  This is a tree contained in
the prescribed Cayley graph, and the comparison is made by the identity map
on $\Gamma$ \cite{CaiCorneil}.

\subsection{Quasi-trees and tree models}
\label{subsec:prelim-quasitrees}

Quasi-isometry is weaker than bilipschitz equivalence.  Recall that
$f:(Y,d_Y)\to(Z,d_Z)$ is a quasi-isometry if there are $\lambda\geq1$ and
$C\geq0$ such that
\[
 \lambda^{-1}d_Y(y,y')-C
 \leq d_Z(f(y),f(y'))
 \leq \lambda d_Y(y,y')+C
\]
for all $y,y'\in Y$, and every point of $Z$ lies at uniformly bounded
distance from $f(Y)$.  A graph quasi-isometric to a tree is a
\emph{quasi-tree}.  Changing the finite generating set changes a Cayley graph
but not its quasi-isometry class.

An \emph{abstract uniform tree} for a connected graph $\mathcal G$ is a tree
$U$ together with a bijection $V(U)\to V(\mathcal G)$ which is Lipschitz in
both directions.  Its edges need not be edges of $\mathcal G$.  The precise
external result used below is the following.

\begin{theorem}
\cite[Theorem~4.7, \textup{(B1)--(B2)}]{Antolin}
\label{thm:background-Antolin}
For a connected graph $\mathcal G$, the following are equivalent:
\begin{enumerate}[label=(\roman*)]
\item $\mathcal G$ is quasi-isometric to a tree;
\item $\mathcal G$ admits an abstract uniform tree.
\end{enumerate}
If $\mathcal G$ is a Cayley graph of a finitely generated group $\Gamma$,
these conditions are also equivalent to $\Gamma$ being virtually free, that
is, containing a free subgroup of finite index.
\end{theorem}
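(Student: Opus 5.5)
The implication (ii)$\Rightarrow$(i) is immediate. A bijection $V(U)\to V(\mathcal G)$ that is Lipschitz in both directions is a quasi-isometry with additive constant $0$. It is surjective on vertices, and every point of an edge lies within distance $1/2$ of a vertex.

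For (i)$\Rightarrow$(ii) I would use a Manning-type bottleneck argument, which I expect to be the main obstacle.

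\emph{Step 1 (fibers).} Fix a base vertex $o$ and a large constant $R$, to be chosen in terms of the quasi-isometry constants. For $n\geq0$, consider the annuli $A_n=\{v:nR\leq d_{\mathcal G}(o,v)<(n+1)R\}$. Let $\mathcal F$ be the set of connected components of $\mathcal G\setminus B(o,nR)$ intersected with $A_n$, taken over all $n$. Declare $F\in\mathcal F$ at level $n+1$ adjacent to the unique component at level $n$ through which it is reached. This gives a rooted tree $\mathcal T$ whose vertices are the fibers $F\in\mathcal F$.

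\emph{Step 2 (bounded fibers).} The key point is that every fiber has $d_{\mathcal G}$-diameter at most some constant $D$. I would prove this by pushing a fiber to the model tree through the quasi-isometry. Two points of one fiber are joined by a path avoiding $B(o,nR)$. In a tree, geodesics from $o$ to the two images diverge at a point far from both only if every connecting path comes close to that branch point. Pulling back, the connecting path would have to enter $B(o,nR)$ once $R$ exceeds the quasi-isometry constants, which is a contradiction.

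\emph{Step 3 (the tree $U$).} Inside each fiber, pick a centre and join every other vertex of the fiber to it, giving a star of diameter at most $2$. For each tree edge $F'F$ of $\mathcal T$, add one edge from the centre of $F$ to the centre of $F'$. The result is a tree $U$ on the vertex set $V(\mathcal G)$; no local finiteness is needed.

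\emph{Step 4 (bi-Lipschitz).} Every edge of $U$ joins vertices that are within bounded $d_{\mathcal G}$-distance, namely at most about $2D+2R$. Conversely, a $\mathcal G$-edge joins vertices lying in the same fiber or in $\mathcal T$-adjacent fibers, so their $U$-distance is at most $5$. Since both metrics are path metrics, Lipschitz bounds on edges give Lipschitz bounds everywhere.

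For the Cayley-graph statement, the plan runs in two directions.

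\emph{Virtually free $\Rightarrow$ quasi-tree.} If $\Gamma$ is virtually free, it is the fundamental group of a finite graph of finite groups (Karrass--Pietrowski--Solitar). It therefore acts properly and cocompactly on its locally finite Bass--Serre tree, and \v{S}varc--Milnor shows that $C_S$ is quasi-isometric to that tree.

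\emph{Quasi-tree $\Rightarrow$ virtually free.} If $C_S$ is a quasi-tree, then $\Gamma$ is hyperbolic, hence finitely presented, hence accessible by Dunwoody. So $\Gamma$ splits as a finite graph of groups with finite edge groups and vertex groups having at most one end. Each vertex group is quasi-isometrically embedded in $\Gamma$, so it is itself a finitely generated quasi-tree. An infinite quasi-tree with at most one end is impossible, because it would be quasi-isometric to a one-ended tree, which is a ray, while a group is never quasi-isometric to a ray. Hence every vertex group is finite, and $\Gamma$ is virtually free.
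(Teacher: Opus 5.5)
The paper gives no proof of this statement: it is quoted from Antol\'{\i}n \cite[Theorem~4.7]{Antolin}. So your proposal has to stand on its own, and most of it does.
\begin{itemize}
\item (ii)$\Rightarrow$(i) is immediate.
\item The level-set construction in Steps~1--4 is the standard Manning-type argument. It produces a tree on $V(\mathcal G)$ whose identity map is bi-Lipschitz.
\item ``Virtually free $\Rightarrow$ quasi-tree'' is correct. You can even skip Karrass--Pietrowski--Solitar: a finite-index free subgroup is finitely generated, its Cayley graph for a free basis is a tree, and the inclusion is a quasi-isometry.
\end{itemize}

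\textbf{The gap.} The error is in the last step of ``quasi-tree $\Rightarrow$ virtually free''. A one-ended tree need not be quasi-isometric to a ray. Take the ray with vertices $0,1,2,\ldots$ and attach a path of length $n$ at vertex $n$. This comb is locally finite and one-ended. Yet the tip of the $n$-th tooth and the spine vertices $0$ and $2n$ span a tripod with three legs of length $n$. A ray contains no such tripods, and quasi-isometries between trees coarsely preserve leg lengths. Hence ``a group is never quasi-isometric to a ray'' does not exclude an infinite one-ended vertex group, and you need a different argument.

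\textbf{A fix.} An infinite finitely generated group $H$ contains a bi-infinite geodesic $\gamma$ with $\gamma(0)=e$, obtained as a diagonal limit of translated geodesic segments. Let $f\colon H\to\mathcal T_0$ be a quasi-isometry to a tree, and let $c$ be the projection of $f(e)$ to the tree geodesic $[f(\gamma(-m)),f(\gamma(m))]$.
\begin{enumerate}
\item The coarse paths $f(\gamma|_{[-m,0]})$ and $f(\gamma|_{[0,m]})$ both pass near $c$. Since $\gamma$ is geodesic, this bounds $d(f(e),c)$ independently of $m$.
\item The image of any path from $\gamma(-m)$ to $\gamma(m)$ also passes near $c$. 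Hence every such path meets a fixed ball $B(e,r)$.
\item The two halves of $\gamma$ therefore lie in different unbounded components of $H\setminus B(e,r)$. So $H$ has at least two ends, which contradicts one-endedness.
\end{enumerate}

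\textbf{Two smaller points.}

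First, you use two facts without justification; they are true but should be justified or cited:
\begin{itemize}
\item vertex groups of a splitting over finite edge groups are undistorted;
\item an undistorted finitely generated subgroup of a quasi-tree is a quasi-tree, because its image is coarsely dense in its convex hull in the tree.
\end{itemize}

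Second, in Step~2 the mechanism is not ``$R$ exceeding the quasi-isometry constants''. Let $b$ be the branch point of $f(o),f(x),f(y)$.
\begin{itemize}
\item Pull $b$ back to a vertex $p$ of a $\mathcal G$-geodesic $[o,x]$ with $f(p)$ near $b$.
\item The connecting path outside $B(o,nR)$ also passes near $b$, so it comes within some $K=K(\lambda,C)$ of $p$.
\item Hence $d(o,p)\geq nR-K$, and since $p\in[o,x]$, $d(p,x)<R+K$.
\end{itemize}
This bounds $d(b,f(x))$, and symmetrically $d(b,f(y))$, by a constant of order $\lambda(R+K)$. So $D$ depends on $R$, which is harmless, and no ``far from both'' case distinction is needed. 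The comparison must be made in $\mathcal G$ along $[o,x]$. If you compare $d(f(o),b)$ with $d(f(o),f(x))$ inside the tree instead, the multiplicative constant leaves an error of order $(\lambda-\lambda^{-1})nR$, which is not uniform in $n$.
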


We also use a second tree model.  A \emph{tree decomposition} of
$\mathcal G$ is a tree $D$ with
bags $(\mathcal B_t)_{t\in V(D)}$, $\mathcal B_t\subseteq V(\mathcal G)$,
such that every edge of $\mathcal G$ has both endpoints in some bag and, for
each $v\in V(\mathcal G)$, the set
$\{t:v\in\mathcal B_t\}$ induces a connected subtree of $D$.  Its
\emph{outer diameter} is
$\sup_t\operatorname{diam}_{\mathcal G}(\mathcal B_t)$.

\begin{theorem}
\cite[Theorem~1.8]{BergerSeymour}
\label{thm:background-Berger-Seymour}
A connected graph is quasi-isometric to a tree if and only if it admits a
tree decomposition of finite outer diameter.
\end{theorem}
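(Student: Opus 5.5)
The plan is to pass through Manning's bottleneck criterion. A connected graph $\mathcal G$ is quasi-isometric to a tree if and only if there is $\Delta\geq0$ with the following property: for any vertices $x,y$ and any midpoint $m$ of a geodesic $[x,y]$, every path from $x$ to $y$ meets the ball $B(m,\Delta)$. I will assume this criterion and prove both directions of the statement against it.

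\emph{Finite outer diameter implies a quasi-tree.} Let $(D,(\mathcal B_t))$ be a tree decomposition with $\operatorname{diam}_{\mathcal G}\mathcal B_t\leq R$ for every node $t$.
\begin{enumerate}[label=\textup{(\arabic*)}]
\item For each vertex $v$, choose a node $t(v)$ with $v\in\mathcal B_{t(v)}$.
\item For any path $v_0,\dots,v_n$ in $\mathcal G$, consecutive vertices lie in a common bag. Since each set $\{t: v_i\in\mathcal B_t\}$ is a subtree, the union of these subtrees is a connected subtree of $D$ containing $t(v_0)$ and $t(v_n)$.
\item Consequently the path meets every bag $\mathcal B_s$ with $s$ on the $D$-geodesic $[t(v_0),t(v_n)]$.
\item Fix $x,y$, a geodesic $\gamma$ from $x$ to $y$, and its midpoint $m$. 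Let $s$ be the projection of $t(m)$ onto the $D$-geodesic $[t(x),t(y)]$. Then $s$ lies on both $[t(x),t(m)]$ and $[t(m),t(y)]$.
\item Applying step (3) to the subpaths $\gamma[x,m]$ and $\gamma[m,y]$, each meets $\mathcal B_s$, say at points $p$ and $q$.
\item Since $\gamma$ is geodesic, $d(p,q)\leq R$, and $m$ lies on $\gamma$ between $p$ and $q$. Hence $d(m,\mathcal B_s)\leq R$.
\item By step (3), any $x$--$y$ path meets $\mathcal B_s$, so it passes within $2R$ of $m$. Thus the bottleneck criterion holds with $\Delta=2R$.
\end{enumerate}

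\emph{A quasi-tree has a decomposition of finite outer diameter.} Assume the bottleneck property with constant $\Delta$. Fix a base vertex $o$ and a scale $L\gg\Delta$.
\begin{enumerate}[label=\textup{(\arabic*)}]
\item Nodes of $D$ are the root together with all pairs $(k,C)$, where $k\geq1$ and $C$ is a connected component of $\mathcal G\setminus B(o,kL)$.
\item The parent of $(k,C)$ is the unique $(k-1,C')$ with $C\subseteq C'$. The root is the parent of all level-one nodes and has bag $B(o,L+1)$.
\item The bag of $(k,C)$ is $\{v\in C: d(o,v)\leq (k+1)L+1\}$, together with the neighbours of $C$ in the sphere of radius $kL$.
\item Every edge lies in a bag. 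The endpoints have distances to $o$ differing by at most one, so the overlap built into the bags catches each edge.
\item The nodes whose bag contains a given $v$ form at most a vertex and its parent. These are adjacent, so the connectivity axiom holds.
\end{enumerate}

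The main obstacle is the diameter bound for the bags, and the argument I would try is as follows. Let $u,w$ lie in the bag of $(k,C)$. They are joined by a path inside $C$ together with its boundary, so this path avoids $B(o,kL-1)$.
\begin{enumerate}[label=\textup{(\arabic*)}]
\item Join $u$ and $w$ to $o$ by geodesics and apply the bottleneck property to the geodesic triangle they span. The aim is to show that the two geodesics fellow-travel up to radius roughly $kL-2\Delta$.
\item If $d(u,w)$ were larger than about $2L+10\Delta$, then a midpoint of $[u,w]$ would sit deep inside $B(o,kL)$ or far from it. The path inside $C$ avoids a $\Delta$-ball around that midpoint, which is a contradiction.
\item This yields $\operatorname{diam}\leq 4L+O(\Delta)$ uniformly in $(k,C)$.
\end{enumerate}
In carrying this out I expect to spend care on the thin-triangle consequences of the bottleneck property; the tree-decomposition axioms themselves are routine.
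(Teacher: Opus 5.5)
The paper gives no proof of this statement. It is quoted from Berger--Seymour and invoked once, in the implication (i)$\Rightarrow$(ii) of Theorem~\ref{thm:intro-main-commensurated}, and only in the direction ``finite outer diameter $\Rightarrow$ quasi-tree''. Your argument is therefore a genuinely different, essentially self-contained route: it replaces the Berger--Seymour citation with Manning's bottleneck criterion, using its hard direction in your first half and its easy direction in the second. The citation costs the paper nothing in length. Your route shows that the equivalence is a short reformulation of the bottleneck property, with explicit constants.

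Your first half is complete and correct, and it is exactly the half the paper uses. It proves the ``every midpoint'' form, which implies Manning's ``some midpoint'' form. For your second half, the ``every midpoint'' form follows from Manning's at the cost of replacing $\Delta$ by $3\Delta$, since any other geodesic is itself a path through $B(m,\Delta)$. Rounding midpoints of odd-length geodesics to vertices only adds constants.

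In the second half the construction is sound, but two points need repair.

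\begin{enumerate}
\item The nodes whose bags contain $v$ need not be a vertex and its parent. If $d(o,v)=kL$, then $v$ may be adjacent to several components $C_1,C_2,\dots$ of $\mathcal G\setminus B(o,kL)$, and it lies in the bags of $(k-1,C')$ and of every $(k,C_i)$. Each $C_i\cup\{v\}$ is connected and misses $B(o,(k-1)L)$, so every $C_i\subseteq C'$. These nodes therefore form a star, and the subtree axiom still holds.
\item The diameter bound is only sketched, and the dichotomy ``deep inside $B(o,kL)$ or far from it'' is misleading: the second alternative gives no contradiction and in fact cannot occur. The bound follows from two applications of the bottleneck property at the same point.
\end{enumerate}

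For the bound, let $u,w$ lie in the bag of $(k,C)$, put $a=d(o,u)$, $b=d(o,w)$, $c=d(u,w)$, and let $m$ be the midpoint of a geodesic $[u,w]$.

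The path $[u,o]\cup[o,w]$ meets $B(m,\Delta)$ at some $z$; say $z\in[o,u]$, the other case being symmetric. Since $d(z,u)\geq c/2-\Delta$,
\[
 d(o,m)\leq d(o,z)+\Delta=a-d(z,u)+\Delta\leq\max(a,b)-c/2+2\Delta .
\]
The path from $u$ to $w$ through $C$, with at most one boundary edge at each end, stays in $\{v:d(o,v)\geq kL\}$. Hence $d(o,m)\geq kL-\Delta$.

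Since $\max(a,b)\leq(k+1)L+1$, these two estimates give $c\leq 2L+2+6\Delta$, uniformly in $(k,C)$. The root bag has diameter at most $2L+2$. No fellow-travelling lemma is needed, and neither is the condition $L\gg\Delta$.
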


Consequently, a bounded-stretch Cayley spanning tree always gives a
quasi-tree, whereas the converse first produces only an abstract tree.  In
the proof of Theorem~\ref{thm:intro-main-universal}, the generating set is
enlarged by a finite ball so that every edge of the abstract tree becomes a
Cayley edge.

\subsection{POTP for group actions}
\label{subsec:prelim-actions-POTP}

Let $\Gamma$ be finitely generated, let $S=S^{-1}$ be a finite generating
set, and let $\Gamma$ act by homeomorphisms on a compact metric space
$(X,d)$.  Write $gx$ for the image of $x$ under $g$.  A
$\delta$-pseudo-orbit with respect to $S$ is a family
$(x_g)_{g\in\Gamma}$ satisfying
\[
 d(ax_g,x_{ag})<\delta
 \qquad(a\in S,\ g\in\Gamma).
\]
It is $\varepsilon$-traced by $x\in X$ if
$d(gx,x_g)<\varepsilon$ for every $g\in\Gamma$.  The action has
\emph{ordinary POTP} if, for every $\varepsilon>0$, there is $\delta>0$ for
which every $\delta$-pseudo-orbit is $\varepsilon$-traced.

On a compact space the property is independent of the compatible metric,
because any two compatible metrics are uniformly equivalent.  The following
external result justifies suppressing the generating set from the name.

\begin{theorem}
\cite[Proposition~1]{OsipovTikhomirov}
\label{thm:background-generating-set-invariance}
If $S_1$ and $S_2$ are finite symmetric generating sets of $\Gamma$, then a
compact $\Gamma$-action has POTP with respect to $S_1$ if and only if it has
POTP with respect to $S_2$.
\end{theorem}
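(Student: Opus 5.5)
The plan is to show directly that a pseudo-orbit for one generating set is a pseudo-orbit for the other, with a controlled loss in $\delta$. The $\varepsilon$-tracing condition $d(gx,x_g)<\varepsilon$ for all $g\in\Gamma$ does not refer to the generating set. So once this is done, POTP transfers, and by symmetry it suffices to prove one direction.

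Assume the action has POTP with respect to $S_1$, and fix $\varepsilon>0$. Let $\delta_1>0$ be the constant supplied by POTP for $S_1$. Since $S_2$ generates $\Gamma$, every $b\in S_1$ can be written as a word $b=a_k\cdots a_1$ with $a_j\in S_2$ and $k\leq L$, where $L$ is a uniform bound over the finite set $S_1$. Let $W$ be the finite set of group elements represented by words in $S_2$ of length less than $L$, including $e$. Each $w\in W$ acts by a homeomorphism of the compact space $X$, hence uniformly continuously. Since $W$ is finite, I choose $\delta_2>0$ such that
\[
 d(y,z)<\delta_2\ \Longrightarrow\ d(wy,wz)<\delta_1/L
 \qquad(w\in W,\ y,z\in X).
\]

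Now let $(x_g)$ be a $\delta_2$-pseudo-orbit with respect to $S_2$, and fix $g\in\Gamma$ and $b=a_k\cdots a_1\in S_1$. Put $g_0=g$ and $g_j=a_jg_{j-1}$, so that $g_k=bg$. Telescoping along the word gives
\[
 d(bx_g,x_{bg})\leq\sum_{j=1}^{k}
 d\bigl(a_k\cdots a_{j+1}\,a_jx_{g_{j-1}},\ a_k\cdots a_{j+1}\,x_{g_j}\bigr).
\]
In the $j$-th term, the two points $a_jx_{g_{j-1}}$ and $x_{g_j}=x_{a_jg_{j-1}}$ are within $\delta_2$ by the $S_2$ pseudo-orbit condition. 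The prefix $a_k\cdots a_{j+1}$ lies in $W$, so the choice of $\delta_2$ bounds each term by $\delta_1/L$. Hence the sum is less than $k\delta_1/L\leq\delta_1$, so $(x_g)$ is a $\delta_1$-pseudo-orbit with respect to $S_1$. By the choice of $\delta_1$ it is therefore $\varepsilon$-traced, which proves POTP with respect to $S_2$.

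The only step needing care is the choice of $\delta_2$. It has to be made uniformly over all base points $g$, which is possible only because the finitely many maps in $W$ are uniformly continuous on the compact space $X$. I expect no further obstacle.
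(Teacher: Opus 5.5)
Your proof is correct: the telescoping estimate along each $S_2$-word for $b\in S_1$, with one uniform modulus for the finitely many prefixes in $W$, is exactly what is needed. The paper only cites this result from Osipov--Tikhomirov rather than proving it, but your argument is essentially the mechanism of its own Theorem~\ref{thm:support-comparison}, with your telescoping sum serving as an explicit form of Lemma~\ref{lem:finite-propagation}.
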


See also \cite[Definitions~2.5--2.7]{ChungLee} for the formulation used
here.

A compact space is \emph{zero-dimensional} if it has a basis of clopen
sets.  A \emph{Cantor space} is a nonempty compact metrizable,
zero-dimensional space without isolated points.  A family
$\mathcal F\subseteq C(X,X)$ is \emph{equicontinuous} if, for every
$\varepsilon>0$, there is $\eta>0$ such that
\[
 d(x,y)<\eta\quad\Longrightarrow\quad
 d(fx,fy)<\varepsilon\quad(f\in\mathcal F).
\]
An action is equicontinuous when its entire family of action maps is
equicontinuous.  Its \emph{action kernel} is the normal subgroup of elements
which act trivially on every point of $X$.

Ordinary POTP uses all edges of $C_S$; supported versions are developed in
Section~\ref{sec:supports}.

\subsection{Subshifts and left-period subgroups}
\label{subsec:prelim-symbolic-periods}

Let $\mathcal A$ be a finite alphabet, always given the discrete topology.
The full shift $\mathcal A^\Gamma=\{x:\Gamma\to\mathcal A\}$ has the product
topology and the right-shift action
\[
 (\sigma_gx)(h)=x(hg)
 \qquad(g,h\in\Gamma,\ x\in\mathcal A^\Gamma).
\]
A \emph{subshift} is a closed shift-invariant subset
$X\subseteq\mathcal A^\Gamma$.  It is a \emph{subshift of finite type}
(SFT) if there are a finite window $D\subseteq\Gamma$ and a set of allowed
patterns $\mathcal W\subseteq\mathcal A^D$ such that
\[
 X=\{x\in\mathcal A^\Gamma:(\sigma_gx)|_D\in\mathcal W
                     \text{ for every }g\in\Gamma\}.
\]
\begin{theorem}
\cite[Theorem~3.2]{ChungLee}
\label{thm:CL-SFT}
Let $\Gamma$ be finitely generated and $\mathcal A$ finite.  A nonempty
subshift $X\subseteq\mathcal A^\Gamma$ has ordinary POTP if and only if it
is of finite type.
\end{theorem}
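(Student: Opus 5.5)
Since this is the Chung--Lee characterization, I would prove both directions directly, using the compatible metric on $\mathcal A^\Gamma$ in which $d(x,y)<2^{-n}$ exactly when $x$ and $y$ agree on the word-metric ball $B_n=B_S(e,n)$. POTP does not depend on the compatible metric, so this choice costs nothing. With this metric, a $\delta$-pseudo-orbit with $\delta=2^{-N}$ means the following: for every $g\in\Gamma$ and $a\in S$, the configurations $\sigma_a x_g$ and $x_{ag}$ agree on $B_N$. Written out, this is $x_g(ha)=x_{ag}(h)$ for $|h|\le N$. Tracing to within $\varepsilon=2^{-n}$ means that $\sigma_g x$ and $x_g$ agree on $B_n$ for every $g$.

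\emph{SFT $\Rightarrow$ POTP.} Let $X$ be defined by a window $D$ and allowed patterns $\mathcal W$. Given $n$, I choose $N\ge n$ so large that $D\subseteq B_N$. Take a $2^{-N}$-pseudo-orbit $(x_g)$ in $X$ and define $x(g):=x_g(e)$. The key step is an induction on word length. For $h=a_k\cdots a_1$ with $k\le N$, I chain the relations along the prefixes $a_1$, $a_2a_1$, $\ldots$, shrinking the radius by one at each step. This should give $x_g(h)=x_{hg}(e)=x(hg)$, that is, $(\sigma_gx)|_{B_{N'}}=x_g|_{B_{N'}}$ for some $N'$ comparable to $N$. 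Only the radius loss needs care here. I expect to need roughly $N\ge 2N'$, so I would take $N$ larger still, say $N\ge 2\max(n,\text{radius of }D)$. Then $(\sigma_gx)|_D=x_g|_D\in\mathcal W$ for every $g$, so $x\in X$. The same agreement on $B_n$ gives $\varepsilon$-tracing.

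\emph{POTP $\Rightarrow$ SFT.} Apply POTP with $\varepsilon=1$, which only requires agreement at $e$. This yields some $\delta=2^{-N}$. Let $X_M$ be the SFT with window $B_M$ and allowed set $\{x|_{B_M}:x\in X\}$, where $M=N+1$. Clearly $X\subseteq X_M$, so I aim to prove $X_M\subseteq X$. Given $y\in X_M$, for each $g$ I choose $x_g\in X$ with $x_g|_{B_M}=(\sigma_gy)|_{B_M}$. For $|h|\le N$ we have $|ha|\le M$, hence
\[
x_g(ha)=y(hag)=x_{ag}(h).
\]
So $(x_g)$ is a $\delta$-pseudo-orbit in $X$. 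POTP then gives $x\in X$ with $(\sigma_gx)(e)=x_g(e)=y(g)$ for all $g$. Thus $y=x\in X$, and $X=X_M$ is of finite type.

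The main obstacle is the radius bookkeeping in the first direction. Showing that the pseudo-orbit equations, which hold only on $B_N$, propagate consistently along all words of length at most $N'$ requires the induction to lose only a controlled amount of radius. Once that is set up, both directions are routine. Nonemptiness of $X$ is used only so that the notions are nonvacuous.
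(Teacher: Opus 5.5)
Your proposal is correct. The paper does not prove this statement; it imports it from \cite[Theorem~3.2]{ChungLee}. Your argument is essentially the standard direct proof used there, which is the group version of Walters' $\mathbb Z$ argument. For SFT $\Rightarrow$ POTP you read off $x(g)=x_g(e)$, and for the converse you turn a point of the SFT $X_{N+1}$ into a pseudo-orbit in $X$.

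The radius loss you worry about does not actually occur. Write $h=a_k\cdots a_1$ with $k\le N+1$. The $i$-th step applies the relation at the vertex $a_{i-1}\cdots a_1g$ to the remaining word $a_k\cdots a_{i+1}$, whose length is at most $k-i\le N$. Hence $x_g(h)=x_{hg}(e)$ for all $|h|\le N+1$, and choosing $N\ge n$ with $D\subseteq B_N$ already suffices.
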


The full shift has no forbidden patterns and is therefore an SFT.

For a nonempty subshift $X\subseteq\mathcal A^\Gamma$, define its
\emph{common left-period subgroup} by
\begin{equation}
 P_\ell(X)=\{p\in\Gamma:x(ph)=x(h)
       \text{ for every }x\in X\text{ and }h\in\Gamma\}.
\label{eq:left-period-group}
\end{equation}
The adjective ``left'' refers to the multiplication $h\mapsto ph$ in this
identity.  The equivalence classes $P_\ell(X)g$ are right cosets.

\begin{lemma}
\label{lem:left-period-subgroup}
Put $P=P_\ell(X)$ and, for $g\in\Gamma$, let
$\kappa_g:X\to\mathcal A$ be the coordinate observable
$\kappa_g(x)=x(g)$.  Then $P\leq\Gamma$, and
\begin{equation}
 \kappa_g=\kappa_h
 \quad\text{if and only if}\quad Pg=Ph.
\label{eq:coordinate-observable-cosets}
\end{equation}
Moreover,
\begin{equation}
 \ker(\Gamma\curvearrowright X)
 =\operatorname{core}_\Gamma(P)
 :=\bigcap_{h\in\Gamma}h^{-1}Ph.
\label{eq:kernel-period-core}
\end{equation}
Thus $P_\ell(X)$ equals the action kernel if and only if it is normal.
\end{lemma}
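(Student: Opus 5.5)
We verify the three assertions directly from the definition \eqref{eq:left-period-group}. Throughout, $P=P_\ell(X)$.

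\emph{$P$ is a subgroup.} Clearly $e\in P$. Let $p,q\in P$, $x\in X$ and $h\in\Gamma$. Applying the defining identity for $p$ at the point $qh$, and then the one for $q$ at $h$, gives
\[
x(pqh)=x(qh)=x(h).
\]
For inverses, apply the identity for $p$ at the point $p^{-1}h$:
\[
x(h)=x(p\,p^{-1}h)=x(p^{-1}h).
\]
So $pq\in P$ and $p^{-1}\in P$.

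\emph{Proof of \eqref{eq:coordinate-observable-cosets}.} Suppose first that $Pg=Ph$, so $g=ph$ with $p\in P$. Then $\kappa_g(x)=x(ph)=x(h)=\kappa_h(x)$ for every $x\in X$.

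Conversely, suppose $\kappa_g=\kappa_h$, meaning $x(g)=x(h)$ for all $x\in X$. We want to show $gh^{-1}\in P$, which gives $Pg=Ph$. The point is to use shift invariance of $X$. For $k\in\Gamma$ and $x\in X$ we have $\sigma_kx\in X$, so
\[
x(gk)=(\sigma_kx)(g)=(\sigma_kx)(h)=x(hk).
\]
Given any $h'\in\Gamma$, take $k=h^{-1}h'$. This yields $x(gh^{-1}h')=x(h')$ for all $h'$, that is, $gh^{-1}\in P$.

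This converse is the only step with real content: it turns equality of two coordinate functions at fixed points into a left-period statement by sliding along the right shift.

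\emph{Proof of \eqref{eq:kernel-period-core}.} An element $g$ acts trivially on $X$ exactly when $x(hg)=x(h)$ for all $x\in X$ and $h\in\Gamma$. By \eqref{eq:coordinate-observable-cosets}, this holds if and only if $Phg=Ph$ for every $h$, that is, $hgh^{-1}\in P$ for every $h$. Equivalently, $g\in h^{-1}Ph$ for every $h$, which says $g\in\operatorname{core}_\Gamma(P)$.

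\emph{Final assertion.} The core of a subgroup is the largest normal subgroup contained in it. Hence $\operatorname{core}_\Gamma(P)=P$ if and only if $P$ is normal. By \eqref{eq:kernel-period-core}, this says that $P_\ell(X)$ equals the action kernel if and only if $P_\ell(X)$ is normal.
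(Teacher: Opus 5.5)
Your proposal is correct and follows the paper's proof essentially line for line. The subgroup check, the shift-invariance argument with $k=h^{-1}h'$ for the converse of \eqref{eq:coordinate-observable-cosets}, and the reduction of the kernel to $Phg=Ph$ for all $h$ are exactly the paper's steps; you also spell out the ``core is the largest normal subgroup'' step that the paper leaves implicit.
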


\begin{proof}
The identity belongs to $P$.  If $p,q\in P$, then
$x(pqh)=x(qh)=x(h)$ for every $x$ and $h$, so $pq\in P$.  Applying the
period identity for $p$ at $p^{-1}h$ gives
$x(h)=x(p^{-1}h)$, so $p^{-1}\in P$.

If $gh^{-1}\in P$, then $\kappa_g=\kappa_h$.  Conversely, if these
observables agree, applying their equality to $\sigma_{h^{-1}t}x$ gives
$x(gh^{-1}t)=x(t)$ for all $x$ and $t$, hence $gh^{-1}\in P$.  Finally,
$k\in\ker(\Gamma\curvearrowright X)$ precisely when
$\kappa_{hk}=\kappa_h$ for every $h\in\Gamma$.  By
\eqref{eq:coordinate-observable-cosets}, this holds precisely when
$Phk=Ph$ for every $h\in\Gamma$, which is equivalent to
$k\in\bigcap_{h\in\Gamma}h^{-1}Ph$.
\end{proof}

The subgroup $P_\ell(X)$ need not be normal.  For example, let
$\Gamma=S_3$, $P=\langle(12)\rangle$, and
$X=\{0,1\}^{P\backslash S_3}$.  Then $P_\ell(X)=P$, whereas the action
kernel is $\operatorname{core}_{S_3}(P)=\{e\}$.  Thus the distinct coordinate
observables are indexed by the right-coset space
$P_\ell(X)\backslash\Gamma$, which generally has no group structure.

For a subgroup $P\leq\Gamma$, the \emph{coset full shift}
$\mathcal A^{P\backslash\Gamma}$ is the set of maps
$P\backslash\Gamma\to\mathcal A$; equivalently, it is the subshift of
$\mathcal A^\Gamma$ consisting of the configurations satisfying
$x(pg)=x(g)$ for every $p\in P$ and $g\in\Gamma$.

\subsection{Coset spaces and orbital graphs}
\label{subsec:prelim-coset-graphs}

For $P\leq\Gamma$ and $E\subseteq\Gamma$, write
\[
 P\backslash E=\{Pg:g\in E\}\subseteq P\backslash\Gamma.
\]
This denotes the image of $E$ in a right-coset space; it is not a quotient
group and does not assert that $E$ is a subgroup or is $P$-invariant.  Two
subgroups are \emph{commensurable} if their intersection has finite index in
both.  The subgroup $P$ is \emph{commensurated} if $P$ and $gPg^{-1}$ are
commensurable for every $g\in\Gamma$; then $(\Gamma,P)$ is a Hecke pair.

The \emph{orbital coset graph} $\mathcal O_S(\Gamma,P)$ has the
edges $Pg\sim Pag$.  Here $\sim$ means adjacency.  The neighbors of $P$
contributed by $a\in S$ are the right $P$-cosets contained in $PaP$, and
their number is
\[
 [P:P\cap a^{-1}Pa].
\]
Consequently the commensurated hypothesis makes the orbital graph locally
finite.  Different finite generating sets give quasi-isometric orbital
graphs.  This is the graph carrying the right $\Gamma$-action and used in
the coarse classification.

\subsection{Schlichting completions and actions on trees}
\label{subsec:prelim-Schlichting}

Assume now that $P$ is commensurated.  The \emph{reduced Schlichting
completion} $\Gamma//P$ is the closure of the permutation image of $\Gamma$
acting on $P\backslash\Gamma$ by right multiplication, with the topology of
pointwise convergence.  Passing to the permutation image removes the normal
core of $P$.  The following statement combines the cited completion,
uniqueness, and coset-identification results.  The source uses left cosets;
applying the inversion bijection $Pg\mapsto g^{-1}P$ gives the right-coset
formulation below.

\begin{theorem}
\cite[Proposition~3.6, Theorem~3.8, and Proposition~3.9]
{KaliszewskiLandstadQuigg}
\label{thm:background-Schlichting}
Let $P$ be commensurated in $\Gamma$, put $G=\Gamma//P$, and let
$\iota:\Gamma\to G$ be the canonical dense homomorphism.  Then
$\overline P:=\overline{\iota(P)}$ is compact open and
$\iota^{-1}(\overline P)=P$.  The maps
\[
 Pg\longmapsto \overline P\,\iota(g),
 \qquad
 PgP\longmapsto \overline P\,\iota(g)\,\overline P
\]
identify, equivariantly, the discrete and completed coset and double-coset
spaces.  These data characterize the reduced completion up to topological
isomorphism.
\end{theorem}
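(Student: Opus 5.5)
The plan is to work directly with the permutation model. Let $G=\Gamma//P$ be the closure of the image of $\Gamma$ in $\operatorname{Sym}(P\backslash\Gamma)$, acting on the right, with the topology of pointwise convergence. In this topology the stabilizer of a point is an open subgroup. Put $K=\operatorname{Stab}_G(P)$. The first goal is to show $K=\overline P$.

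\emph{Preimage and density.} If $\iota(g)$ fixes the coset $P$, then $Pg=P$, so $g\in P$; hence $\iota^{-1}(K)=P$. If $s\in K$, write $s=\lim\iota(g_n)$. Since $K$ is open, $\iota(g_n)\in K$ for large $n$, so $g_n\in P$. Therefore $K\subseteq\overline{\iota(P)}$. The reverse inclusion holds because $K$ is closed and contains $\iota(P)$. Together these give $\overline P=K$ and $\iota^{-1}(\overline P)=P$.

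\emph{Compactness.} Here commensuration enters. $K$ preserves each $\overline{\iota(P)}$-orbit on $P\backslash\Gamma$, and these orbits are the closures of the $P$-orbits $P\backslash PgP$. Each such orbit has $[P:P\cap g^{-1}Pg]<\infty$ elements. Hence $K$ embeds as a closed subgroup of a product of finite symmetric groups, and it is compact by Tychonoff. Being a point stabilizer, it is also open.

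\emph{Coset identification.} Consider the map $Pg\mapsto\overline P\iota(g)$.
\begin{enumerate}
\item It is well defined and injective, by $\iota^{-1}(\overline P)=P$.
\item It is surjective: for $s\in G$, the set $\overline P s$ is an open neighbourhood of $s$, so by density it contains some $\iota(g)$, and then $\overline P s=\overline P\iota(g)$.
\item It is equivariant for right multiplication by construction.
\end{enumerate}
The double-coset statement follows by taking $\overline P$-orbits of both sides, again using $G=\overline P\,\iota(\Gamma)$.

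\emph{Uniqueness.} I expect this to be the main obstacle. Suppose $(G',\iota')$ is another reduced completion: $\iota'(\Gamma)$ is dense, $\overline{\iota'(P)}$ is compact open with preimage $P$, and the action on $\overline{\iota'(P)}\backslash G'$ is faithful. By the identification step applied to $G'$, we have $\overline{\iota'(P)}\backslash G'\cong P\backslash\Gamma$ equivariantly. This yields a continuous homomorphism $G'\to\operatorname{Sym}(P\backslash\Gamma)$ that extends $\iota$. It is injective because the action is faithful, and its image contains $\iota(\Gamma)$.

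The delicate point is to show that the image is closed and that the map is open. I would first prove this map is proper: the preimage of the compact open $K$ is the stabilizer in $G'$, which is the compact set $\overline{\iota'(P)}$. A continuous injective proper homomorphism between locally compact groups is a closed embedding. Hence the image is closed and contains a dense set, so it equals $G$, and the map is a topological isomorphism.
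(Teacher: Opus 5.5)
Your proposal is correct. The paper gives no proof of this statement: it imports it from Kaliszewski--Landstad--Quigg, where it is formulated with left cosets and transferred by the inversion $Pg\mapsto g^{-1}P$. Your argument is a self-contained verification directly in the right-coset permutation model, so no left/right translation is needed, and it shows where each hypothesis enters. Openness of point stabilizers gives $\overline P=\operatorname{Stab}_G(P)$ and $\iota^{-1}(\overline P)=P$. Commensuration makes the $\overline P$-orbits finite, hence $\overline P$ compact. Reducedness, i.e.\ faithfulness on $\overline{\iota'(P)}\backslash G'$, drives uniqueness. You were right to build that last condition into your formalization of ``these data'': for $\Gamma=P=\mathbb Z$, the $p$-adic integers $\mathbb Z_p$ with the inclusion satisfy all the other conditions, yet $\Gamma//P$ is trivial. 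The citation buys brevity and the full universal-property package of the source.

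A few steps should be made explicit.

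\emph{Double cosets.} $G=\overline P\,\iota(\Gamma)$ gives only surjectivity. Injectivity needs that the $\overline P$-orbits on $\overline P\backslash G$ coincide with the $\iota(P)$-orbits. This is your compactness-step observation that $K$-orbits on $P\backslash\Gamma$ equal $P$-orbits, transported through the identification. Its inverse is the orbit map $\overline P s\mapsto P\cdot s$, which also shows the identification is $G$-equivariant and not merely $\Gamma$-equivariant.

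\emph{Uniqueness.} Call your map $\rho:G'\to\operatorname{Sym}(P\backslash\Gamma)$. First, $\rho(G')\subseteq G$ follows from continuity and density of $\iota'(\Gamma)$. Second, compactness of $\rho^{-1}(K)$ needs one more line to give properness in the usual sense. Either cover compact subsets of $G$ by finitely many cosets $Kt$, each meeting the image, or argue directly. The restriction of $\rho$ to $\overline{\iota'(P)}$ is a continuous injection of a compact space into a Hausdorff one. Its image is closed, lies in $K$, and contains $\iota(P)$, so it equals $K=\overline{\iota(P)}$. Hence $\rho$ maps an open subgroup homeomorphically onto an open subgroup and is therefore open. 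Its image is then an open, hence closed, subgroup containing the dense set $\iota(\Gamma)$, so it is all of $G$.
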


A \emph{Cayley--Abels graph} of a totally disconnected locally compact group
$G$ is a connected locally finite graph on which $G$ acts continuously and
vertex-transitively with compact open vertex stabilizers.
Theorem~\ref{thm:background-Schlichting} identifies the vertices and shows
that the completion action is
vertex-transitive with compact open stabilizers.  Together with the orbital
edge definition, this makes $\mathcal O_S(\Gamma,P)$ a Cayley--Abels graph
for $\Gamma//P$; local finiteness comes from the Hecke condition, not
directly from Theorem~\ref{thm:background-Schlichting}.

We use the following locally compact Milnor--\v{S}varc lemma.

\begin{theorem}
\cite[Section~3.1]{Carette}
\label{thm:background-lc-Milnor-Svarc}
If a compactly generated locally compact group $G$ acts continuously,
properly, cocompactly, and isometrically on a proper geodesic metric space
$Y$, then $Y$ is quasi-isometric to $G$ with a compact word metric.
\end{theorem}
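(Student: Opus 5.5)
The plan is the classical Milnor--\v{S}varc argument, run through the orbit map, with compactness taking the place of finiteness. Fix a base point $y_0\in Y$ and consider $\phi:G\to Y$, $\phi(g)=gy_0$. By cocompactness there is a compact $K\subseteq Y$ with $GK=Y$. Enlarging $K$ to a closed ball $B(y_0,R)$, which is compact because $Y$ is proper, we get $G\cdot B(y_0,R)=Y$. This already shows that $\phi(G)$ is $R$-dense in $Y$, which is the quasi-surjectivity half of the quasi-isometry.

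Next put $U=\{g\in G:d(gy_0,y_0)\leq 2R+1\}$. Continuity of the action makes $g\mapsto gy_0$ continuous. Hence $U$ is closed and contains an open neighbourhood of $e$. Properness means that $\{g:gB\cap B\neq\emptyset\}$ is relatively compact for every compact $B$; applied to $B=B(y_0,2R+1)$, this makes $U$ compact. Now let $g\in G$ and take a geodesic from $y_0$ to $gy_0$. Choose points $y_0=z_0,z_1,\ldots,z_n=gy_0$ on it with consecutive distance at most $1$ and $n\leq d(y_0,gy_0)+1$. For each $i$ pick $h_i\in G$ with $d(h_iy_0,z_i)\leq R$, where $h_0=e$ and $h_n=g$. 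Then $d(h_iy_0,h_{i+1}y_0)\leq 2R+1$, so $h_i^{-1}h_{i+1}\in U$ by isometry. It follows that $U$ generates $G$ and that $|g|_U\leq d(y_0,gy_0)+1$. Conversely, the triangle inequality along a $U$-word gives $d(y_0,gy_0)\leq(2R+1)|g|_U$. Applying both bounds to $g^{-1}h$ and using left invariance, $\phi$ is a quasi-isometric embedding $(G,d_U)\to Y$, and together with the first paragraph a quasi-isometry.

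It remains to check that the conclusion does not depend on which compact generating set is used for the word metric. This is the step I expect to need the most care. Let $V$ be any compact generating set. Since $U$ contains an identity neighbourhood $W$, the increasing open sets $\bigcup_{k\le m} (U^{\pm})^kW$ cover $G$, so by compactness $V\subseteq (U^{\pm})^m$ for some $m$, giving $|\cdot|_U\leq m|\cdot|_V$. For the reverse inequality, note that compactly generated implies $V$ can be replaced by $V\cup W'$ for a compact identity neighbourhood $W'$ without changing the metric up to bilipschitz equivalence. The same covering argument applied to $U$ then gives the other bound. The two word metrics are therefore bilipschitz, and $Y$ is quasi-isometric to $G$ with any compact word metric.
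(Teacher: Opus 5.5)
The paper does not prove this statement; it quotes it from Carette, where it is the standard locally compact form of the Milnor--\v{S}varc lemma. Your first two paragraphs give a correct, self-contained proof by the usual orbit-map argument, and they already prove the statement as worded. The set $U=\{g:d(gy_0,y_0)\leq 2R+1\}$ is a compact symmetric generating set, so $(G,d_U)$ is ``$G$ with a compact word metric''. The orbit map is a $(2R+1,1)$-quasi-isometric embedding of $(G,d_U)$ into $Y$ with $R$-dense image. Compared with the bare citation, your route is self-contained, and it shows that the hypothesis ``$G$ compactly generated'' is redundant: compact generation comes out of the argument.

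Your last paragraph goes beyond what is asked, and one step in it is asserted rather than proved. The forward inequality is fine up to an index shift: $(U^{\pm})^kW\subseteq(U^{\pm})^{k+1}$, so you get $V\subseteq(U^{\pm})^{m+1}$. However, ``compactly generated implies $V$ can be replaced by $V\cup W'$'' is not a justification. What you need is that some power of $V$ contains an identity neighbourhood, and this comes from the Baire category theorem. Since $G=\bigcup_m(V^{\pm})^m$ and each $(V^{\pm})^m$ is compact, hence closed, some $(V^{\pm})^m$ has nonempty interior $O$. For $x\in O$, the set $Ox^{-1}\subseteq(V^{\pm})^{2m}$ is an open identity neighbourhood, and your covering argument then runs in the other direction.

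For the paper's use of the theorem, namely comparing $Y$ with a Cayley--Abels graph, even this is unnecessary. Both generating sets produced by your construction contain identity neighbourhoods, so your covering argument already gives the bilipschitz comparison in both directions.
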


Consequently every Cayley--Abels graph of $G$ is quasi-isometric to $Y$.
For an action on a locally finite graph, properness is equivalent to compact
vertex stabilizers, and cocompactness to finiteness of the quotient graph.

The non-elementary quasi-tree-to-tree passage used below is the following
result (the theorem cited by Carette is due to Cornulier).  A locally finite
tree is \emph{bushy} if there is $R\geq0$ such that every vertex is within
distance $R$ of a vertex whose removal leaves at least three unbounded
components.  Following Carette, a compactly generated locally compact group
is \emph{non-elementary} if it is neither compact nor two-ended.

\begin{theorem}
\cite[Theorem~3.3]{Carette}
\label{thm:background-lc-bushy-tree}
For a locally compact group $G$, the following are equivalent:
\begin{enumerate}[label=\textup{(\roman*)}]
\item $G$ is non-elementary and acts continuously, properly, and
cocompactly on a locally finite tree;
\item $G$ is the fundamental group of a finite graph of compact groups with
open edge monomorphisms;
\item $G$ is compactly generated and quasi-isometric to a locally finite
bushy tree.
\end{enumerate}
\end{theorem}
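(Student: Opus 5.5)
Since Theorem~\ref{thm:background-lc-bushy-tree} is quoted from Carette (after Cornulier), the plan is to reconstruct it from three ingredients: Bass--Serre theory for totally disconnected groups, the locally compact Milnor--\v{S}varc lemma (Theorem~\ref{thm:background-lc-Milnor-Svarc}), and a structure-tree construction for graphs with infinitely many ends. I would prove the cycle (i)$\Rightarrow$(ii)$\Rightarrow$(i) and the pair (i)$\Rightarrow$(iii)$\Rightarrow$(i).

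For (i)$\Rightarrow$(ii), let $G$ act on a locally finite tree $D$ continuously, properly and cocompactly. After passing to the barycentric subdivision we may assume there are no inversions.
\begin{itemize}
\item Properness makes vertex stabilizers compact, and continuity on a discrete vertex set makes them open.
\item An edge stabilizer is the intersection of the stabilizers of its two endpoints, so it is compact and open. Hence the edge monomorphisms are open.
\item The quotient graph $G\backslash D$ is finite by cocompactness.
\item The structure theorem of Bass--Serre theory, applied to the abstract group $G$, writes $G$ as the fundamental group of this finite graph of compact groups.
\end{itemize}

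For (ii)$\Rightarrow$(i), form the Bass--Serre tree of the graph of groups and let $G$ act on it.
\begin{itemize}
\item The tree is locally finite, because an open subgroup of a compact group has finite index.
\item Vertex stabilizers are conjugates of the compact open vertex groups, so the action is continuous and proper.
\item The quotient is the original finite graph, so the action is cocompact.
\end{itemize}
Non-elementarity is not automatic here. The statement must be read with that hypothesis carried along, or with degenerate graphs of groups excluded: those giving a compact group or an action on a line.

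For (i)$\Rightarrow$(iii), I would argue in three steps.
\begin{enumerate}
\item $G$ is compactly generated. Take a vertex $v$ and the finitely many edge orbits. Then $G$ is generated by the compact open group $G_v$ together with finitely many elements carrying $v$ to its neighbours up to $G_v$.
\item Replacing $D$ by its minimal invariant subtree, the action remains cocompact. Theorem~\ref{thm:background-lc-Milnor-Svarc} then gives that $G$ is quasi-isometric to $D$.
\item The minimal subtree is bushy. Since $G$ is not compact, $D$ is unbounded. Since $G$ is not two-ended, the minimal subtree is not a line. A cocompact minimal tree that is not a line has branch points of valence at least $3$ with unbounded complementary components in every $G$-orbit of vertices, within a uniform distance.
\end{enumerate}

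The substantive direction is (iii)$\Rightarrow$(i).
\begin{itemize}
\item Since $G$ is compactly generated, it has a Cayley--Abels graph $X$, obtained by taking a compact open subgroup $U$ and the coset graph $G/U$ with a compact generating set. This graph is quasi-isometric to the given bushy tree.
\item Bushiness gives $X$ infinitely many ends. It also rules out compactness and two-endedness, which is the non-elementary condition.
\item To produce an actual tree, I would use Dunwoody's tight-cut argument, or equivalently the Kr\"on--M\"oller structure trees for locally finite transitive graphs. One needs a $G$-invariant, nested family of cuts with finite boundary, falling into finitely many $G$-orbits, that separates ends.
\item The quasi-tree hypothesis supplies a uniform bound $k$ on the boundaries of the separating cuts. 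The family of all $k$-tight cuts is then nested after Dunwoody's optimisation, $G$-invariant, and locally finite.
\item The associated structure tree is locally finite. Stabilizers of its vertices are commensurable with stabilizers of finite sets of vertices of $X$, hence compact open, and $G$ has finitely many orbits on it.
\end{itemize}

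I expect the main obstacle to be this last step, specifically uniform finiteness. The vertices of the structure tree must correspond to bounded pieces of $X$. That is exactly where the quasi-isometry to a tree, rather than merely having infinitely many ends, has to be used. I would obtain it by comparing with a finite-outer-diameter tree decomposition from Theorem~\ref{thm:background-Berger-Seymour}.
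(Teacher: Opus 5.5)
The paper does not prove this statement; it quotes \cite{Carette}, so your sketch has to be judged on its own. Your arguments for (i)$\Leftrightarrow$(ii) and (i)$\Rightarrow$(iii) are sound. Your remark about (ii) is also correct: as quoted, (ii) lacks a non-elementarity clause. A compact group (one vertex, no edges) and $\mathbb Z$ (one vertex, one loop, trivial groups) both satisfy (ii) but neither (i) nor (iii).

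\textbf{The genuine gap} is the first step of (iii)$\Rightarrow$(i). The statement concerns an arbitrary locally compact group. Compact generation alone does not produce a compact open subgroup, and without one there is no Cayley--Abels graph. For example, $\mathbb R$ is compactly generated, but a continuous action of a connected group on a locally finite graph has open, hence full, vertex stabilizers. So $\mathbb R$ has no continuous proper action on any locally finite graph.

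Condition (i) itself forces compact open vertex stabilizers. Hence any proof of (iii)$\Rightarrow$(i) must first show that the identity component $G^\circ$ is compact; van Dantzig's theorem applied to $G/G^\circ$ then gives a compact open subgroup of $G$. The missing input is the theory of ends of locally compact groups due to Abels and Houghton: a compactly generated locally compact group with infinitely many ends has compact identity component. Any group quasi-isometric to a bushy tree has infinitely many ends, so this applies here.

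For totally disconnected $G$, van Dantzig alone suffices. That is the only case used in Proposition~\ref{thm:background-lc-quasitree}, so your route covers the paper's application but not the statement as written.

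\textbf{A second weakness} is that the combinatorial core of (iii)$\Rightarrow$(i) is announced rather than carried out.
\begin{itemize}
\item The family of all $k$-tight cuts is not nested. Dunwoody's theorem gives only an invariant nested subfamily.
\item A single cut size need not separate every pair of ends. You must treat all sizes up to the uniform bound, or iterate the structure-tree construction.
\item Local finiteness of the resulting tree and compactness of its vertex stabilizers both depend on finiteness of the blocks. That is precisely the substance of the Kr\"on--M\"oller theorem \cite{KronMollerTopological}.
\item A Berger--Seymour decomposition (Theorem~\ref{thm:background-Berger-Seymour}) is not $G$-invariant. It can supply the uniform bound on separator sizes, but not the invariant tree itself.
\end{itemize}
You should either cite the Kr\"on--M\"oller result for this step or give the block-finiteness argument explicitly.
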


The next proposition adds the compact and two-ended cases.

\begin{proposition}
\label{thm:background-lc-quasitree}
Let $G$ be a compactly generated totally disconnected locally compact group.
A Cayley--Abels graph of $G$ is a quasi-tree if and only if $G$ admits a
continuous proper cocompact action on a locally finite tree.
\end{proposition}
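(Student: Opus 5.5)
The plan is to treat the two directions separately and to split the forward direction according to whether $G$ is non-elementary, compact, or two-ended. Throughout, let $\mathcal G$ be a Cayley--Abels graph of $G$. By definition $G$ acts on $\mathcal G$ continuously, vertex-transitively and with compact open stabilizers. Hence the action is proper and cocompact (one vertex orbit, finitely many edge orbits by local finiteness), and $\mathcal G$ is a proper geodesic space. So Theorem~\ref{thm:background-lc-Milnor-Svarc} shows that $\mathcal G$ is quasi-isometric to $G$ with a compact word metric. In particular, any two Cayley--Abels graphs of $G$ are quasi-isometric, and more generally so is any proper geodesic space carrying such an action.

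For the ``if'' direction, let $G$ act continuously, properly and cocompactly on a locally finite tree $U$. Give $U$ its path metric, with edges realized as unit segments, so that it becomes a proper geodesic metric space on which $G$ acts isometrically. I would then check that properness in the graph sense (compact vertex stabilizers) together with continuity gives metric properness for the action on the geometric realization. Cocompactness follows from finiteness of the quotient graph. Theorem~\ref{thm:background-lc-Milnor-Svarc} then makes $U$ quasi-isometric to $G$, hence to $\mathcal G$, so $\mathcal G$ is a quasi-tree.

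For the ``only if'' direction, suppose $\mathcal G$ is a quasi-tree. There are three cases.

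\emph{$G$ compact.} Then $G$ fixes a vertex of any tree; the one-vertex tree with trivial action is continuous, proper and cocompact.

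\emph{$G$ non-elementary.} $G$ is quasi-isometric to the quasi-tree $\mathcal G$, and so to some tree. I would upgrade this to a locally finite bushy tree so that Theorem~\ref{thm:background-lc-bushy-tree}(iii)$\Rightarrow$(i) applies. Local finiteness can be arranged via Theorem~\ref{thm:background-Antolin}: an abstract uniform tree $U$ on $V(\mathcal G)$ has a bijection to $V(\mathcal G)$ that is bi-Lipschitz, and since $\mathcal G$ has bounded degree (it is vertex-transitive and locally finite), $U$ has bounded degree. Bushiness should follow from non-elementarity. Because $G$ is non-compact, $\mathcal G$ is unbounded. Because $G$ is not two-ended, and the number of ends is a quasi-isometry invariant, the tree has at least three ends, in fact infinitely many. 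Vertex-transitivity of $\mathcal G$ then gives coarse homogeneity, which forces branch points with three unbounded components within a uniform distance $R$ of every vertex. This is the step I expect to be the main obstacle. The approach I would try is to transport, via the quasi-isometry, the branching near a base point to every vertex using the $G$-action, with uniform constants; alternatively, I would quote Cornulier's homogeneity argument behind Carette's theorem.

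\emph{$G$ two-ended.} Here the aim is an action on a line. A compactly generated totally disconnected two-ended group has a compact open normal subgroup $K$ with $G/K$ isomorphic to $\mathbb Z$ or to the infinite dihedral group. It then acts through this quotient on the simplicial line $\mathbb Z$. The stabilizers are compact extensions of finite groups, hence compact, and the quotient graph is finite. If this structure theorem is not directly available, I would fall back to the action on the ends: a finite-index open subgroup fixes both ends, and I would obtain a continuous homomorphism to $\mathbb Z$ with compact kernel from a Busemann-type translation on the quasi-line $\mathcal G$.

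The three cases together give the proposition.
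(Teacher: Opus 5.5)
\paragraph{Comparison with the paper.} Your route is the paper's. You prove the converse by Theorem~\ref{thm:background-lc-Milnor-Svarc}, and split the forward direction into three cases: the compact case (one-vertex tree), the non-elementary case (upgrade to a locally finite bushy tree and apply Theorem~\ref{thm:background-lc-bushy-tree}), and the two-ended case (act through a discrete virtually cyclic quotient on a simplicial line). In the two-ended case you quote the structure theorem: a compact open normal $K$ with $G/K\cong\mathbb Z$ or $D_\infty$. The paper instead takes from Carette a proper continuous isometric action on $\mathbb R$ and notes that its image is discrete and virtually cyclic. These are the same fact in two guises: the kernel of that action is compact, and discreteness of the image makes it open. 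Either route works. In the non-elementary case the paper only asserts the quasi-isometry to a locally finite bushy tree. Your justification of local finiteness, via Theorem~\ref{thm:background-Antolin} and the bounded degree of $\mathcal G$, goes beyond the paper and is correct.

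\paragraph{The gap.} One inference fails as written. You pass from ``$\mathcal G$ is unbounded and $G$ is not two-ended'' to ``the tree has at least three ends''. An unbounded locally finite tree can have exactly one end, for instance a ray, or a ray carrying finite hairs of increasing length. Quasi-isometry invariance of the number of ends does not exclude this, so you need homogeneity already at this point, not only for bushiness.

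\paragraph{A repair.} Since $\mathcal G$ is infinite, connected, locally finite and vertex-transitive, translate geodesic segments of length $2n$ so that their midpoints sit at a fixed vertex, and extract a limit. This gives a bi-infinite geodesic in $\mathcal G$. Its image in the tree is a bi-infinite quasi-geodesic, so it stays near a geodesic line. Hence the tree has at least two ends, and, since $G$ is not two-ended, at least three.

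With this in place your transport argument for bushiness goes through. Write $\phi$ for the bi-Lipschitz bijection from the tree to $V(\mathcal G)$. The maps $\phi^{-1}g\phi$ ($g\in G$) are bi-Lipschitz with constants independent of $g$, so they coarsely preserve Gromov products. Take three ends that branch at a vertex $b$. The centre of the tripod spanned by their images lies within uniformly bounded distance of the image of $b$. Removing that centre leaves at least three unbounded components, which is exactly bushiness with a uniform constant $R$.
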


\begin{proof}
The converse follows from the locally compact Milnor--\v{S}varc lemma,
Theorem~\ref{thm:background-lc-Milnor-Svarc}.  Suppose that a
Cayley--Abels graph is a quasi-tree.  If it is bounded, then $G$ is compact
and acts properly and cocompactly on a one-vertex tree.  If it is
non-elementary, it is quasi-isometric to a locally finite bushy tree, and
Theorem~\ref{thm:background-lc-bushy-tree} applies.

It remains to consider the two-ended case.  By the characterization of
elementary locally compact groups used in
\cite[Section~3.2]{Carette}, $G$ admits a continuous proper isometric action
on $\mathbb R$.  Its image in $\operatorname{Isom}(\mathbb R)$ is a
noncompact closed totally disconnected subgroup, hence is discrete and
virtually cyclic.  It is therefore cocompact and preserves a simplicial
lattice in $\mathbb R$.  The induced action on the corresponding simplicial
line is continuous, proper, and cocompact.
\end{proof}

Finally, we state the Bass--Serre correspondence in the form used below;
the statement combines the two cited directions.

A \emph{graph of groups} assigns groups to the vertices and edges of a graph,
together with injective homomorphisms from each edge group to the groups at
its incident vertices.  Its fundamental group and associated tree are
understood in the sense of Bass--Serre theory.

\begin{theorem}
\cite[Chapter~I, \S5.3, Theorem~12, and \S5.4, Theorem~13]{SerreTrees}
\label{thm:background-Bass-Serre}
A group acting without inversions on a tree is the fundamental group of the
associated quotient graph of groups, whose local groups are the vertex and
edge stabilizers; conversely, every graph of groups has such a Bass--Serre
tree.
\end{theorem}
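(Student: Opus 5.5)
Since Theorem~\ref{thm:background-Bass-Serre} is quoted from \cite{SerreTrees}, I will not reprove it in detail. I will only recall the structure of the two directions, to fix the conventions used when we apply it. I would prove the converse direction first and then deduce the structure theorem from it.

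\emph{Converse direction.} Start with a graph of groups $(\mathbb G,Y)$. Fix a maximal tree $T_0\subseteq Y$ and form $\pi=\pi_1(\mathbb G,Y,T_0)$ in the usual way. It is generated by the vertex groups $G_v$ and by stable letters $t_y$, one for each edge $y\notin T_0$. The relations identify an edge group's two images, twisted by $t_y$, and set $t_y=1$ on $T_0$.
\begin{itemize}
\item Define the graph $\widetilde X$ by
\[
V(\widetilde X)=\bigsqcup_v \pi/G_v,\qquad E(\widetilde X)=\bigsqcup_y \pi/G_y .
\]
The edge $gG_y$ joins $gG_{o(y)}$ to $g\,t_yG_{t(y)}$.
\item The group $\pi$ acts on $\widetilde X$ by left multiplication. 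The action has no inversions because every edge orbit carries a fixed orientation. Stabilizers are conjugates of the local groups, and the quotient is $Y$.
\item $\widetilde X$ is connected because $\pi$ is generated by the $G_v$ and the $t_y$. Any word in these generators therefore gives an edge path from the base vertex $G_{v_0}$ to its translate.
\item $\widetilde X$ has no circuits. This is the main obstacle, and the key input is the reduced-word (normal form) theorem for $\pi_1(\mathbb G,Y)$ \cite[Chapter~I, \S5.2]{SerreTrees}. A reduced word of positive length does not represent an element of $G_{v_0}$. A nontrivial closed reduced path in $\widetilde X$ would yield a reduced word representing an element of a vertex group, which is impossible.
\end{itemize}
I would prove the normal form theorem first, by letting $\pi$ act on the set of reduced words, in van der Waerden style.

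\emph{Forward direction.} Let $\Gamma$ act without inversions on a tree $X$, and put $Y=\Gamma\backslash X$ with quotient map $p$.
\begin{itemize}
\item Choose a maximal tree $T_0\subseteq Y$. Lift it to a subtree $j(T_0)\subseteq X$, extending an injective lift one edge at a time. This is possible because $p$ is surjective on the edges at each vertex.
\item For each edge $y\notin T_0$, lift $y$ to an edge whose origin lies in $j(T_0)$. Its terminus is $g_y\,j(t(y))$ for some $g_y\in\Gamma$.
\item Take the local groups to be the stabilizers of the lifted vertices and edges. The monomorphisms are the inclusions, conjugated by $g_y$ on the terminal side.
\end{itemize}
This data defines a homomorphism $\phi:\pi_1(\mathbb G,Y,T_0)\to\Gamma$ sending $t_y\mapsto g_y$. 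It also defines a $\phi$-equivariant graph morphism $\Phi:\widetilde X\to X$ extending the lift.

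It remains to show that $\phi$ is an isomorphism.
\begin{itemize}
\item $\Phi$ is surjective because $\phi(\pi)$ already meets every $\Gamma$-orbit of vertices and edges, and $X$ is connected.
\item $\Phi$ is locally injective, since the edges at a lifted vertex $v$ correspond to $\operatorname{Stab}(v)/\operatorname{Stab}(e)$ on both sides.
\item A surjective, locally injective morphism of trees is an isomorphism, because a folded image of a path would create a circuit.
\item Hence $\Phi$ is bijective. An element of $\ker\phi$ fixes every vertex of $\widetilde X$, so it lies in a vertex group. Vertex groups inject into $\Gamma$, so $\ker\phi$ is trivial.
\item $\phi$ is surjective because $\Gamma$ is generated by the vertex stabilizers and the $g_y$. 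Alternatively, given $g\in\Gamma$, bijectivity of $\Phi$ lets us correct $g$ by an element of $\phi(\pi)$ until it fixes the base vertex.
\end{itemize}
Thus $\Gamma$ is the fundamental group of the quotient graph of stabilizers, as claimed.
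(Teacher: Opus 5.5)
Your outline is correct and follows the argument of the source the paper cites. The paper gives no proof of its own beyond quoting Serre's Theorems~12 and~13, whose proofs build the coset tree $\widetilde X$ (acyclic by the reduced-word theorem) and then show that the comparison morphism $\Phi:\widetilde X\to X$ is locally bijective, hence an isomorphism, which forces $\phi$ to be one as well. The only step to tighten is that surjectivity of $\Phi$ comes from the star bijection in your next bullet (local surjectivity plus connectedness of $X$), not merely from the image of $\Phi$ meeting every $\Gamma$-orbit.
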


It follows directly from the quotient description that a finite graph of
groups gives a cocompact action.  The local valence formula also shows that
if every edge group has finite index in both incident vertex groups, then
the Bass--Serre tree is locally finite.

The later classification passes from the orbital graph to its Schlichting
completion, then to a tree action and a finite graph of groups.

\section{POTP on spanning subgraphs}
\label{sec:supports}

We now define POTP on connected spanning subgraphs of a Cayley graph.  We
compare two supports by the lengths of their replacement paths and then
characterize $T$-POTP for zero-dimensional actions by equicontinuity of the
normalized replacement transformations.

Throughout, $\Gamma$ is a finitely generated group, $S=S^{-1}$ is a finite
generating set, and
\[
 C_S=\Cay(\Gamma,S)
\]
is the Cayley graph with vertex set $\Gamma$ and edges
$\{g,ag\}$, where $a\in S$ and $g\in\Gamma$.  Let
$\alpha:\Gamma\to\Homeo(X)$ be a continuous action on a compact metric space
$(X,d)$.  We write $gx$ for $\alpha_g(x)$.

\begin{definition}
Let $H$ be a subgraph of $C_S$ and let $\delta>0$.  An
$(H,\delta)$-pseudo-orbit is a family $(x_g)_{g\in V(H)}$ in $X$ such that
\[
 d(ax_g,x_{ag})<\delta
\]
whenever $a\in S$ and $\{g,ag\}\in E(H)$.  It is $\varepsilon$-traced by
$x\in X$ if
\[
 d(gx,x_g)<\varepsilon \qquad (g\in V(H)).
\]
The action has $H$-POTP if for every $\varepsilon>0$ there
is $\delta>0$ such that every $(H,\delta)$-pseudo-orbit is
$\varepsilon$-traced.
\end{definition}

When $H=C_S$, this is ordinary POTP.  We shall mainly use connected
spanning supports, so that $V(H)=\Gamma$.

\begin{definition}
The action has \emph{Cayley-tree POTP} if there are a finite symmetric
generating set $S$ and a spanning tree $T\subset C_S$ for which it has
$T$-POTP.
\end{definition}

All these notions are invariant under topological conjugacy.  For a fixed
support $H$, the property is independent of the compatible metric on $X$,
because any two compatible metrics on a compact space are uniformly
equivalent.

We first quantify the propagation of local errors along words of bounded
length.

\begin{lemma}
\label{lem:finite-propagation}
For every $\eta>0$ and $L\in\N$ there exists $\delta>0$ with the following
property.  Suppose $1\leq n\leq L$, $a_1,\ldots,a_n\in S$, and
$y_0,\ldots,y_n\in X$ satisfy
\[
 d(a_i y_{i-1},y_i)<\delta \qquad (1\leq i\leq n).
\]
Then
\[
 d(a_n\cdots a_1y_0,y_n)<\eta.
\]
\end{lemma}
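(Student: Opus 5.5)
The plan is to induct on $n$, using only two facts: each of the finitely many homeomorphisms $x\mapsto bx$ with $b\in\Gamma$ is uniformly continuous on the compact space $X$, and only finitely many words of length at most $L$ occur.

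First fix notation. For a word $w=a_n\cdots a_1$ and $0\leq i\leq n$, write $w_{>i}=a_n\cdots a_{i+1}$, with $w_{>n}=e$. The set $F_L$ of all elements $a_m\cdots a_k$ with $a_j\in S$ and length at most $L$ is finite, since $S$ is finite.

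Next choose tolerances. Using uniform continuity of each $b\in F_L$, pick $\theta>0$ such that
\[
 d(u,v)<\theta\ \Longrightarrow\ d(bu,bv)<\eta/L\qquad(b\in F_L).
\]
Then put $\delta=\theta$.

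The argument is a telescoping estimate, written here as
\[
 d(a_n\cdots a_1y_0,y_n)\leq\sum_{i=1}^{n}
 d\bigl(w_{>i}a_iy_{i-1},\,w_{>i}y_i\bigr).
\]
This holds because consecutive terms of the chain $w_{>0}y_0,\,w_{>1}y_1,\dots,w_{>n}y_n=y_n$ differ exactly by the displayed quantities, since $w_{>i-1}=w_{>i}a_i$.

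Each summand is less than $\eta/L$. Indeed $d(a_iy_{i-1},y_i)<\delta=\theta$, and $w_{>i}$ lies in $F_L$ (for $i=n$ we have $w_{>n}=e$; add $e$ to $F_L$). Summing at most $L$ such terms gives a total less than $\eta$.

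There is no real obstacle. The one point needing care is that $\delta$ must be chosen uniformly over all words, which is exactly why the finiteness of $F_L$ is used. Equivalently, one can pick a single modulus for the finite family $F_L$, which is automatically equicontinuous.
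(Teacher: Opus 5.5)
Your argument is correct, and it organizes the estimate differently from the paper. The paper inducts on $L$. At each step it uses a modulus of continuity of the generator maps $x\mapsto ax$ ($a\in S$) to fix how much error may be tolerated before the last generator $a_n$ is applied. It then invokes the induction hypothesis at that smaller scale for the shorter word. Thus only the moduli of the generators are used, composed up to $L$ times.

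You instead take a single common modulus for the finite family $F_L$ of all words of length less than $L$. You then bound the total error by a telescoping triangle inequality along the chain $w_{>i}y_i$, using $w_{>i-1}=w_{>i}a_i$. Your version makes the error budget additive and explicit: at most $L$ terms, each below $\eta/L$. The price is invoking uniform continuity of every word map rather than just the generators. Since $F_L$ is finite, both routes are equally elementary.

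Three cosmetic points:
\begin{itemize}
\item Your opening sentence announces an induction on $n$, but the argument you actually give is direct, not inductive.
\item The homeomorphisms in that opening sentence should be indexed by $F_L$, not by all of $\Gamma$.
\item Since the paper's $\N$ contains $0$, set aside the case $L=0$ (where the statement is vacuous) before dividing by $L$.
\end{itemize}
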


\begin{proof}
This follows by induction on $L$.  At the induction step, use uniform
continuity of the finitely many maps $x\mapsto ax$, $a\in S$, to choose the
error allowed before applying the last generator.  Compactness makes all
choices uniform in the points and in the word of length at most $L$.
\end{proof}

The property is monotone under adding edges: if $H_1$ and $H_2$ have the
same vertex set, $E(H_1)\subseteq E(H_2)$, and the action has $H_1$-POTP,
then it has $H_2$-POTP, because every $(H_2,\delta)$-pseudo-orbit is also an
$(H_1,\delta)$-pseudo-orbit.  Every connected spanning support contains a
spanning tree, and a tree therefore imposes the fewest local equations among
connected spanning supports.

\subsection{Comparison of supports}
\label{sec:comparison}

If every edge of one support can be replaced by a path of uniformly bounded
length in another support, Lemma~\ref{lem:finite-propagation} converts local
errors on the latter support into local errors on the former.

Let $H_1,H_2$ be connected spanning subgraphs of $C_S$.  Define
\[
 L(H_1,H_2)=\sup\{d_{H_1}(u,v):\{u,v\}\in E(H_2)\}.
\]
Thus the identity map from $(\Gamma,d_{H_1})$ to
$(\Gamma,d_{H_2})$ is bilipschitz exactly when both
$L(H_1,H_2)$ and $L(H_2,H_1)$ are finite.

\begin{theorem}
\label{thm:support-comparison}
If $H_1$ and $H_2$ are bilipschitz through the identity map, then an arbitrary
compact $\Gamma$-action has $H_1$-POTP if and only if it has
$H_2$-POTP.
\end{theorem}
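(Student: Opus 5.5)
By symmetry it suffices to prove one implication. We assume $H_1$-POTP and deduce $H_2$-POTP; the converse follows by exchanging the roles of $H_1$ and $H_2$. Only the finiteness of $L:=L(H_1,H_2)$ will be used. In particular, we will show that an $(H_2,\delta)$-pseudo-orbit is automatically an $(H_1,\eta)$-pseudo-orbit for a suitable $\delta$ depending on $\eta$.

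Fix $\varepsilon>0$ and let $\eta>0$ be the constant given by $H_1$-POTP for this $\varepsilon$. Apply Lemma~\ref{lem:finite-propagation} with this $\eta$ and the bound $L$ to obtain $\delta>0$. Now let $(x_g)_{g\in\Gamma}$ be an $(H_2,\delta)$-pseudo-orbit, and consider an edge $\{g,ag\}\in E(H_1)$ with $a\in S$.

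\emph{Step 1: a short $H_2$-path between the endpoints.} Because $\{g,ag\}$ is an edge of $C_S$, the hypothesis $L(H_2,H_1)=L<\infty$ gives a path in $H_2$
\[
 g=v_0,v_1,\dots,v_n=ag,\qquad n\le L .
\]
Each step of this path is an $H_2$-edge $\{v_{i-1},v_i\}$. Being also a Cayley edge, it has the form $v_i=a_iv_{i-1}$ for some $a_i\in S$, using the symmetry $S=S^{-1}$ to orient it in the direction of travel. Since the pseudo-orbit inequality was imposed on every $H_2$-edge in either orientation (both $\{h,bh\}$ and its reverse $\{bh,b^{-1}(bh)\}$ are covered), we get
\[
 d(a_ix_{v_{i-1}},x_{v_i})<\delta \qquad (1\le i\le n).
\]

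\emph{Step 2: propagate the errors along the path.} Applying Lemma~\ref{lem:finite-propagation} with $y_i=x_{v_i}$ yields
\[
 d(a_n\cdots a_1x_g,x_{ag})<\eta .
\]
Here $a_n\cdots a_1g=ag$, so $a_n\cdots a_1=a$ as group elements. Because the action is a homomorphism, $a_n\cdots a_1x_g=ax_g$. Hence $d(ax_g,x_{ag})<\eta$. The case $n=0$ cannot occur, since the endpoints $g$ and $ag$ are distinct vertices of an edge.

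\emph{Step 3: conclude.} By Step 2, $(x_g)$ is an $(H_1,\eta)$-pseudo-orbit, so $H_1$-POTP makes it $\varepsilon$-traced. This proves $H_2$-POTP.

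The only point requiring care is the orientation bookkeeping in Step 1: each $H_2$-edge must be written as $v_i=a_iv_{i-1}$ with $a_i\in S$, and the word $a_n\cdots a_1$ must equal $a$ in $\Gamma$. Both are immediate from the symmetry of $S$ and from the fact that the path runs from $g$ to $ag$. I expect no genuine obstacle here.
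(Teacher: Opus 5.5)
Your proof is correct and is essentially the paper's argument, which shows that every $(H_1,\delta)$-pseudo-orbit is an $(H_2,\eta)$-pseudo-orbit from $L(H_1,H_2)<\infty$, Lemma~\ref{lem:finite-propagation}, and the fact that the label product along the chosen path equals $vu^{-1}$; your Steps 1--3 are the same argument in the mirror direction. One small slip: with the paper's definition $L(H_1,H_2)=\sup\{d_{H_1}(u,v):\{u,v\}\in E(H_2)\}$, the quantity your direction actually needs (and the one Step 1 invokes) is $L(H_2,H_1)$, not the $L(H_1,H_2)$ named in your opening paragraph; this is harmless because the bilipschitz hypothesis makes both finite.
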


\begin{proof}
Suppose first that $L(H_1,H_2)\leq L<\infty$.  Fix an error scale $\eta$.
Choose $\delta$ from Lemma~\ref{lem:finite-propagation} for paths of length at
most $L$.  For every edge $\{u,v\}$ of $H_2$, choose an $H_1$-path from $u$
to $v$ of length at most $L$.  The label product along this path is precisely
$vu^{-1}$.  Hence every $(H_1,\delta)$-pseudo-orbit is an
$(H_2,\eta)$-pseudo-orbit.  It follows that $H_2$-POTP implies
$H_1$-POTP.  Interchanging the two supports proves the converse.
\end{proof}

\begin{definition}
Let $H$ be a connected spanning subgraph of $C_S$.  Its stretch in $C_S$ is
\[
 \operatorname{str}_S(H)
  =L(H,C_S)
  =\sup_{a\in S,\,g\in\Gamma}d_H(g,ag).
\]
If this number is finite, we call $H$ a \emph{bounded-stretch support}.  A
bounded-stretch spanning tree is also called a \emph{tree spanner}.
\end{definition}

Since $H\subseteq C_S$, one always has $d_{C_S}\leq d_H$.  We therefore
obtain the following direct consequence.

\begin{corollary}
\label{cor:ordinary-vs-support}
If $H$ is a bounded-stretch connected spanning support, then, for every
compact $\Gamma$-action, $H$-POTP is equivalent to ordinary POTP.
\end{corollary}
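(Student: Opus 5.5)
We apply Theorem~\ref{thm:support-comparison} with $H_1=H$ and $H_2=C_S$. This requires the identity map $(\Gamma,d_H)\to(\Gamma,d_{C_S})$ to be bilipschitz, so the whole argument reduces to checking the two constants $L(H,C_S)$ and $L(C_S,H)$.

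First, $L(H,C_S)=\operatorname{str}_S(H)$ by definition, and this is finite by the bounded-stretch hypothesis.

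Second, $H$ is a subgraph of $C_S$. So every edge $\{u,v\}\in E(H)$ is also an edge of $C_S$, and $d_{C_S}(u,v)=1$ unless $u=v$. Edges of the form $\{g,ag\}$ with $ag=g$ can occur only if $e\in S$, and such loops impose no constraint. Hence $L(C_S,H)\leq1$.

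With both constants finite, Theorem~\ref{thm:support-comparison} shows that $H$-POTP is equivalent to $C_S$-POTP. By the remark after the definition of $H$-POTP, $C_S$-POTP is exactly ordinary POTP.

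The direction from $H$-POTP to ordinary POTP also follows directly from monotonicity under adding edges. The substantive direction, from ordinary POTP to $H$-POTP, is the content of the bounded-$L$ half of the proof of Theorem~\ref{thm:support-comparison}. There, each $C_S$-edge is replaced by an $H$-path of length at most $\operatorname{str}_S(H)$, and Lemma~\ref{lem:finite-propagation} then converts an $(H,\delta)$-pseudo-orbit into a $(C_S,\eta)$-pseudo-orbit.

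I expect no real obstacle. The one point to check is that the label product along the replacement $H$-path is exactly $ag\,g^{-1}=a$, which holds because edge labels in the left Cayley graph multiply as $g\mapsto bg$.
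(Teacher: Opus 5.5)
Your proposal is correct and follows the paper's argument: the paper also obtains the corollary from Theorem~\ref{thm:support-comparison} with $H_1=H$ and $H_2=C_S$, using $L(H,C_S)=\operatorname{str}_S(H)<\infty$. It states the other bound, $L(C_S,H)\leq1$, as $d_{C_S}\leq d_H$ because $H\subseteq C_S$.
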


The proof uses bounded stretch to replace each Cayley edge by a path of
uniformly bounded length.  Without such a bound,
Lemma~\ref{lem:finite-propagation} does not provide a single local error
tolerance valid for all replacement paths.

\subsection{Replacement paths and equicontinuity}
\label{subsec:replacement-paths}

Let $S=S^{-1}$ be a finite generating set and let $T\subseteq C_S$ be a
spanning tree.  We write $[u,v]_T$ for the vertex set of the unique
$T$-geodesic from $u$ to $v$.  Define the \emph{normalized replacement-path set}
\begin{equation}
\mathcal R_S(T)=
 \{agv^{-1}:a\in S,\ g\in\Gamma,\ v\in[g,ag]_T\}.
\label{eq:replacement-path-set}
\end{equation}
This is a subset of $\Gamma$, not in general a subgroup.  Taking the two
endpoints $v=ag$ and $v=g$ shows that
\begin{equation}
 \{e\}\cup S\subseteq\mathcal R_S(T),
 \qquad\text{and hence}\qquad
 \langle\mathcal R_S(T)\rangle=\Gamma.
\label{eq:replacement-set-generates}
\end{equation}
Nevertheless, multiplication need not preserve the set.  For example, let
$\Gamma=\langle a\rangle\cong\mathbb Z$, let
$S=\{a,a^{-1}\}$, and take $T=C_S$.  Every replacement path is one Cayley
edge, so
\[
 \mathcal R_S(T)=\{e,a,a^{-1}\},
\]
which is not a subgroup.  All occurrences
of $\mathcal R_S(T)$ below therefore treat it only as an indexing set for a
family of transformations.

The order of the factors in \eqref{eq:replacement-path-set} is forced by
the right-shift convention.  In fact, if
$g=v_0,v_1,\ldots,v_n=ag$ is the $T$-geodesic and
$v_i=b_i v_{i-1}$, then the coordinate transported at the $i$-th step is
$agv_i^{-1}$.

The following theorem applies to arbitrary compact metrizable actions:
$T$-POTP implies equicontinuity of the transformations indexed by all
normalized tree replacement paths.  For subshifts this is equivalent to the
period-coset condition in
Theorem~\ref{thm:intro-main-replacement}\textup{(b)}.

\begin{theorem}
\label{thm:replacement-equicontinuity}
Let a finitely generated group $\Gamma$ act on a compact metrizable space
$X$.  If the action has $T$-POTP for a spanning tree
$T\subseteq C_S$, then the family
\[
 \{x\mapsto rx:r\in\mathcal R_S(T)\}
\]
is equicontinuous.
\end{theorem}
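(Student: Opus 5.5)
Given $\varepsilon>0$, I take $\delta>0$ from $T$-POTP for the tolerance $\varepsilon/3$. I will show that $d(x,y)<\delta$ implies $d(rx,ry)<\varepsilon$ for every $r=agv^{-1}\in\mathcal R_S(T)$. The idea is to build a single $(T,\delta)$-pseudo-orbit that follows the orbit of $x$ on one part of the tree and the orbit of $y$ on the other, with the splice placed on the replacement path $[g,ag]_T$ next to $v$. Tracing then forces both orbits to be shadowed by one point. Since the deleted Cayley edge $\{g,ag\}$ imposes no constraint, the discontinuity can be routed so that it sits only at one tree edge.

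For the construction, fix $a\in S$, $g\in\Gamma$ and $v\in[g,ag]_T$. If $\{g,ag\}\in E(T)$, then $[g,ag]_T=\{g,ag\}$, so $r\in\{a,e\}$, and continuity of finitely many maps suffices; I handle this case separately, or let the general argument absorb it. Otherwise, let $v'$ be the vertex adjacent to $v$ on the path toward $ag$. If $v=ag$, then $r=e$ and there is nothing to prove. Deleting the tree edge $\{v,v'\}$ splits $T$ into two components: $A\ni v$ and $B\ni v',ag$. Then $g\in A$, since the path from $g$ to $v$ avoids that edge. Given $x,y$ with $d(x,y)<\delta$, I set $x_h=hv^{-1}x$ for $h\in A$ and $x_h=hv^{-1}y$ for $h\in B$. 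Inside each component, the tree edges satisfy the pseudo-orbit equation exactly, since $b\,hv^{-1}x=(bh)v^{-1}x$. On the single crossing edge, with $v'=bv$, we have $bx_v=bx$ and $x_{v'}=by$. So I need $d(bx,by)<\delta$ rather than $d(x,y)<\delta$. I fix this by first choosing $\eta$ by uniform continuity of the finitely many maps $b\in S$ so that $d(x,y)<\eta$ gives $d(bx,by)<\delta$. With this, $(x_h)$ is a $(T,\delta)$-pseudo-orbit.

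By $T$-POTP there is a point $z$ with $d(hz,x_h)<\varepsilon/3$ for all $h$. Taking $h=v\in A$ gives $d(vz,x)<\varepsilon/3$, and taking $h=ag\in B$ gives $d(agz,agv^{-1}y)<\varepsilon/3$. To compare with $rx=agv^{-1}x$, I replace $x$ by a point on the $A$-side that sees $ag$ through the tree. The cleaner route is to swap roles: put the $x$-orbit on $B$ and the $y$-orbit on $A$, and evaluate at both $v$ and $ag$. This gives $d(vz,y)<\varepsilon/3$ and $d(agz,rx)<\varepsilon/3$, which alone do not compare $rx$ with $ry$. So I will use two pseudo-orbits with a common $z$-free comparison. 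The first is the all-$y$ orbit $x'_h=hv^{-1}y$, which is traced by $w$ with $d(vw,y)<\varepsilon/3$ and $d(agw,ry)<\varepsilon/3$. The second is the spliced orbit, traced by $z$. This still does not close the argument, because $vz$ and $vw$ are only known to be close to $y$, and transporting that closeness by $agv^{-1}$ is exactly the equicontinuity being proved.

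The main obstacle is therefore to avoid circularity. My plan is to argue by contradiction and use compactness. Suppose there are $x_n,y_n\to$ a common limit with $d(r_nx_n,r_ny_n)\geq\varepsilon$. Splice the $x_n$-orbit on $A_n$ with the $y_n$-orbit on $B_n$, and also splice the $x_n$-orbit on $A_n$ with the $x_n$-orbit on $B_n$ (the latter is an exact orbit). Tracing with a fixed $\delta$ for tolerance $\varepsilon/3$ gives two points whose $ag$-images lie within $\varepsilon/3$ of $r_ny_n$ and of $r_nx_n$ respectively. The step I expect to be hardest is forcing these two tracing points to agree. I would attempt this by using the exact orbit itself as the tracer on one side and invoking a uniqueness-type consequence of POTP along the long path $[v,ag]_T$, though I am not sure this yields a uniform bound without additional hypotheses.
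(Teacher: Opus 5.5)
There is a genuine gap. Your argument never closes, and the fallback you sketch, compactness plus a ``uniqueness-type consequence of POTP'', cannot close it. $T$-POTP gives no uniqueness of tracing points, since nothing like expansivity is assumed.

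The missing step is within reach of your first construction. You correctly observed that $g\in A$, but you then evaluated the tracing only at $v$ and at $ag$. Evaluate it at $g$ as well: $d(gz,gv^{-1}x)<\varepsilon'$, where $\varepsilon'$ is the tracing tolerance. The tracer $z$ is a genuine point, so its orbit respects the deleted Cayley edge: $agz=a(gz)$. Uniform continuity of the single generator $a$ then gives $d(agz,rx)<\varepsilon/3$, provided $\varepsilon'$ was chosen so that $d(p,p')<\varepsilon'$ implies $d(ap,ap')<\varepsilon/3$ for all $a\in S$. Tracing at $ag\in B$ gives $d(agz,ry)<\varepsilon'\le\varepsilon/3$. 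The triangle inequality then yields $d(rx,ry)<2\varepsilon/3$, uniformly in $r$.

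So the circularity you worried about does not arise. You never transport closeness by $agv^{-1}$: on $A$ the pseudo-orbit is an exact orbit and performs that transport for free, and at the end only one generator $a$ is applied. One spliced pseudo-orbit with one tracer suffices, so the second (all-$x$ or all-$y$) pseudo-orbit and the contradiction argument are unnecessary.

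To make this rigorous, reorder the constants. First choose $\varepsilon'\le\varepsilon/3$ by uniform continuity of the generator maps. Then take $\delta$ from $T$-POTP at scale $\varepsilon'$. Finally take a modulus $\theta\le\delta$ such that $d(x,y)<\theta$ implies $d(bx,by)<\delta$ for all $b\in S$. Note that the crossing edge imposes both oriented conditions, $d(bx_v,x_{v'})=d(bx,by)$ and $d(b^{-1}x_{v'},x_v)=d(y,x)$, so you need $\theta\le\delta$ as well as the continuity condition.

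With these changes your construction is the paper's proof. The only difference is that the paper splices at the tree edge immediately before $v$ (on the $g$-side) rather than immediately after it, which is immaterial.
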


\begin{proof}
Fix a target scale $\eta>0$.  Choose $\varepsilon>0$ so that
$\varepsilon<\eta/3$ and
\[
 d(x,y)<\varepsilon
 \quad\Longrightarrow\quad
 d(ax,ay)<\eta/3\qquad(a\in S).
\]
Choose $\delta>0$ from $T$-POTP at scale $\varepsilon$.
By uniform continuity of the finitely many generator maps, choose
$\theta>0$ so that $d(p,p')<\theta$ implies
\begin{equation}
 d(p,p')<\delta,\qquad
 d(b^{-1}p,b^{-1}p')<\delta,\qquad
 d(ap,ap')<\eta
 \quad(a,b\in S).
\label{eq:replacement-equicont-modulus}
\end{equation}
Take $r\in\mathcal R_S(T)$, say $r=qv^{-1}$, where $q=ag$ and
$v\in[g,q]_T$.  If $v=g$, then $r=a$ and the last inequality in
\eqref{eq:replacement-equicont-modulus} applies.  Suppose $v\ne g$, let $u$ be
the vertex immediately before $v$ on $[g,q]_T$, and write $v=bu$.
Deleting $\{u,v\}$ separates $g$ from $q$.  For $p,p'\in X$ with
$d(p,p')<\theta$, put
\[
 y=v^{-1}p,\qquad y'=v^{-1}p',
\]
and define
\[
 x_h=
 \begin{cases}
  hy,&h\text{ lies in the component containing }g,\\
  hy',&h\text{ lies in the component containing }q.
 \end{cases}
\]
Every tree edge except $\{u,v\}$ has zero error.  Across that edge the two
oriented errors are $d(p,p')$ and
$d(b^{-1}p,b^{-1}p')$, so
\eqref{eq:replacement-equicont-modulus} makes $(x_h)$ a
$(T,\delta)$-pseudo-orbit.  Let $z$ $\varepsilon$-trace it.  Tracing at
$g$ and then applying $a$ gives
\[
 d(qz,qv^{-1}p)<\eta/3,
\]
whereas tracing at $q$ gives
\[
 d(qz,qv^{-1}p')<\varepsilon<\eta/3.
\]
Thus $d(rp,rp')<2\eta/3$.  The number $\theta$ is independent of $r$,
which proves equicontinuity.
\end{proof}

For zero-dimensional compacta, ordinary POTP together with this
equicontinuity condition also implies $T$-POTP.  After establishing the
symbolic description of equicontinuity, we give a single proof of both parts of
Theorem~\ref{thm:intro-main-replacement} in
Subsection~\ref{subsec:proof-main-replacement}.

\section{Symbolic criteria}
\label{sec:shifts}

Every nonempty SFT has ordinary POTP.  For a fixed spanning tree $T$,
Theorem~\ref{thm:intro-main-replacement}\textup{(b)} shows that $T$-POTP
depends on the cardinalities of
$P_\ell(X)\backslash c\mathcal R_S(T)$.  For the full shift
$P_\ell(X)=\{e\}$, so this condition is precisely bounded stretch; a
nontrivial period subgroup may make these coset images finite even when the
replacement paths have unbounded length.

\subsection{Full shifts and bounded stretch}

Throughout this section, $S=S^{-1}$ denotes a fixed finite generating set
of $\Gamma$.  We retain the right-shift and finite-type conventions of
Section~\ref{sec:background}.

We use the SFT--POTP theorem stated as
Theorem~\ref{thm:CL-SFT} in the preliminaries.

For any connected spanning support $H$, $H$-POTP implies ordinary POTP and
hence forces a nonempty subshift to be of finite type by
Theorem~\ref{thm:CL-SFT}.  If $H$ has bounded stretch, the converse follows
from Corollary~\ref{cor:ordinary-vs-support}.  The full shift shows that the
bounded-stretch condition is necessary as well as sufficient.

\begin{theorem}
\label{thm:full-shift-detector}
Let $\mathcal A$ be a finite set with at least two elements and let $H$ be a connected
spanning subgraph of $C_S$.  The full shift $\mathcal A^\Gamma$ has $H$-POTP if
and only if
\[
 \operatorname{str}_S(H)<\infty.
\]
\end{theorem}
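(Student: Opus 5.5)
The sufficiency direction is immediate from earlier results. The full shift has no forbidden patterns, so it is an SFT and, by Theorem~\ref{thm:CL-SFT}, it has ordinary POTP. If $\operatorname{str}_S(H)<\infty$, then Corollary~\ref{cor:ordinary-vs-support} shows that $H$-POTP is equivalent to ordinary POTP, and hence $H$-POTP holds.

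For necessity, assume $\operatorname{str}_S(H)=\infty$ and argue by contradiction using a two-piece pseudo-orbit, modelled on the proof of Theorem~\ref{thm:replacement-equicontinuity} but adapted to a general connected support $H$. Fix the standard metric on $\mathcal A^\Gamma$: $d(x,y)<2^{-n}$ iff $x$ and $y$ agree on the ball $B_n$ of radius $n$ in $C_S$. Take $\varepsilon$ small, so that $\varepsilon$-closeness means agreement at the identity coordinate. Suppose $H$-POTP gives $\delta$, corresponding to agreement on some ball $B_N$. Choose $a\in S$ and $g\in\Gamma$ with $d_H(g,ag)>2N+2$, and put $q=ag$. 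Let $U=\{h:d_H(g,h)\le N'\}$ for a radius $N'$ chosen between $N$ and $d_H(g,q)-N$, say $N'=\lfloor d_H(g,q)/2\rfloor$. Then $g\in U$ and $q\notin U$.

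Now define two configurations $y,y'$ that differ only at one coordinate, arranged so that $(gy)(e)$ and $(gy')(e)$ coincide while $(qy)(e)$ and $(qy')(e)$ differ. With the right shift, $(\sigma_h y)(e)=y(h)$, so we need $y(q)\ne y'(q)$ and $y=y'$ elsewhere; this uses $|\mathcal A|\ge2$. Set $x_h=\sigma_h y$ for $h\in U$ and $x_h=\sigma_h y'$ otherwise. The pseudo-orbit equation holds exactly on every edge with both endpoints on the same side. For an $H$-edge $\{h,bh\}$ crossing the boundary of $U$, we compare $\sigma_{bh}y$ with $\sigma_{bh}y'$. These agree on $B_N$ unless $q\in B_N\,bh$, that is, unless $d_{C_S}(bh,q)\le N$.

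The main obstacle is controlling such crossing edges: $H$-distance and $C_S$-distance are different, so a vertex $H$-far from $q$ could still be $C_S$-close to $q$. To get around this, choose $U$ so that every vertex within $C_S$-distance $N+1$ of $q$ lies outside $U$ and away from its boundary. A cleaner construction does this directly. Let $K=B_{N+1}q$, a finite set, and take $U$ to be the connected component of $g$ in $H\setminus K$. This requires $g\notin K$; since $d_{C_S}(g,q)=1$, the point $g$ is in $K$, so the ball must be shrunk. Instead, make the discrepancy occur at a coordinate $q'$ chosen $C_S$-far from $g$ but in the same pattern.

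The robust version is the following. Use equicontinuity failure exactly as in Theorem~\ref{thm:replacement-equicontinuity}: if $H$-POTP held, the same splitting argument, with a cut separating $g$ from $q$, would make $\{\sigma_r\}$ equicontinuous for the relevant transports. For the full shift, $\sigma_r$ moves coordinate $e$ to coordinate $r$, and equicontinuity of $\{\sigma_r:r\in R\}$ forces $R$ to be finite. Unbounded stretch yields infinitely many distinct normalized elements $r=qv^{-1}$ with $v$ on an $H$-path, which contradicts this. For general $H$ in place of a tree, replace the single deleted edge by a finite cut, namely the edge boundary of an $H$-ball around $g$ of radius less than $d_H(g,q)$. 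The crossing errors are then handled by choosing the discrepancy at a coordinate that is far in $C_S$ from every boundary vertex. Making that choice uniformly is the step where I expect the real work to lie.
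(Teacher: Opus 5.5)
Your sufficiency direction is correct and matches the paper's argument. The necessity direction, however, is not proved.

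\textbf{The crossing-edge step is left open.} You cut the two-piece pseudo-orbit along the boundary of an $H$-ball around $g$. You correctly observe that a boundary vertex may be $H$-far from $q$ yet $C_S$-close to it, so crossing edges need not have small error. You then leave exactly this step unresolved.

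\textbf{The equicontinuity fallback does not close it.} Theorem~\ref{thm:replacement-equicontinuity} is proved only for spanning trees, and its proof uses that deleting a single edge separates $g$ from $q$. You also cannot pass from $H$ to a spanning tree $T\subseteq H$, since $T$-POTP implies $H$-POTP and not conversely. With a multi-edge cut, the error across each cut edge $\{h,bh\}$ is governed by the map $bhv^{-1}$. Uniform control of those maps is the very statement you are trying to prove.

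\textbf{The tracing scale is wrong.} If $\varepsilon$-closeness only means agreement at the identity coordinate, then every family $(x_h)$ is $\varepsilon$-traced by $z(h):=x_h(e)$, so no contradiction is possible. You must choose $\varepsilon$ forcing agreement on $\{e\}\cup S$. Then tracing at $g$ reads the coordinate $z(ag)=z(q)$, which is the symbolic form of ``tracing at $g$ and then applying $a$'' in the tree proof.

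\textbf{The missing idea is to localize around $q$ rather than $g$.} This is the reverse of your attempt with $K=B_{N+1}q$.
Let $K$ be the $C_S$-ball of radius $N+1$ about $q$. Let $W$ be the set of vertices joined to $q$ by an $H$-path lying inside $K$. Then $d_H(v,q)\le|K|-1$ for every $v\in W$, a bound independent of $(g,q)$. Hence $g\notin W$ once $d_H(g,ag)\ge|K|$; note that the threshold is the cardinality of the ball, not a multiple of its radius.

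Put $x_v=\sigma_v\mathbf 1_q$ for $v\in W$ and $x_v=\mathbf 0$ otherwise, where $\mathbf 1_q$ equals $1$ at $q$ and $0$ elsewhere. Edges inside $W$, or inside its complement, have zero error. Suppose $\{h,bh\}\in E(H)$ with $h\in W$ and $bh\notin W$. By maximality of $W$, $bh\notin K$. Hence both $h$ and $bh$ are at $C_S$-distance greater than $N$ from $q$, and both oriented errors vanish on $B_N$. Tracing at $g$ gives $z(q)=x_g(a)=0$, while tracing at $q$ gives $z(q)=x_q(e)=1$, a contradiction.

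The paper's proof is this construction written in coordinate form. It uses the equivalence relation on $\Gamma\times\Gamma$ generated by $(v,kb)\sim(bv,k)$ over $H$-edges $\{v,bv\}$, with $k$ in the $\delta$-window $D$. The class of $(q,e)$ is $\{(v,qv^{-1}):v\in W'\}$ for an $H$-connected set $W'\ni q$ confined to $\{v:qv^{-1}\in D\cup DS\}$. This forces $g\notin W'$ whenever $d_H(g,q)>|D\cup DS|$.
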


\begin{proof}
If the stretch is finite, the full shift has ordinary POTP by
Theorem~\ref{thm:CL-SFT}, and
Corollary~\ref{cor:ordinary-vs-support} gives $H$-POTP.

For the converse, it is enough to use two symbols, denoted by $0$ and $1$.
Fix a compatible metric on $\mathcal A^\Gamma$.  Choose $\varepsilon>0$ so small that
\begin{equation}
 d(z,z')<\varepsilon
 \quad\Longrightarrow\quad
 z|_{\{e\}\cup S}=z'|_{\{e\}\cup S}.
\label{eq:epsilon-cylinder}
\end{equation}
Choose $\delta>0$ from $H$-POTP for this $\varepsilon$.  There
is a finite set $D\subset\Gamma$ such that
\begin{equation}
 z|_D=z'|_D \quad\Longrightarrow\quad d(z,z')<\delta.
\label{eq:delta-cylinder}
\end{equation}
Enlarge $D$, if necessary, so that $e\in D$, and put
\[
 D_S=D\cup DS.
\]
Suppose, toward a contradiction, that some $a\in S$ and $g\in\Gamma$ satisfy
\begin{equation}
 d_H(g,ag)>|D_S|.
\label{eq:long-H-edge}
\end{equation}
Put $q=ag$.  On the set of all coordinate variables
$\Gamma\times\Gamma$, introduce the equivalence relation generated by
\begin{equation}
 (v,kb)\sim(bv,k)
 \quad\text{whenever }\{v,bv\}\in E(H),\ b\in S,\ k\in D.
\label{eq:coordinate-relation}
\end{equation}
The product of the two entries is invariant under every generating relation:
\[
 (kb)v=k(bv).
\]
Consider the two variables $(g,a)$ and $(q,e)$, both of which have product
$q$.  If they were equivalent, projecting an equivalence chain to its first
entry would give an $H$-walk from $g$ to $q$.  Every vertex $v$ occurring in
this projected walk satisfies
\[
 qv^{-1}\in D_S.
\]
Indeed, the two relative coordinates at a generating step are $k$ and $kb$
with $k\in D$ and $b\in S$.  Since the map $v\mapsto qv^{-1}$ is injective,
deleting repetitions from the projected walk would produce an $H$-path from
$g$ to $q$ of length at most $|D_S|-1$.  This contradicts
\eqref{eq:long-H-edge}.  Hence $(g,a)$ and $(q,e)$ belong to different
equivalence classes.

Assign the symbol $1$ to the equivalence class of $(q,e)$ and the symbol $0$
to every other class.  For each $v\in\Gamma$, define a configuration
$x_v\in\{0,1\}^\Gamma$ by taking the value of $x_v(h)$ to be the symbol
assigned to the class of $(v,h)$.  Relation
\eqref{eq:coordinate-relation} says exactly that
\[
 (\sigma_bx_v)|_D=x_{bv}|_D
\]
on every edge $\{v,bv\}$ of $H$.  By \eqref{eq:delta-cylinder}, the family
$(x_v)_{v\in\Gamma}$ is an $(H,\delta)$-pseudo-orbit.

If a point $z$ $\varepsilon$-traced it, then the comparison at the vertex
$g$, in coordinate $a$, would give $z(q)=x_g(a)=0$, whereas the comparison
at the vertex $q$, in coordinate $e$, would give $z(q)=x_q(e)=1$.  This
contradicts \eqref{eq:epsilon-cylinder}.  Therefore
$d_H(g,ag)\leq |D_S|$ for all $a\in S$ and $g\in\Gamma$, which proves finite
stretch.
\end{proof}

The following examples show, respectively, that ordinary POTP need not imply
Cayley-tree POTP and that $T$-POTP can hold for one spanning tree and fail
for another spanning tree in the same Cayley graph.

\begin{example}[Ordinary POTP without Cayley-tree POTP]
\label{ex:Z2-full-shift-no-tree}
Let $\Gamma=\mathbb Z^2$ and let $\mathcal A$ be a finite alphabet with
$|\mathcal A|\geq2$.  The full shift $\mathcal A^{\mathbb Z^2}$ is an SFT and hence has
ordinary POTP by Theorem~\ref{thm:CL-SFT}.  If it had $T$-POTP for a
spanning tree of some Cayley graph, then
Theorem~\ref{thm:full-shift-detector} would make that tree a bounded-stretch
spanning tree.  The Cayley graph would consequently be quasi-isometric to a
tree, forcing $\mathbb Z^2$ to be virtually free by
Theorem~\ref{thm:background-Antolin}, which it is not.  Thus this classical full shift
has ordinary POTP but fails $T$-POTP for every spanning tree of every Cayley
graph of $\mathbb Z^2$.
\end{example}

\begin{example}[Dependence on the spanning tree]
\label{ex:two-spanning-trees}
Let
\[
 \Gamma=\mathbb Z\times C_2
 =\langle a,b\mid b^2=e,\ ab=ba\rangle,
 \qquad S=\{a,a^{-1},b\},
\]
and let $X=\{0,1\}^{\Gamma}$ be the full shift.  Its Cayley graph is the
infinite ladder with vertices $a^n$ and $a^nb$, $n\in\mathbb Z$.  Define
two spanning subgraphs by
\[
 \begin{aligned}
 E(T_1)={}&
 \bigl\{\{a^n,a^{n+1}\}:n\in\mathbb Z\bigr\}
 \cup
 \bigl\{\{a^n,a^nb\}:n\in\mathbb Z\bigr\},\\
 E(T_2)={}&
 \bigl\{\{a^n,a^{n+1}\}:n\in\mathbb Z\bigr\}
 \cup
 \bigl\{\{a^nb,a^{n+1}b\}:n\in\mathbb Z\bigr\}
 \cup\bigl\{\{e,b\}\bigr\}.
 \end{aligned}
\]
The graph $T_1$ is one rail together with all rungs, while $T_2$ consists
of both rails joined by the single rung at the identity.  Hence both are
connected, acyclic, and spanning.  Every omitted rail edge has a
$T_1$-replacement path of length three, so
$\operatorname{str}_S(T_1)\leq3$.  On the other hand,
\[
 d_{T_2}(a^n,a^nb)=2|n|+1\longrightarrow\infty,
\]
although $\{a^n,a^nb\}$ is a Cayley edge.  Therefore
Theorem~\ref{thm:full-shift-detector} gives
\[
 X\text{ has }T_1\text{-POTP},
 \qquad
 X\text{ does not have }T_2\text{-POTP}.
\]
Thus $T$-POTP depends on both the action and the chosen spanning tree,
whereas Cayley-tree POTP requires the existence of at least one such tree.
\end{example}

\subsection{Virtual freeness}

An abstract uniform tree becomes a spanning-tree subgraph after the
generating set is enlarged by a finite ball.

\begin{proof}[Proof of Theorem~\ref{thm:intro-main-universal}]
The equivalence of \textup{(ii)} and \textup{(iii)} is
Theorem~\ref{thm:full-shift-detector}.  If \textup{(ii)} holds, the identity
map between $C_S$ and $T$ is bilipschitz.  Hence $\Gamma$ is quasi-isometric
to a tree and is virtually free by
Theorem~\ref{thm:background-Antolin}.

Conversely, suppose that $\Gamma$ is virtually free and begin with any
finite symmetric generating set $S_0$.  By
Theorem~\ref{thm:background-Antolin}, the graph $C_{S_0}$ admits an abstract
uniform tree: there are a tree $U$ and a bijection
\[
 \phi:V(U)\longrightarrow\Gamma
\]
such that both $\phi$ and $\phi^{-1}$ are Lipschitz.  Transport the tree
structure through $\phi$ and regard $U$ as a tree on $\Gamma$.  There is an
integer $L\geq1$ such that the endpoints of every $U$-edge have
$S_0$-distance at most $L$.  Put
\[
 S=B_{S_0}(e,L)\setminus\{e\}.
\]
Then $S$ is finite and symmetric, contains $S_0$, and every edge of $U$ is
an edge of $C_S$.  Thus $U\subseteq C_S$ is a spanning tree.

Let $M$ be a Lipschitz constant for $\phi^{-1}$.  If $\{g,ag\}$ is an edge
of $C_S$, then $d_{S_0}(g,ag)\leq L$, and therefore
\[
 d_U(g,ag)\leq M d_{S_0}(g,ag)\leq ML.
\]
Hence $\operatorname{str}_S(U)\leq ML$, proving \textup{(ii)}.  Condition
\textup{(ii)} implies \textup{(iv)} by
Corollary~\ref{cor:ordinary-vs-support}.  Finally, \textup{(iv)} implies
\textup{(iii)} because the full shift $\mathcal A^\Gamma$ is an SFT and
therefore has ordinary POTP.
\end{proof}

\subsection{A criterion for a fixed SFT}
\label{sec:individual-SFT}

For a fixed spanning tree, we characterize the subshifts with $T$-POTP in
terms of their common left-period subgroup, which need not equal the action
kernel.  The proof expresses the required equicontinuity through finite
determinacy sets.

Let $\mathcal A$ be a finite alphabet and let
$X\subseteq\mathcal A^\Gamma$ be a nonempty subshift.  We use the common
left-period subgroup, its coordinate-coset description, and its relation to
the action kernel from Lemma~\ref{lem:left-period-subgroup}.

For a finite set $D\subseteq\Gamma$, put
\begin{equation}
 \operatorname{Det}_X(D)=
 \{t\in\Gamma:x|_D=y|_D\Longrightarrow x(t)=y(t)
                    \text{ for all }x,y\in X\}.
\label{eq:determinacy-hull}
\end{equation}
This is the set of coordinates whose symbols are determined by the pattern
on $D$.  Like $\mathcal R_S(T)$, it need not be a subgroup; the next lemma
shows instead that it is saturated by right $P_\ell(X)$-cosets.

\begin{lemma}
\label{lem:finite-determinacy-periods}
Let $X\subseteq \mathcal A^\Gamma$ be a nonempty subshift and let
$P=P_\ell(X)$.  For every finite $D\subseteq\Gamma$,
\begin{equation}
 P\operatorname{Det}_X(D)=\operatorname{Det}_X(D),
 \qquad
 |P\backslash\operatorname{Det}_X(D)|<\infty,
 \quad\text{and}\quad
 PD\subseteq\operatorname{Det}_X(D).
\label{eq:det-cosets}
\end{equation}
Consequently, for an arbitrary set $E\subseteq\Gamma$, the following are
equivalent:

\begin{enumerate}[label=(\roman*)]
\item $E\subseteq\operatorname{Det}_X(D)$ for some finite
      $D\subseteq\Gamma$;
\item $|P\backslash E|<\infty$.
\end{enumerate}
\end{lemma}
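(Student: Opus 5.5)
The plan is to derive every assertion from Lemma~\ref{lem:left-period-subgroup}, which says the coordinate observable $\kappa_t$ depends only on the coset $Pt$.

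\emph{Saturation.} Let $t\in\operatorname{Det}_X(D)$ and $p\in P$. Then $\kappa_{pt}=\kappa_t$, so $x(pt)=x(t)$ for every $x\in X$. Hence $x|_D=y|_D$ forces $x(pt)=y(pt)$, i.e.\ $pt\in\operatorname{Det}_X(D)$. Since $e\in P$, this gives $P\operatorname{Det}_X(D)=\operatorname{Det}_X(D)$.

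\emph{$PD$ is determined.} Every $d\in D$ trivially lies in $\operatorname{Det}_X(D)$. By saturation, $PD\subseteq\operatorname{Det}_X(D)$.

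\emph{Finiteness.} This is the main step. For $t\in\operatorname{Det}_X(D)$ there is a well-defined map $f_t$ on the finite set $W=\{x|_D:x\in X\}\subseteq\mathcal A^D$, namely $f_t(x|_D)=x(t)$. So $\kappa_t=f_t\circ\rho$, where $\rho:X\to W$ is restriction. The set of maps $W\to\mathcal A$ has at most $|\mathcal A|^{|W|}$ elements. Hence only finitely many distinct observables $\kappa_t$ occur with $t\in\operatorname{Det}_X(D)$. By \eqref{eq:coordinate-observable-cosets}, distinct observables correspond exactly to distinct cosets, so $|P\backslash\operatorname{Det}_X(D)|\le|\mathcal A|^{|W|}<\infty$. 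The only subtlety is to use the converse direction of Lemma~\ref{lem:left-period-subgroup}: equal observables imply equal cosets.

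\emph{The equivalence.} (i)$\Rightarrow$(ii): the coset image $P\backslash E$ is contained in $P\backslash\operatorname{Det}_X(D)$, which is finite by the previous step. (ii)$\Rightarrow$(i): choose coset representatives, $P\backslash E=\{Pt_1,\dots,Pt_n\}$, and set $D=\{t_1,\dots,t_n\}$, which is finite. Then $E\subseteq PD\subseteq\operatorname{Det}_X(D)$ by the inclusion $PD\subseteq\operatorname{Det}_X(D)$ already shown.
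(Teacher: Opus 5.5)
Your proof is correct and matches the paper's argument essentially step for step. Saturation comes from $\kappa_{pt}=\kappa_t$. Finiteness comes from factoring $\kappa_t$ through the finite set of admissible $D$-patterns and using the injectivity of $Pt\mapsto\kappa_t$ from Lemma~\ref{lem:left-period-subgroup}. The equivalence follows by taking $D$ to be a finite set of coset representatives; your explicit bound $|\mathcal A|^{|W|}$ just makes the paper's finiteness count quantitative.
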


\begin{proof}
By Lemma~\ref{lem:left-period-subgroup}, the coordinate observables satisfy
$\kappa_t=\kappa_s$ if and only if $ts^{-1}\in P$.  If
$t\in\operatorname{Det}_X(D)$, then $\kappa_t$ factors through the
finite set of admissible $D$-patterns.  There are only finitely many maps
from that finite set to $\mathcal A$.  Hence only finitely many distinct coordinate
maps occur among the $\kappa_t$ with
$t\in\operatorname{Det}_X(D)$.  By
\eqref{eq:coordinate-observable-cosets},
each equality class is one right $P$-coset.  This proves the finiteness
assertion in \eqref{eq:det-cosets}.

If $p\in P$ and $t\in\operatorname{Det}_X(D)$, then
$\kappa_{pt}=\kappa_t$, so $pt\in\operatorname{Det}_X(D)$.  Hence
$P\operatorname{Det}_X(D)\subseteq\operatorname{Det}_X(D)$; the reverse
inclusion follows from $e\in P$.  Finally,
$D\subseteq\operatorname{Det}_X(D)$, and therefore
$PD\subseteq\operatorname{Det}_X(D)$.

If (i) holds, the first part of \eqref{eq:det-cosets} gives (ii).  If (ii)
holds, choose a finite set $D$ containing one representative of every right
$P$-coset which meets $E$.  Then
$E\subseteq PD\subseteq\operatorname{Det}_X(D)$.
\end{proof}

For example, for the full shift $\mathcal A^{\mathbb Z}$ with $|\mathcal A|\geq2$, one has
$P_\ell(X)=\{0\}$ and $\operatorname{Det}_X(D)=D$.  Taking
$D=\{0,1\}$ shows explicitly that $\operatorname{Det}_X(D)$ need not be a
subgroup.

Lemma~\ref{lem:finite-determinacy-periods} has the following dynamical
reformulation.  Recall that a
family of maps on a compact space is \emph{equicontinuous} when one modulus
of continuity works for every map in the family.

\begin{proposition}
\label{prop:period-equicontinuity}
Let $X\subseteq \mathcal A^\Gamma$ be a nonempty subshift, let
$P=P_\ell(X)$, and let $E\subseteq\Gamma$.  Then the family of restricted
shifts
\[
 \{\sigma_r|_X:r\in E\}
\]
is equicontinuous if and only if
\[
 |P\backslash cE|<\infty
 \qquad\text{for every }c\in\Gamma.
\]
\end{proposition}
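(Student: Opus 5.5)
The plan is to reduce equicontinuity on the subshift to a statement about coordinates and then apply Lemma~\ref{lem:finite-determinacy-periods}. Fix the standard product metric, so that closeness is agreement on a large finite window. The family $\{\sigma_r|_X:r\in E\}$ is equicontinuous exactly when, for every finite window $F\subseteq\Gamma$, there is a finite $D\subseteq\Gamma$ with the following property: for $x,y\in X$ with $x|_D=y|_D$ we have $(\sigma_r x)|_F=(\sigma_r y)|_F$ for all $r\in E$. Since $(\sigma_r x)(c)=x(cr)$, this condition says $FE\subseteq\operatorname{Det}_X(D)$. Because $F$ is finite, it suffices to treat singletons $F=\{c\}$ and take the union of the resulting finite sets $D$, using the monotonicity $\operatorname{Det}_X(D)\subseteq\operatorname{Det}_X(D')$ for $D\subseteq D'$. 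So equicontinuity is equivalent to the following: for every $c\in\Gamma$ there is a finite $D$ with $cE\subseteq\operatorname{Det}_X(D)$.

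The equivalence of (i) and (ii) in Lemma~\ref{lem:finite-determinacy-periods}, applied to the set $cE$, turns this into $|P\backslash cE|<\infty$ for every $c$, which is the claim.

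The one point needing care is the translation between the metric modulus and the window formulation.

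\emph{Equicontinuity gives the window condition.} Given $c$, choose $\varepsilon$ so that $d(z,z')<\varepsilon$ forces $z(c)=z'(c)$. Equicontinuity provides $\eta$, and we choose a finite $D$ such that agreement on $D$ implies $d<\eta$. Then for $x,y\in X$ with $x|_D=y|_D$ and $r\in E$ we get $x(cr)=y(cr)$, so $cE\subseteq\operatorname{Det}_X(D)$.

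\emph{The window condition gives equicontinuity.} Given $\varepsilon$, pick a finite $F$ such that agreement on $F$ gives $d<\varepsilon$. Take $D=\bigcup_{c\in F}D_c$, where $D_c$ is the finite set with $cE\subseteq\operatorname{Det}_X(D_c)$, and choose $\eta$ so that $d<\eta$ forces agreement on $D$. Then every $\sigma_r$ with $r\in E$ moves $\eta$-close points of $X$ to $\varepsilon$-close points.

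Both directions are routine uses of the product topology. I expect no real obstacle beyond keeping the right-shift convention straight, namely that the coordinate read is $cr$ and not $rc$, which is why the condition involves left translates $cE$.
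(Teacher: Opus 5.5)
Your proposal is correct and follows essentially the same route as the paper: translate equicontinuity into the window condition $CE\subseteq\operatorname{Det}_X(D)$ using $(\sigma_rx)(c)=x(cr)$, then apply Lemma~\ref{lem:finite-determinacy-periods}. The only differences are minor. You reduce to singleton windows via monotonicity of $\operatorname{Det}_X$, and you write out the metric-to-window translation, which the paper leaves implicit.
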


\begin{proof}
In the product uniformity, equicontinuity says that for every finite target
window $C\subseteq\Gamma$ there is a finite source window
$D\subseteq\Gamma$ such that
\[
 x|_D=y|_D
 \quad\Longrightarrow\quad
 (\sigma_rx)|_C=(\sigma_ry)|_C
 \qquad(r\in E).
\]
Since $(\sigma_rx)(c)=x(cr)$, this is exactly the inclusion
$CE\subseteq\operatorname{Det}_X(D)$.  By
Lemma~\ref{lem:finite-determinacy-periods}, such a $D$ exists for every
finite $C$ exactly when $P\backslash cE$ is finite for every $c$.
\end{proof}

\subsection{Proof of Theorem~\ref{thm:intro-main-replacement}}
\label{subsec:proof-main-replacement}

We now combine Theorem~\ref{thm:replacement-equicontinuity},
Theorem~\ref{thm:CL-SFT}, and
Proposition~\ref{prop:period-equicontinuity} to prove the replacement-path
criterion.

\begin{proof}[Proof of Theorem~\ref{thm:intro-main-replacement}]
For part~\textup{(a)}, suppose first that the action has $T$-POTP.  It has
ordinary POTP by monotonicity, and its replacement family is equicontinuous
by Theorem~\ref{thm:replacement-equicontinuity}.

Conversely, suppose that the action has ordinary POTP and that the
replacement family is equicontinuous.  Fix $\varepsilon>0$, and choose
$\eta>0$ so that every ordinary $\eta$-pseudo-orbit is
$\varepsilon$-traced.  Choose a finite clopen partition $\mathcal P$ of
$X$ whose atoms have diameter less than $\eta$.

If $\mathcal P$ has only one atom, then $\operatorname{diam}(X)<\eta$, so
every family indexed by $\Gamma$ is already an ordinary
$\eta$-pseudo-orbit and is therefore $\varepsilon$-traced.  We may thus
assume that $\mathcal P$ has at least two atoms and put
\[
 \lambda=\min\{d(A,B):A,B\in\mathcal P,\ A\ne B\}>0.
\]
Equicontinuity gives $\delta>0$ such that
\begin{equation}
 d(x,y)<\delta
 \quad\Longrightarrow\quad
 d(rx,ry)<\lambda
 \qquad(r\in\mathcal R_S(T)).
 \label{eq:replacement-clopen-modulus}
\end{equation}
Thus the two images in \eqref{eq:replacement-clopen-modulus} belong to the
same atom of $\mathcal P$.

Let $(x_h)_{h\in\Gamma}$ be a $(T,\delta)$-pseudo-orbit.  Fix $a\in S$ and
$g\in\Gamma$, put $q=ag$, and write
\[
 g=v_0,v_1,\ldots,v_n=q,
 \qquad v_i=b_i v_{i-1}\quad(b_i\in S)
\]
for the $T$-geodesic.  Set
\[
 r_i=qv_i^{-1},\qquad y_i=r_i x_{v_i}\qquad(0\leq i\leq n).
\]
Each $r_i$ belongs to $\mathcal R_S(T)$ and
$r_{i-1}=r_i b_i$.  The tree pseudo-orbit inequality and
\eqref{eq:replacement-clopen-modulus} imply that
\[
 y_{i-1}=r_i b_i x_{v_{i-1}}
 \quad\text{and}\quad
 y_i=r_i x_{v_i}
\]
belong to the same atom of $\mathcal P$.  Hence all
$y_0,\ldots,y_n$ lie in one atom.  Since
\[
 y_0=ax_g,\qquad y_n=x_{ag},
\]
we obtain $d(ax_g,x_{ag})<\eta$.  The family is therefore an ordinary
$\eta$-pseudo-orbit and is $\varepsilon$-traced.  This proves part~\textup{(a)}.

For part~\textup{(b)}, let $X\subseteq\mathcal A^\Gamma$ be nonempty and
put $P=P_\ell(X)$.  By Theorem~\ref{thm:CL-SFT}, ordinary POTP is equivalent
to $X$ being of finite type.  By part~\textup{(a)}, $T$-POTP is equivalent
to ordinary POTP together with equicontinuity of
$\{\sigma_r|_X:r\in\mathcal R_S(T)\}$.  Applying
Proposition~\ref{prop:period-equicontinuity} with
$E=\mathcal R_S(T)$ shows that this equicontinuity is equivalent to
\[
 \bigl|P\backslash c\mathcal R_S(T)\bigr|<\infty
 \qquad(c\in\Gamma).
\]
This proves both formulations in part~\textup{(b)}.
\end{proof}

Lemma~\ref{lem:finite-determinacy-periods} gives a finite-coordinate form of
Theorem~\ref{thm:intro-main-replacement}\textup{(b)}: for a nonempty SFT
$X$ with $P=P_\ell(X)$, the action has $T$-POTP if and only if, for every
finite $C\subseteq\Gamma$, there is a finite $D\subseteq\Gamma$ such that
\[
 C\mathcal R_S(T)\subseteq\operatorname{Det}_X(D).
\]
In particular, among nonempty SFTs the trees supporting POTP depend only on
the common left-period subgroup.

\subsection{An SFT with unbounded stretch}

If $P_\ell(X)$ is finite, the criterion reduces to bounded stretch: the
finite-coset condition is equivalent to finiteness of
$\mathcal R_S(T)$, and the latter is equivalent to a uniform bound on the
replacement-path lengths.  Consequently, over a non-virtually-free group,
every SFT with Cayley-tree POTP has an infinite common left-period subgroup.
The following example shows that the condition
$|P_\ell(X)|=\infty$ does not imply that $X$ is finite.

\begin{example}[An infinite $\mathbb Z^2$ SFT with $T$-POTP]
\label{ex:comb-quotient-SFT}
Let $\Gamma=\mathbb Z^2$, let
$S=\{\pm e_1,\pm e_2\}$, and let
\begin{equation}
 X=\{x\in\{0,1\}^{\mathbb Z^2}:
       x(m,n)=x(m,n+1)\text{ for all }(m,n)\in\mathbb Z^2\}.
\label{eq:vertical-constant-SFT}
\end{equation}
This is an infinite SFT, conjugate to the full two-shift over $\mathbb Z$;
the second generator acts trivially.  Let $T$ be the comb consisting of all
vertical edges together with the horizontal edges on
$\mathbb Z\times\{0\}$.  The tree $T$ has unbounded stretch: the
$T$-distance between $(m,n)$ and $(m+1,n)$ is $2|n|+1$.

Here $P_\ell(X)=\mathbb Ze_2$.  If $q$ and $g$ are Cayley neighbors, the
projection to the first coordinate of every vertex on $[g,q]_T$ differs
from that of $q$ by at most one.  Therefore
\[
 \mathcal R_S(T)\subseteq
 P_\ell(X)\{e,e_1,e_1^{-1}\}.
\]
Since $P_\ell(X)$ is normal, the same finite-coset conclusion holds for
every left translate $c\mathcal R_S(T)$.  By
Theorem~\ref{thm:intro-main-replacement}\textup{(b)}, $X$ has $T$-POTP.
Thus failure of Cayley-tree POTP for the full shift over $\mathbb Z^2$ does
not imply failure for every infinite $\mathbb Z^2$ SFT.
\end{example}

\subsection{Coset full shifts}

We next apply the period criterion to coset full shifts.  Let
$P\leq\Gamma$ and let $\mathcal A$ be a finite alphabet with at least two symbols.
Denote by
$\mathcal A^{P\backslash\Gamma}$ the subshift of configurations constant on the right
$P$-cosets.  No normality assumption is needed: right translations preserve
the partition into right $P$-cosets.

\begin{corollary}
\label{cor:coset-full-shift-tree}
Let $S=S^{-1}$ be a finite generating set of $\Gamma$, let $P\leq\Gamma$,
and let $\mathcal A$ be a finite alphabet with $|\mathcal A|\geq2$.  For a spanning tree
$T\subseteq C_S$,
the coset full shift $\mathcal A^{P\backslash\Gamma}$ has $T$-POTP if and only
if

\begin{enumerate}[label=(\roman*)]
\item $P$ is finitely generated as a subgroup of $\Gamma$; and
\item $|P\backslash c\mathcal R_S(T)|<\infty$ for every $c\in\Gamma$.
\end{enumerate}

If $P$ is commensurated in $\Gamma$, condition (ii) is equivalent to
$|P\backslash\mathcal R_S(T)|<\infty$.  In particular, if
$P\lhd\Gamma$, it is equivalent to finiteness of the image of
$\mathcal R_S(T)$ in $\Gamma/P$.
\end{corollary}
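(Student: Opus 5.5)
The plan is to apply Theorem~\ref{thm:intro-main-replacement}\textup{(b)} to $X=\mathcal A^{P\backslash\Gamma}$. Two facts about this subshift are needed first: it is nonempty, and $P_\ell(X)=P$. Nonemptiness is clear, since constant configurations belong to $X$.

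For the period subgroup, the inclusion $P\subseteq P_\ell(X)$ is immediate from the defining identity $x(pg)=x(g)$. For the reverse inclusion, suppose $p\notin P$. Take $x$ to be the indicator of the coset $P$, which is possible because $|\mathcal A|\geq2$. Then $x(p\cdot e)\neq x(e)$, so $p\notin P_\ell(X)$. With $P_\ell(X)=P$ in hand, condition~(ii) is exactly the coset condition of Theorem~\ref{thm:intro-main-replacement}\textup{(b)}. It therefore remains to show that $X$ is an SFT if and only if $P$ is finitely generated.

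\emph{If $P$ is finitely generated.} Let $P=\langle p_1,\dots,p_k\rangle$ and impose the finitely many local rules $x(p_j h)=x(h)$ for all $h$. In shift language, the window is $D=\{e,p_1,\dots,p_k\}$, and the allowed patterns are those constant on $D$ (evaluating at $h$ means applying $\sigma_h$ and reading coordinates $e$ and $p_j$). These rules force $x(ph)=x(h)$ for every word $p$ in the $p_j^{\pm1}$, by induction on word length, using also the rule at $p_j^{-1}h$. Hence the resulting SFT is exactly $X$.

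\emph{If $X$ is an SFT.} Suppose $X$ is defined by a finite window $D$ and allowed patterns $\mathcal W$. I expect this direction to be the main obstacle, since a finite window must somehow encode generation of $P$. The approach is to set $P_0=\langle P\cap DD^{-1}\rangle\leq P$, which is finitely generated, and aim to show $P_0=P$. Suppose instead that $P_0\neq P$, and consider $x$ equal to the indicator of the coset $P_0$. Then $x\notin X$, because some $p\in P\setminus P_0$ gives $x(p)\neq x(e)$. I will show that every translate $(\sigma_g x)|_D$ nevertheless equals $y|_D$ for some $y\in X$, contradicting the SFT description. Concretely, consider the coordinates $dg$ for $d\in D$. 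If two of them lie in the same $P$-coset, say $d g(d'g)^{-1}=dd'^{-1}\in P$, then $dd'^{-1}\in P\cap DD^{-1}\subseteq P_0$, so they also lie in the same $P_0$-coset. Thus on $Dg$ the function $x$ is constant on $P$-cosets. It therefore extends to a configuration $y$ that is constant on all right $P$-cosets, i.e. $y\in X$, and $y$ agrees with $\sigma_g x$ on $D$. This contradiction gives $P=P_0$, which is finitely generated.

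\emph{The commensurated refinement.} Assume $P$ is commensurated and $|P\backslash\mathcal R|<\infty$, say $\mathcal R\subseteq PF$ with $F$ finite. Then $c\mathcal R\subseteq cPF$. Commensuration gives $cPc^{-1}\cap P$ of finite index in $cPc^{-1}$, so $cP=(cPc^{-1})c$ is a finite union of cosets $(cPc^{-1}\cap P)t_i c$. Hence $cPF$ meets only finitely many right $P$-cosets, which is condition~(ii). The converse follows by taking $c=e$. Finally, in the normal case $P\backslash\Gamma$ is the group $\Gamma/P$, and the condition becomes finiteness of the image of $\mathcal R_S(T)$ there.
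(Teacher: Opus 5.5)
Your proposal is correct and follows the paper's proof essentially step for step: you identify $P_\ell(X)=P$, reduce to Theorem~\ref{thm:intro-main-replacement}\textup{(b)}, show that a defining window $D$ forces $P=\langle P\cap DD^{-1}\rangle$ (this is the paper's subgroup $L$), and handle the commensurated case by noting that $PcP$ meets only finitely many right $P$-cosets. The differences are cosmetic. You test the single configuration given by the indicator of $P_0$, whereas the paper tests the whole $P_0$-coset full shift. In that step it is $\sigma_g y$, not $y$, that agrees with $\sigma_g x$ on $D$, which is harmless because $X$ is shift-invariant.
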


\begin{proof}
The left period subgroup of the coset full shift is exactly $P$.  Moreover,
this subshift is of finite type exactly when $P$ is finitely generated.  If
$P$ is generated by a finite set $B$, the relations
$x(bg)=x(g)$, $b\in B$, are finite defining rules.  Conversely, suppose a
finite window $F$ defines the coset full shift.  Let $L$ be the subgroup of
$P$ generated by the finite set
\[
 \{f_1f_2^{-1}:f_1,f_2\in F,\ f_1f_2^{-1}\in P\}.
\]
Every configuration constant on right $L$-cosets has, on each translate of
$F$, a pattern which extends to a configuration constant on right
$P$-cosets.  It therefore satisfies all the defining rules.  Equality with
the coset full shift forces $L=P$, so $P$ is finitely generated.  The first
assertion now follows from
Theorem~\ref{thm:intro-main-replacement}\textup{(b)}.

If $P$ is commensurated, every double coset $PcP$ is a finite union of right
$P$-cosets.  It follows that finiteness of
$P\backslash\mathcal R_S(T)$ implies finiteness of
$P\backslash c\mathcal R_S(T)$ for every $c$; the reverse implication is
the case $c=e$.  The final assertion follows by normality.
\end{proof}

\section{Coset shifts and period subgroups}
\label{sec:main-classification}

The translate conditions in
Corollary~\ref{cor:coset-full-shift-tree} have different consequences in two
cases.  If finitely many conjugates of $P$ have finite intersection, they
force $\mathcal R_S(T)$ to be finite and hence force virtual freeness of
$\Gamma$.  If $P$ is commensurated, they are equivalent to a quasi-tree
condition on the orbital graph.

\subsection{Finite intersections of conjugates}
\label{subsec:finite-height-period-subgroups}

\begin{proof}[Proof of Proposition~\ref{prop:intro-finite-conjugate-core}]
Choose $c_1,\ldots,c_m$ as in the proposition and put
\begin{equation}
 K:=\bigcap_{i=1}^m c_i^{-1}Pc_i.
 \label{eq:finite-conjugate-core}
\end{equation}
This group is finite by hypothesis.
If $\Gamma$ is virtually free, then
$\mathcal A^{P\backslash\Gamma}$ is an SFT because $P$ is finitely
generated, and hence it has ordinary POTP by
Theorem~\ref{thm:CL-SFT}.  Theorem~\ref{thm:intro-main-universal} implies
Cayley-tree POTP.

Conversely, choose a finite symmetric generating set $S$ and a spanning
tree $T\subseteq C_S$ witnessing Cayley-tree POTP, and put
$E=\mathcal R_S(T)$.  By
Corollary~\ref{cor:coset-full-shift-tree},
\[
 |P\backslash c_iE|<\infty
 \qquad(1\leq i\leq m).
\]
Left multiplication by $c_i^{-1}$ induces a bijection
\[
 P\backslash c_iE\longrightarrow c_i^{-1}Pc_i\backslash E,
 \qquad
 Pc_ir\longmapsto c_i^{-1}Pc_i r.
\]
Thus, for every $i$, the set $E$ is contained in finitely many right cosets
of $c_i^{-1}Pc_i$.  Distributing the intersection of these finite unions
expresses $E$ as a subset of finitely many sets of the form
\[
 \bigcap_{i=1}^m(c_i^{-1}Pc_i)r_i.
\]
If such an intersection contains $x$, then it equals $Kx$, and is therefore
finite by \eqref{eq:finite-conjugate-core}.  Hence $E$ is finite.

For a fixed replacement path with terminal endpoint $q$, the map
$v\mapsto qv^{-1}$ is injective.  Thus finiteness of $E$ bounds the lengths
of all replacement paths uniformly, so $T$ has bounded stretch.
Theorem~\ref{thm:intro-main-universal} implies that $\Gamma$ is virtually free.
\end{proof}

A subgroup $P$ has \emph{finite height} if there is $n$ such that
$\bigcap_{i=1}^{n+1}g_i^{-1}Pg_i$ is finite whenever
$Pg_1,\ldots,Pg_{n+1}$ are distinct right cosets.
Consequently, Proposition~\ref{prop:intro-finite-conjugate-core} applies to every
infinite-index finite-height subgroup.  We use the following classical
source of finite-height subgroups.

The subgroup $P$ has \emph{finite width} if there is $n$ such that, whenever
$Pg_1,\ldots,Pg_{n+1}$ are distinct right cosets, some pair of conjugates
$g_i^{-1}Pg_i$ and $g_j^{-1}Pg_j$ has finite intersection.

\begin{theorem}
\cite[Main theorem]{GitikMitraRipsSageev}
\label{thm:background-quasiconvex-finite-height}
Every quasiconvex subgroup of a word-hyperbolic group has finite width.
\end{theorem}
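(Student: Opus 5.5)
The statement is the Gitik--Mitra--Rips--Sageev theorem, which the paper only cites. I would reprove it by converting "pairwise infinite intersection of conjugates" into "pairwise bounded distance between cosets", and then use a Helly-type property of quasiconvex sets in hyperbolic spaces to bound the number of cosets by the size of a ball.

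\emph{Setup.} Work in a Cayley graph $Y$ of $\Gamma$ on which left multiplication acts by isometries, so that the left cosets $g_i^{-1}P$ are the natural geometric objects. The right cosets $Pg_i$ and the conjugates $g_i^{-1}Pg_i$ in the definition of finite width correspond to the left cosets $g_i^{-1}P$ under inversion. Fix $\delta$ such that $Y$ is $\delta$-hyperbolic, and $K$ such that $P$ is $K$-quasiconvex. Then every translate $hP$ is $K$-quasiconvex, since left translation is an isometry.

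\emph{Step 1 (infinite intersection gives proximity).} Suppose $h^{-1}Ph\cap P$ is infinite; equivalently, $P$ and the coset $h^{-1}P$ have infinitely many common "directions". I would show that there is a constant $D=D(\delta,K,S)$ such that $d_Y(P,h^{-1}P)\le D$.

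The first argument to try is this. Pick $x\in P\cap h^{-1}Ph$ of very large length. Then $x$ stabilizes $P$, and it also stabilizes $h^{-1}P$ because $x h^{-1}P=h^{-1}(hxh^{-1})P=h^{-1}P$. Choose points $p\in P$ and $p'\in h^{-1}P$ realizing $d(P,h^{-1}P)$. The geodesics $[p,x^Np]\subseteq N_K(P)$ and $[p',x^Np']\subseteq N_K(h^{-1}P)$ have endpoints at distance $d(p,p')$ from each other. By thinness of quadrilaterals, their middle portions fellowtravel within $2\delta$ once $N$ is large relative to $d(p,p')$. This yields points of $P$ and of $h^{-1}P$ within $2K+2\delta$ of each other.

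The quantifier order needs care here. If $x$ is a loxodromic element, then $d(x^Np,p)$ grows linearly, which overwhelms any fixed $d(p,p')$. An infinite subgroup of a hyperbolic group contains an infinite-order element, so such an $x$ exists. Hence $D=2K+2\delta$ works.

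\emph{Step 2 (Helly step).} Let $g_1^{-1}P,\dots,g_{n+1}^{-1}P$ be distinct cosets whose conjugates pairwise intersect infinitely. Applying Step 1 to each pair after translating by $g_i$, the cosets are pairwise $D$-close and each is $K$-quasiconvex. I would then prove a coarse Helly lemma: pairwise $D$-close $K$-quasiconvex subsets of a $\delta$-hyperbolic space all meet a common ball of radius $R=R(\delta,K,D)$, independent of their number.

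My plan for this is to pick a point $z\in g_1^{-1}P$ and, for each $i$, a nearest-point projection $\pi_i(z)$ to the $i$-th coset. Using thin triangles between $z$, $\pi_i(z)$, $\pi_j(z)$ and the $D$-close points of the $i$-th and $j$-th cosets, I would show that the projections are uniformly close to one another once $z$ is chosen to minimize a suitable coarse "barycentric" quantity. The alternative is to use the standard fact that a finite family of quasiconvex sets pairwise close has a coarse centre in a tree approximation. I expect this Helly step to be the main obstacle, because the bound must not depend on $n$; the tree-approximation argument applied to a finite configuration of points near pairwise intersections looks the cleanest.

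\emph{Step 3 (counting).} Translate by an element so that the common point lies at $e$. All $n+1$ cosets then meet the ball $B(e,R)$, and distinct cosets meet it in distinct elements. Therefore $n+1\le|B(e,R)|$, which gives finite width with the uniform constant $n=|B(e,R)|$.
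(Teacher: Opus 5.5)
The paper gives no proof of this statement; it imports it from Gitik--Mitra--Rips--Sageev, so your outline has to stand on its own. Steps~1 and~3 do stand. The quadrilateral argument correctly gives $d(P,h^{-1}P)\le 2K+2\delta$. You do not even need a loxodromic element: any $x\in P\cap h^{-1}Ph$ with $|p^{-1}xp|$ large works, and infinitely many such $x$ exist. Disjointness of distinct cosets then gives $n+1\le|B(e,R)|$ once a uniform $R$ is available.

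\textbf{The gap is Step~2.} It carries all of the content of finite width beyond the pairwise estimate, and you leave it as a plan. Moreover, the route you single out as cleanest fails. Tree approximation of $N$ points in a $\delta$-hyperbolic space has error of order $\delta\log N$. With one point near each pairwise intersection you have $N\approx n^2$, so you only get $R\approx C+c\,\delta\log n$. Step~3 then yields $n+1\le|B(e,C+c\,\delta\log n)|$. Balls grow exponentially, so the right-hand side is of order $n^{c\,\delta\log\lambda}$, and that exponent need not be below $1$; no bound on $n$ follows. Restricting the minimization to points $z\in g_1^{-1}P$ also complicates things, since the natural improving move leaves that coset.

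\textbf{A version of Step~2 that works.}
\begin{enumerate}[label=\textup{(\arabic*)}]
\item Put $A_i=g_i^{-1}P$. Choose a vertex $c$ of $Y$ minimizing $r(c)=\max_i d(c,A_i)$ over all of $Y$; the values are integers, so a minimizer exists.
\item Let $y_i\in A_i$ be a nearest point to $c$ and $r_i=d(c,y_i)$. Thin triangles and $K$-quasiconvexity give $(c\,|\,z)_{y_i}\le K+4\delta$ for every $z\in A_i$, hence $(y_i\,|\,z)_c\ge r_i-K-4\delta$.
\item Take $z\in A_i$ and $z'\in A_j$ with $d(z,z')\le D$. Then $(z\,|\,z')_c\ge\min(r_i,r_j)-D/2$. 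The four-point condition gives $(y_i\,|\,y_j)_c\ge\min(r_i,r_j)-C$ with $C=K+D/2+O(\delta)$.
\item Take $y_1$ with $r_1=r:=r(c)$. For each $j$, $t_j:=(y_1\,|\,y_j)_c$ lies in $[r_j-C,\,r_j]\subseteq[0,r]$. Hence $y_j$ is within $C+4\delta$ of the point of $[c,y_1]$ at distance $t_j$ from $c$.
\item Let $m$ be the vertex of $[c,y_1]$ with $d(c,m)=\lceil r/2\rceil$. Then $r(m)\le\lceil r/2\rceil+C+4\delta$.
\item Minimality forces $r\le 2C+8\delta+1$. This radius is independent of $n$, which is exactly what Step~3 needs.
\end{enumerate}
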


Since an infinite total intersection forces all pairwise intersections to
be infinite, finite width implies finite height.

\begin{proof}[Proof of Corollary~\ref{cor:hyperbolic-quasiconvex-period}]
The subgroup $P$ is finitely generated.  By
Theorem~\ref{thm:background-quasiconvex-finite-height} and the implication
from finite width to finite height, $P$ has finite height.
Let $n$ be as in the definition of finite height.  Since
$[\Gamma:P]=\infty$, choose distinct right cosets
$Pg_1,\ldots,Pg_{n+1}$; then
$\bigcap_{i=1}^{n+1}g_i^{-1}Pg_i$ is finite.
Proposition~\ref{prop:intro-finite-conjugate-core} gives the asserted if-and-only-if
statement.  Ordinary POTP follows
because $P$ is finitely generated, so the coset full shift is an SFT.
\end{proof}

Without commensuration, the quasi-isometry type of the orbital graph alone
does not determine Cayley-tree POTP.  Let $P=\mathbb Z^2$ and
$\Gamma=P*\langle t\rangle$, and take the standard generators of $P$
together with $t^{\pm1}$.  The free factor $P$ is malnormal, while, after
discarding loops, $\mathcal O_S(\Gamma,P)$ is an infinite-valence tree: a
non-backtracking closed path would contradict the free-product normal form.
Nevertheless, $\Gamma$ is not virtually free, since it contains
$\mathbb Z^2$.  Proposition~\ref{prop:intro-finite-conjugate-core} therefore shows that
the coset full shift has ordinary POTP but not Cayley-tree POTP.  Thus the
orbital graph does not by itself determine whether all quantities
$|P\backslash c\mathcal R_S(T)|$ in
Theorem~\ref{thm:intro-main-replacement}\textup{(b)} are finite.

\subsection{Commensurated subgroups}

We prove Theorem~\ref{thm:intro-main-commensurated}.  For a commensurated
subgroup, the next lemma converts an abstract uniform tree on the orbital
coset space into a Cayley spanning tree satisfying the finite-coset condition
of Corollary~\ref{cor:coset-full-shift-tree}.

\begin{lemma}
\label{lem:uniform-coset-tree-lifting}
Let $\Gamma$ be finitely generated and let $P\leq\Gamma$ be finitely
generated and commensurated.  Suppose that, for some finite generating set
$S$, there is an abstract tree $U$ on
$V(\mathcal O_S(\Gamma,P))$ such that the identity maps in both directions
between $U$ and $\mathcal O_S(\Gamma,P)$ are Lipschitz.  Then there are a
finite symmetric generating set $S'\supseteq S$ and a spanning tree
$T\subseteq C_{S'}$ such that
\[
 |P\backslash\mathcal R_{S'}(T)|<\infty.
\]
\end{lemma}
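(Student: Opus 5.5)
The plan is to lift the abstract tree $U$ on $P\backslash\Gamma$ to a Cayley spanning tree on $\Gamma$. Inside each right coset we place a translated copy of a spanning tree of $P$, and we join cosets along the edges of $U$. We then control $P\backslash\mathcal R_{S'}(T)$ through the double-coset finiteness that commensuration provides.

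\emph{Construction.} Let $B=B^{-1}$ be a finite generating set of $P$. Let $L\geq1$ bound the $\mathcal O_S(\Gamma,P)$-distance between the endpoints of every $U$-edge, and let $M$ be a Lipschitz constant for the identity map $\mathcal O_S(\Gamma,P)\to U$. Put
\[
 S'=S\cup B\cup\bigl(B_S(e,L)\setminus\{e\}\bigr).
\]
\begin{enumerate}[label=(\arabic*)]
\item Fix a spanning tree $T_P$ of the left Cayley graph $\Cay(P,B)$. For each coset $C\in P\backslash\Gamma$, fix a representative $g_C$ and transport $T_P$ by right multiplication $p\mapsto pg_C$. Right multiplication preserves left Cayley edges, so this gives a spanning tree $T_C\subseteq C_{S'}$ on the vertex set $C$.
\item For each $U$-edge $\{C,C'\}$, choose $h\in C$ and $w\in B_S(e,L)$ with $wh\in C'$; these exist because $d_{\mathcal O_S}(C,C')\leq L$. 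Add the single Cayley edge $\{h,wh\}$.
\end{enumerate}
Call the resulting subgraph $T$. Contracting each $T_C$ to a point yields exactly $U$. Since $U$ and every $T_C$ are trees, $T$ is a connected, acyclic, spanning subgraph of $C_{S'}$.

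\emph{Key structural claim.} For vertices $g,q$, the $T$-geodesic $[g,q]_T$ meets exactly the cosets on the $U$-geodesic from $Pg$ to $Pq$. It passes through each such coset in one contiguous segment, entering and leaving only by the chosen connecting edges. This follows because the contracted walk in $U$ cannot backtrack: backtracking would force the $T$-path to reuse the unique edge joining two cosets. I expect this bookkeeping, namely verifying that $T$ is a tree and that its geodesics project to $U$-geodesics, to be the main point requiring care, though it is routine tree combinatorics.

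\emph{Coset bound.} Let $a\in S'$, $g\in\Gamma$, $q=ag$, and $v\in[g,q]_T$. Since $|a|_S\leq L':=\max(L,\max_{b\in B}|b|_S)$, we have $d_U(Pg,Pq)\leq ML'$. The coset $Pv$ lies on the $U$-geodesic between them, so $d_U(Pv,Pq)\leq ML'$. Each $U$-edge has orbital length at most $L$, hence $d_{\mathcal O_S}(Pv,Pq)\leq LML'=:R$. Therefore $Pv=Pwq$ for some $w$ with $|w|_S\leq R$, that is, $v=pwq$ with $p\in P$. It follows that
\[
 qv^{-1}=w^{-1}p^{-1}\in Pw^{-1}P .
\]
Because $P$ is commensurated, each double coset $Pw^{-1}P$ is a finite union of right $P$-cosets, as recalled in the proof of Corollary~\ref{cor:coset-full-shift-tree}. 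Moreover, $w$ ranges over the finite ball $B_S(e,R)$. Hence $\mathcal R_{S'}(T)$ lies in finitely many right $P$-cosets, so $|P\backslash\mathcal R_{S'}(T)|<\infty$.
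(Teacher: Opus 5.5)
Your construction of $T$ and the tree bookkeeping agree with the paper's. However, two steps rest on a principle that is false for non-normal $P$: that $d_{\mathcal O_S}(Px,Py)\le R$ yields some $w$ with $|w|_S\le R$ and $Py=Pwx$ (or even $wh\in Py$ for some $h\in Px$). You use it to produce the connecting edges in (2), and again in ``$Pv=Pwq$ for some $w$ with $|w|_S\le R$''.

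An orbital path $Px=C_0\sim C_1\sim\cdots\sim C_m=Py$ only gives $yx^{-1}\in Ps_mP\cdots Ps_1P$ with $s_i\in S$. At each intermediate coset the representative must be changed by an element of $P$. Left multiplication by $P$ does not preserve $C_S$, so these factors cannot be absorbed into a short word; they can be absorbed only when $P$ is normal.

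For a concrete failure, let $\Gamma=\langle a,t\mid tat^{-1}=a^2\rangle$, $P=\langle a\rangle$ and $S=\{a^{\pm1},t^{\pm1}\}$. Here $P$ is commensurated, since $tPt^{-1}=\langle a^2\rangle$ has index $2$ in $P$. Then $P\sim Pt\sim Pt^{-1}at$, where the second edge uses the representative $at\in Pt$, so $d_{\mathcal O_S}(P,Pt^{-1}at)\le 2$. Suppose $h=a^j\in P$ and $wh\in Pt^{-1}at$. Then $w$ has $t$-exponent sum $0$. The only elements of $S$-length at most $2$ with this property are the $a^k$ with $|k|\le 2$, and these give $wh\in P$. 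That is impossible because $t^{-1}at\notin P$. So both your existence claim in (2) (with $L=2$) and your representation of $qv^{-1}$ fail in general.

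The repair is where commensuration genuinely enters. It uses local finiteness of $\mathcal O_S(\Gamma,P)$ together with the fact that $Pg\mapsto Pgk$ is a graph automorphism.

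\emph{Coset bound.} If $d_{\mathcal O_S}(Pv,Pq)\le R$, then $d_{\mathcal O_S}(Pvq^{-1},P)\le R$. The $R$-ball about $P$ contains only finitely many cosets, say $Pw$ with $w$ in a finite set $W_R$. Hence $qv^{-1}\in w^{-1}P\subseteq Pw^{-1}P$ for some $w\in W_R$, and your double-coset conclusion goes through.

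\emph{Connecting edges.} In (2), replace $B_S(e,L)$ by a finite set $D$ of representatives of the cosets within orbital distance $L$ of $P$. For a $U$-edge $\{C,C'\}$ and any $h\in C$, one has $C'h^{-1}=Pd$ with $d\in D$, so $dh\in C'$. Then $\{h,dh\}$ is an edge of $C_{S'}$ once $D\cup D^{-1}\subseteq S'$, and you take $L'=\max_{s\in S'}|s|_S$.

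This is exactly the paper's finite set $D$ and its products $Pd_1P\cdots Pd_mP$. After this correction your argument is the paper's, except that you bound $d_{\mathcal O_S}(Pv,Pq)$ in one step instead of composing the $U$-edges along the geodesic.
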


\begin{proof}
The endpoints of a $U$-edge have uniformly bounded distance in the locally
finite orbital graph.  After translating the first endpoint to $P$, only
finitely many possibilities remain for the second endpoint.  Hence there is
a finite symmetric set $D\subseteq\Gamma$ such that, whenever $Px$ and $Py$
are joined in $U$, one may orient the edge so that
\[
 Pyx^{-1}=Pd\qquad\text{for some }d\in D.
\]
Enlarge $S$ by $D$ and by a finite symmetric generating set of $P$, and call
the result $S'$.

Fix a Cayley spanning tree of $P$.  In each fibre $Pg$ take its right
translate, which is a tree made of $S'$-edges.  For every edge
$\{Px,Py\}$ of $U$, choose $d\in D$ as above and add the Cayley edge
$\{x,dx\}$; its endpoints lie in $Px$ and $Py$.  Let $T$ be the union of
the fibre trees and these joining edges.  Contracting every fibre tree turns
$T$ into $U$.  Thus $T$ is connected and acyclic, hence is a spanning tree
of $C_{S'}$.

Because $S'$ is finite and the identity from the orbital graph to $U$ is
Lipschitz, the projections of the endpoints of every $S'$-edge have
$U$-distance at most one fixed number $k$.  After fibre contraction, its
$T$-replacement path projects to the $U$-geodesic between the two endpoint
fibres and therefore meets
at most $k+1$ fibres.  If $w$ is in an intermediate fibre and $q$ is the
terminal endpoint of the original edge, comparison of consecutive fibres
gives
\begin{equation}
 q w^{-1}\in P d_1P d_2P\cdots P d_mP,
 \qquad m\leq k,\quad d_i\in D.
 \label{eq:finite-double-coset-products}
\end{equation}
Indeed, for consecutive fibres $Px$ and $Py$ one has
$Pyx^{-1}=Pd$, hence $y=pd x$ for some $p\in P$; multiplying these
relations along the $U$-geodesic proves \eqref{eq:finite-double-coset-products}.
Since $P$ is commensurated, each $PdP$ and therefore each product in
\eqref{eq:finite-double-coset-products} is a finite union of right $P$-cosets.
There are only finitely many products with $m\leq k$ and $d_i\in D$, proving
the assertion.
\end{proof}

We shall also use the following relative form of the orbit-map lemma.

\begin{lemma}
\label{lem:orbit-map-comparison}
Suppose that a finitely generated group $\Gamma$ acts by graph automorphisms
and cocompactly on a connected locally finite graph $Z$, and let $H$ be a
vertex stabilizer.  If
$P\leq\Gamma$ is commensurable with $H$, then every orbital coset graph of
$(\Gamma,P)$ is quasi-isometric to $Z$.
\end{lemma}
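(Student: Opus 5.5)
The plan is to compare each orbital coset graph of $(\Gamma,P)$ with $Z$ through an orbit map. The intermediate graph is the orbital graph of the stabilizer pair $(\Gamma,H)$.

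First, $Z$ is locally finite and the action is cocompact. Since $\Gamma$ permutes the finitely many orbits of vertices at distance one from a given vertex, connectedness makes $\Gamma$ act with finitely many vertex orbits. Let $z_0$ be the vertex fixed by $H$. Because $\Gamma$ is finitely generated, the usual Milnor--\v{S}varc style argument shows that the orbit map from $\Gamma$ to $Z$ is coarsely Lipschitz. Concretely, for a finite symmetric generating set $S$, put $R=\max_{a\in S}d_Z(z_0,a^{-1}z_0)$. We work with right cosets and the right action $Pg\cdot h=Pgh$. So define $\phi:H\backslash\Gamma\to Z$ by $\phi(Hg)=g^{-1}z_0$. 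This is well defined, injective onto the orbit $\Gamma z_0$, and equivariant for the right action (acting on $Z$ by $g^{-1}$). For an orbital edge $Hg\sim Hag$ we have $d_Z(g^{-1}z_0,g^{-1}a^{-1}z_0)=d_Z(z_0,a^{-1}z_0)\le R$, so $\phi$ is $R$-Lipschitz. Cocompactness together with connectedness makes the orbit $\Gamma z_0$ coarsely dense.

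The main step is the lower bound, and I expect it to be the main obstacle. I will show that $d_Z$-close orbit points are close in $\mathcal O_S(\Gamma,H)$. Local finiteness of $Z$, together with the stabilizer of $z_0$ being $H$, gives for each $r$ only finitely many orbit points $g^{-1}z_0$ in the $r$-ball around $z_0$. Pick representatives $g_1,\dots,g_N\in\Gamma$ for them. If $d_Z(g^{-1}z_0,h^{-1}z_0)\le r$, translate by $g$: the point $gh^{-1}z_0$ lies in the $r$-ball, so $gh^{-1}z_0=g_j^{-1}z_0$ for some $j$. Then $g_jgh^{-1}\in H$, so $Hh=Hg_jg$, and the orbital distance is at most $M_r:=\max_j|g_j|_S$. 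To turn this into a linear lower bound, take a $Z$-geodesic between orbit points. Each vertex on it lies within the coarse-density constant $c$ of an orbit point. Consecutive chosen orbit points are then within $2c+1$ of each other, so they are at orbital distance at most $M_{2c+1}$. Summing along the geodesic gives $d_{\mathcal O}\le M_{2c+1}(d_Z+1)$. Hence $\phi$ is a quasi-isometry. Local finiteness of $Z$ is used exactly here, to make $M_r$ finite.

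It remains to pass from $H$ to $P$, using that the two are commensurable. Let $K=H\cap P$, which has finite index in both. The natural maps $K\backslash\Gamma\to H\backslash\Gamma$ and $K\backslash\Gamma\to P\backslash\Gamma$ are surjective, with finite fibres. I will show they are quasi-isometries of orbital graphs. They are $1$-Lipschitz, since orbital edges map to edges or loops. For the other direction, write $H=\bigsqcup_{i}Kh_i$ with finitely many $h_i$. If $Hg=Hg'$, then $Kg'=Kh_ig$ for some $i$, so $d(Kg,Kg')\le\max_i|h_i|_S$. Lifting an orbital path in $H\backslash\Gamma$ edge by edge, $Hg\sim Hag$ lifts to $Kg\sim Kag$, and the jump between successive lifts is bounded by the same constant. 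This gives the reverse linear bound. Surjectivity gives coarse density. The same argument applies to $K\to P$. Composing quasi-isometries then shows $\mathcal O_S(\Gamma,P)$ is quasi-isometric to $Z$ for every finite generating set $S$. For a different generating set the orbital graphs are quasi-isometric anyway, as already noted in Subsection~\ref{subsec:prelim-coset-graphs}.
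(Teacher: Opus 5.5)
Your proof is correct and follows essentially the same route as the paper. The orbit map $Hg\mapsto g^{-1}z_0$ is Lipschitz; the lower bound comes from replacing the vertices of a $Z$-geodesic by nearby orbit points and using local finiteness to get finitely many possible jumps; and the passage to $P$ goes through the finite-fibre surjections from $(H\cap P)\backslash\Gamma$. The only cosmetic difference is that the paper enlarges the generating set so these bounded jumps become single orbital edges and then changes back, whereas you bound them directly in the given $S$ by the word lengths $M_r$ and $\max_i|h_i|_S$.
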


\begin{proof}
For $P=H$, the map $Hg\mapsto g^{-1}o$, where $o$ is the chosen vertex, is
well defined.  Cocompactness and connectedness give $R\geq0$ such that every
vertex of $Z$ is at distance at most $R$ from $\Gamma o$.  Since $Z$ is
locally finite, $B_Z(o,2R+1)\cap\Gamma o$ is finite.  For every
$z\in B_Z(o,2R+1)\cap\Gamma o$, choose $a_z\in\Gamma$ such that
$a_z^{-1}o=z$.  Let $A\subseteq\Gamma$ be a finite symmetric set containing
all the elements $a_z$, and enlarge a finite generating set of $\Gamma$ by
$A$.  The map $Hg\mapsto g^{-1}o$ is Lipschitz for the resulting
orbital graph.

Conversely, replace each vertex of a geodesic in $Z$ by a point of
$\Gamma o$ at distance at most $R$, keeping orbit endpoints fixed.  Two
consecutive replacement points are at distance at most $2R+1$.  If they are
$g^{-1}o$ and $k^{-1}o$, set $z=(kg^{-1})^{-1}o$.  Then
$a_z\in A$ satisfies $a_z^{-1}o=z$.  Hence $kg^{-1}\in Ha_z$, so $Hg$ and $Hk$ are
adjacent in the enlarged orbital graph.  This proves that
$H\backslash\Gamma\to\Gamma o$ is a quasi-isometry, and $\Gamma o$ is
$R$-dense in $Z$.

For general $P$, put $L=P\cap H$.  Adjoin to the generating set finite sets
of representatives for $L\backslash P$ and $L\backslash H$.  Each of the
surjections
$L\backslash\Gamma\to P\backslash\Gamma$ and
$L\backslash\Gamma\to H\backslash\Gamma$ is then Lipschitz, and every fibre
has diameter at most one.  A path in either target orbital graph lifts edge
by edge to $L\backslash\Gamma$, with at most one final edge inside a fibre;
therefore both maps are quasi-isometries.  Changing back to any finite
generating set preserves the quasi-isometry classes.
\end{proof}

We now prove Theorem~\ref{thm:intro-main-commensurated}.  The proof also
shows that the Bass--Serre tree in condition~\textup{(iv)} is locally finite
and that all of its vertex and edge stabilizers are commensurable with $P$.

\begin{proof}[Proof of Theorem~\ref{thm:intro-main-commensurated}]
We first prove that \textup{(i)} implies \textup{(ii)}.  Let $S_0$ and a spanning tree
$T\subseteq C_{S_0}$ witness (i).  Enlarge $S_0$ to a finite symmetric
generating set $S$ containing a finite generating set of $P$.  The same
tree $T\subseteq C_S$ still witnesses $T$-POTP, since the tree-indexed
pseudo-orbit condition has not changed.  Put $E=\mathcal R_S(T)$.  Since $P$ is
commensurated, Corollary~\ref{cor:coset-full-shift-tree} gives
\begin{equation}
 |P\backslash E|<\infty.
\label{eq:commensurated-finite-paths}
\end{equation}
For $g\in\Gamma$, let $\mathcal T_{Pg}=\operatorname{Hull}_T(Pg)$ be the convex
hull of the fibre $Pg$ in $T$, that is, the union of all paths
$[u,v]_T$ with $u,v\in Pg$.  We claim that its image in
$\mathcal O_S(\Gamma,P)$ has uniformly bounded diameter.  If
$u,v\in Pg$, the path $[u,v]_T$ is contained in a finite union of
replacement paths of Cayley edges labelled by generators of a fixed finite generating set of
$P$.  Indeed, connect $u$ to $v$ inside the Cayley graph of the fibre using
those generators and replace every edge by its $T$-geodesic.  If $w$ lies
on such a replacement path with terminal endpoint $q$, then $r=qw^{-1}$ belongs to
$E$.  Choose a finite set $F\subseteq\Gamma$ with $E\subseteq PF$.  Writing
$r=pf$, we obtain $Pq=Prw=Pf w$, and hence
\begin{equation}
 d_{\mathcal O_S(\Gamma,P)}(Pw,Pq)\leq |f|_S.
\label{eq:replacement-orbital-radius}
\end{equation}
The derivation of \eqref{eq:replacement-orbital-radius} uses only that
$r=qw^{-1}$ belongs to $E$; hence it applies to every $S$-edge replacement
path, not only to those labelled by a generator of $P$.

Thus there is a number $D$, independent of $Pg$, such that
\begin{equation}
 \diam_{\mathcal O_S(\Gamma,P)}\pi(\mathcal T_{Pg})\leq D,
\label{eq:fibre-subtree-orbital-bound}
\end{equation}
where $\pi:\Gamma\to P\backslash\Gamma$ is the canonical projection onto
the right-coset space.

For a coset $C=Pg$, enlarge $\mathcal T_C$ to the connected subtree
\begin{equation}
 U_C=\mathcal T_C\cup
 \bigcup\{[h,sh]_T:s\in S,\ C\in\{Ph,Psh\}\}.
\label{eq:incident-replacement-subtree}
\end{equation}
Every added path meets $\mathcal T_C$ at its endpoint in $C$, and
\eqref{eq:replacement-orbital-radius} shows that $\pi(U_C)$ has diameter bounded
uniformly in $C$.  For a vertex $t$ of $T$, set
\begin{equation}
 \mathcal B_t=\{C\in P\backslash\Gamma:t\in U_C\}.
\label{eq:orbital-tree-bags}
\end{equation}
The set of $t$ for which a fixed coset $C$ belongs to $\mathcal B_t$ is
the connected subtree $U_C$.  If $Ph$ and $Psh$ are adjacent for some
$s\in S$, then
$[h,sh]_T\subseteq U_{Ph}\cap U_{Psh}$, so some bag contains both
endpoints.  These are precisely the two axioms for
$(T,(\mathcal B_t)_{t\in T})$ to be a tree decomposition.  Finally, if
$C\in\mathcal B_t$, then $\pi(t)$ lies in the uniformly bounded set
$\pi(U_C)$ containing $C$.  Hence every bag $\mathcal B_t$ lies in one
ball of uniformly bounded radius.  Since the orbital graph is locally
finite, the bags are finite as well, and the decomposition has uniformly
  bounded outer diameter.  Theorem~\ref{thm:background-Berger-Seymour}
makes that graph quasi-isometric to a tree.

For the implication from \textup{(ii)} to \textup{(i)},
Theorem~\ref{thm:background-Antolin} provides an abstract tree $U$ on the
vertex set of $\mathcal O_S(\Gamma,P)$ for which the identities in both
directions are Lipschitz.
Lemma~\ref{lem:uniform-coset-tree-lifting} gives $S'$ and $T$ with
$|P\backslash\mathcal R_{S'}(T)|<\infty$.  The commensurated case of
Corollary~\ref{cor:coset-full-shift-tree} proves (i).

We next prove that \textup{(ii)} holds if and only if \textup{(iii)} holds.
Put $G=\Gamma//P$.  The
orbital graph is a Cayley--Abels graph of the compactly generated group $G$,
so this equivalence is exactly
Theorem~\ref{thm:background-lc-quasitree}.

Assume (iii), subdividing edges if necessary to remove inversions, and let
$\mathcal T$ be the resulting locally finite tree.  Write
$\iota:\Gamma\to G$ for the dense homomorphism.  Vertex and edge stabilizers
in $G$ are compact open.  In the reduced Schlichting completion,
$\overline P$ is the stabilizer of the base coset, so
$\iota^{-1}(\overline P)=P$.  Any two compact open subgroups of $G$ are
commensurable; consequently the inverse image in $\Gamma$ of every vertex or
edge stabilizer is commensurable with $P$.

The dense subgroup $\iota(\Gamma)$ has the same vertex and edge orbits as
$G$: for a vertex or edge $z$, its stabilizer is open, so every coset
$gG_z$ meets $\iota(\Gamma)$.  Hence the quotient by $\Gamma$ is the same
finite graph as the quotient by $G$.
Theorem~\ref{thm:background-Bass-Serre} applied to the restricted action now
proves (iv).

Conversely, suppose (iv).  Commensurability is transitive, so each edge group
has finite index in each incident vertex group.  The Bass--Serre tree is
therefore locally finite, and the action is cocompact.  Every vertex
stabilizer is commensurable with $P$.
Lemma~\ref{lem:orbit-map-comparison} makes the orbital coset graph
quasi-isometric to this Bass--Serre tree, giving (ii).
\end{proof}

Since the fixed-tree criterion for an SFT depends only on $P_\ell(X)$,
Theorem~\ref{thm:intro-main-commensurated} also classifies every nonempty
SFT whose common left-period subgroup is finitely generated and
commensurated.

\section{Cohomological codimension one}
\label{sec:applications}

We now apply the commensurated classification to virtual cohomological
dimension.  For the coset shifts considered below, ordinary POTP follows
from finite generation of $P$, whereas Cayley-tree POTP is equivalent to
$\operatorname{vcd}(\Gamma)=\operatorname{vcd}(P)+1$.

Following \cite{Margolis}, a group $G$ is of type VFP over
$\mathbb Z$ if some finite-index subgroup $G_0$ admits a finite-length
resolution of the trivial $\mathbb ZG_0$-module $\mathbb Z$ by finitely
generated projective $\mathbb ZG_0$-modules.  Its virtual cohomological
dimension is $\operatorname{vcd}(G)=\operatorname{cd}_{\mathbb Z}(G_0)$;
this number is independent of the finite-index subgroup used to compute it.

The following two facts are Margolis's codimension-one theorem and
finite-index criterion.

\begin{theorem}
\cite[Theorem~1.2 and Proposition~1.3]{Margolis}
\label{thm:background-Margolis}
Let $H\leq G$ be a commensurated subgroup, and suppose that both $H$ and $G$
are of type VFP over $\mathbb Z$.
\begin{enumerate}[label=\textup{(\alph*)}]
\item If
$\operatorname{vcd}(G)=\operatorname{vcd}(H)+1$, then $G$ is the
fundamental group of a finite graph of groups in which every vertex and edge
group is commensurable with $H$.
\item The subgroup $H$ has finite index in $G$ if and only if
$\operatorname{vcd}(H)=\operatorname{vcd}(G)$.
\end{enumerate}
\end{theorem}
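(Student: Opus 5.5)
Theorem~\ref{thm:background-Margolis} is quoted from \cite{Margolis}, so in this paper it needs no independent proof. If I had to reconstruct it, I would transfer everything to the reduced Schlichting completion $G//H$ and work with cohomological dimension relative to compact open subgroups. The key intermediate claim would be
\[
 \operatorname{vcd}(G)=\operatorname{vcd}(H)+\operatorname{cd}(G//H),
\]
where $\operatorname{cd}(G//H)$ is a discrete cohomological dimension of the totally disconnected locally compact group, defined using permutation modules $\mathbb Z[G/K]$ with $K$ compact open. By Theorem~\ref{thm:background-Schlichting}, the $G$-set $H\backslash G$ is identified with the coset space of the completion by $\overline H$. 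So $\mathbb Z[H\backslash G]$ is the basic discrete permutation module for $G//H$.

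The first step is to reduce to torsion-free finite-index subgroups $G_0$ and $H_0=H\cap G_0$. Commensuration and type VFP both pass to these subgroups, and so do the quantities in (a) and (b). For the additivity claim I would use a Lyndon--Hochschild--Serre type spectral sequence for the Hecke pair, with $E_2$-term
\[
 H^p\bigl(G//H;\,H^q(H,\mathbb ZG)\bigr)\Rightarrow H^{p+q}(G,\mathbb ZG).
\]
Type VFP makes $H^q(H,\mathbb ZG)$ vanish above $q=\operatorname{vcd}(H)$ and nonzero at $q=\operatorname{vcd}(H)$. I would then identify the corner term $E_2^{\operatorname{cd}(G//H),\operatorname{vcd}(H)}$ as the top nonvanishing group of the abutment.

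Part (b) then follows. The forward direction is Serre's theorem that a finite-index subgroup of a torsion-free group of finite cohomological dimension has the same cohomological dimension. For the converse, equality of the two dimensions forces $\operatorname{cd}(G//H)=0$. A totally disconnected locally compact group of discrete dimension $0$ is compact, so $\overline H$ has finite index in the completion. Hence $H$ has finite index in $G$ by the coset identification of Theorem~\ref{thm:background-Schlichting}.

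For part (a), the codimension-one hypothesis gives $\operatorname{cd}(G//H)=1$. I would invoke the locally compact analogue of the Stallings--Swan theorem (Castellano--Weigel): a compactly generated totally disconnected locally compact group of discrete cohomological dimension one acts continuously, properly and cocompactly on a locally finite tree. This is condition~\textup{(iii)} of Theorem~\ref{thm:intro-main-commensurated}. Repeating the argument that proved \textup{(iii)}$\Rightarrow$\textup{(iv)} there then gives the finite graph of groups. Its vertex and edge groups are commensurable with $H$ because they are preimages of compact open stabilizers.

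I expect the main obstacle to be the spectral sequence and the nonvanishing of the corner term. Two points need care. First, one must show that $H^{\operatorname{vcd}(H)}(H,\mathbb ZG)$ is a sufficiently free discrete module for the completion that the corner term is nonzero. Second, the Castellano--Weigel theory is often stated over $\mathbb Q$, so the integral statement needs an extra argument. I would first try to bypass the coefficient issue by using that torsion-free groups of type FP have top cohomology with $\mathbb ZG$ coefficients that is nonzero and torsion-free. I would also check whether Dunwoody's accessibility-type cut argument on the Cayley--Abels graph $\mathcal O_S(G,H)$ can replace the rational tree theorem.
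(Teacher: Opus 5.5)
Quoting \cite{Margolis} is exactly what the paper does. It gives no proof of this statement, so your first sentence is all that is needed here. The reconstruction you sketch, however, breaks at its central step. The identity $\operatorname{vcd}(G)=\operatorname{vcd}(H)+\operatorname{cd}(G//H)$ is false over $\mathbb Z$ already for $G=H\times Q$ with $H,Q$ torsion-free of type FP, where $G//H=Q$ is discrete. In general only $\operatorname{cd}(G)\le\operatorname{cd}(H)+\operatorname{cd}(Q)$ holds. Your corner term is $H^{n}(H,\mathbb ZH)\otimes_{\mathbb Z}H^{d}(Q,\mathbb ZQ)$, with $n=\operatorname{cd}H$ and $d=\operatorname{cd}Q$, and it vanishes when the first factor is $2$-torsion and the second is $3$-torsion. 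This happens for torsion-free finite-index subgroups of the right-angled Coxeter groups whose nerves are flag triangulations of $\mathbb{R}P^2$ and of the $2$-dimensional mod-$3$ Moore space; this is the mechanism behind Dranishnikov's non-additivity of $\operatorname{vcd}$ for products of Coxeter groups. Both groups have $\operatorname{cd}=3$, yet by the K\"unneth formula the product has $\operatorname{cd}\le5$. Equality requires a Fel'dman-type hypothesis, e.g.\ $\mathbb Z$-freeness of $H^{n}(H_0,\mathbb ZH_0)$. Parts (a) and (b) use precisely the superadditive direction, to force $\operatorname{cd}(G//H)\le1$ and $\operatorname{cd}(G//H)=0$ respectively. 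So the first difficulty you flag is the actual obstruction, not a technicality.

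Your proposed bypass also fails. For a torsion-free group of type FP, the top group $H^{n}(G,\mathbb ZG)$ is nonzero but need not be torsion-free; torsion-freeness is exactly the extra condition in the Bieri--Eckmann criterion for duality groups. The $\mathbb{R}P^2$ example has $\operatorname{cd}_{\mathbb Z}=3$ but $\operatorname{cd}_{\mathbb Q}=2$, so its $H^{3}$ with group-ring coefficients is a nonzero torsion group. For the same reason, moving to $\mathbb Q$, where the Castellano--Weigel tree theorem lives, loses the hypothesis. Even granting rational additivity, integral codimension one gives only $\operatorname{cd}_{\mathbb Q}(G//H)\le1+\bigl(\operatorname{vcd}(H)-\operatorname{vcd}_{\mathbb Q}(H)\bigr)$. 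Over $\mathbb Z$ itself the modules $\mathbb Z[G/K]$ are not projective: a nontrivial finite group has infinite integral cohomological dimension. So the integral discrete theory your spectral sequence presupposes is not available off the shelf. Over $\mathbb F_p$ the same problem occurs when $p$ divides the order of the compact open subgroups. A working reconstruction would therefore have to handle coefficients adapted to $H$ inside a suitable relative theory for $G//H$, or argue coarsely on $\mathcal O_S(G,H)$. For this paper, the citation is the proof.
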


We shall also use Brown's cellular cohomological-dimension inequality.  We
state the general form followed by the tree specialization needed below.

\begin{theorem}
\cite[Chapter~VIII, \S2, Exercise~4, and \S3, Theorem~3.1]{Brown}
\label{thm:background-Brown-cellular-cd}
If a group $G$ acts cellularly on an acyclic $G$-complex $Y$, then
\[
 \operatorname{cd}_{\mathbb Z}(G)
 \leq
 \sup_{\sigma}
 \bigl\{\operatorname{cd}_{\mathbb Z}(G_\sigma)+\dim\sigma\bigr\},
\]
where $\sigma$ ranges over representatives of the $G$-orbits of cells and
$G_\sigma$ denotes the stabilizer of $\sigma$.  In particular, if $G$ acts
without inversions on a tree $\mathcal T$, then
\[
 \operatorname{cd}_{\mathbb Z}(G)
 \leq
 \max\left\{
   \sup_{v\in V\mathcal T}\operatorname{cd}_{\mathbb Z}(G_v),
   1+\sup_{e\in E\mathcal T}\operatorname{cd}_{\mathbb Z}(G_e)
 \right\},
\]
where $G_v$ and $G_e$ are the vertex and edge stabilizers.  An arbitrary
tree action reduces to this case by barycentric subdivision.
Moreover, if $G$ is torsion-free and $H\leq G$ has finite index, then
$\operatorname{cd}_{\mathbb Z}(H)=\operatorname{cd}_{\mathbb Z}(G)$.
\end{theorem}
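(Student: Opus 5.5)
The plan is to prove the three assertions in turn. The first comes from comparing the cellular chain complex of $Y$ with projective resolutions of the cell stabilizers. The tree inequality is then the special case of a one-dimensional complex. The finite-index equality is Serre's theorem, which I would derive from the general inequality via a coinduced complex. Throughout I use the characterization $\operatorname{cd}_{\mathbb Z}(G)=\sup\{n:H^n(G,M)\neq0\text{ for some }\mathbb ZG\text{-module }M\}$. If the supremum on the right-hand side is infinite there is nothing to prove, so assume it is a finite number $N$.

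\emph{General inequality.} Because $Y$ is acyclic, the augmented cellular chain complex $C_*(Y)\to\mathbb Z$ is an exact sequence of $\mathbb ZG$-modules. In each degree $p$ there is a decomposition
\[
 C_p(Y)\cong\bigoplus_{\sigma}\mathbb ZG\otimes_{\mathbb ZG_\sigma}\mathbb Z_\sigma ,
\]
with $\sigma$ running over orbit representatives of $p$-cells. Here $\mathbb Z_\sigma$ is $\mathbb Z$ twisted by the orientation character of $G_\sigma$.

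I would form the equivariant cohomology spectral sequence of the double complex $\operatorname{Hom}_G(C_*(Y)\otimes F_*,M)$, where $F_*$ is a projective resolution of $\mathbb Z$ over $\mathbb ZG$. Acyclicity of $Y$ makes it converge to $H^{p+q}(G,M)$. Shapiro's lemma identifies the $E_1$-term:
\[
 E_1^{pq}=\prod_{\sigma\in\Sigma_p}H^q(G_\sigma,M\otimes\mathbb Z_\sigma).
\]
Twisting by the character does not change the cohomological dimension of $G_\sigma$. Hence $E_1^{pq}=0$ whenever $p+q>N$, so $H^n(G,M)=0$ for all $n>N$ and all $M$, which gives $\operatorname{cd}_{\mathbb Z}(G)\le N$.

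An equivalent route avoids spectral sequences. Resolve each $\mathbb Z_\sigma$ projectively over $G_\sigma$ in length $\operatorname{cd}(G_\sigma)$ and induce up, which preserves projectivity. Then assemble these resolutions inductively on $p$ by iterated mapping cones into a projective resolution of $\mathbb Z$ of length $\sup(\operatorname{cd}G_\sigma+\dim\sigma)$.

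\emph{Trees.} A tree is a contractible, hence acyclic, one-dimensional complex. If $G$ acts without inversions, each edge stabilizer fixes the orientation of its edge, so every character $\mathbb Z_\sigma$ is trivial. The orbit representatives are then vertices (dimension $0$) and edges (dimension $1$), and the displayed maximum is exactly the general bound. If the action has inversions, barycentric subdivision removes them. The stabilizers of the new cells are old vertex stabilizers, or edge stabilizers together with their index-at-most-two orientation-preserving subgroups, so the bound is unchanged.

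\emph{Finite index.} Shapiro's lemma with coinduced modules gives $H^n(H,M)\cong H^n(G,\operatorname{Coind}_H^GM)$, hence $\operatorname{cd}(H)\le\operatorname{cd}(G)$ with no hypotheses. The reverse inequality is the main obstacle, and it is where torsion-freeness is needed. I may assume $\operatorname{cd}(H)=n<\infty$. First I show that $\operatorname{cd}(G)<\infty$, following Serre's construction.
\begin{enumerate}[label=\textup{(\arabic*)}]
\item Choose a contractible free $H$-CW-complex $X$ of finite dimension, using an Eilenberg--Ganea type construction. In the worst case $\dim X\le\max(n,3)$, which is still finite, and finiteness is all that is needed.
\item Form the coinduced complex $\operatorname{Map}_H(G,X)\cong X^{[G:H]}$ with its natural $G$-action. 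It is contractible and finite-dimensional.
\item Since $H$ acts freely on $X$, each cell stabilizer in $G$ meets a conjugate of $H$ trivially, so it is finite. Torsion-freeness of $G$ makes all cell stabilizers trivial.
\item By the general inequality, $\operatorname{cd}(G)\le\dim X\cdot[G:H]<\infty$.
\end{enumerate}
With $\operatorname{cd}(G)=m$ finite, there is a free module $F$ with $H^m(G,F)\ne0$. For finite index, induced and coinduced modules agree, so $H^m(H,F)\cong H^m(G,\operatorname{Coind}_H^GF)$. Since $\operatorname{Coind}_H^GF\cong\operatorname{Ind}_H^GF$ contains $F$ as a direct summand, this group is nonzero. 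Hence $\operatorname{cd}(H)\ge m$, completing the equality.
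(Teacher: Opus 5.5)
Your proposal is correct and is essentially the standard argument in the cited sections of Brown; the paper itself gives no proof beyond that citation. You use the equivariant spectral sequence (or dimension shifting) for the general inequality, and Serre's coinduced complex $\operatorname{Map}_H(G,X)$ together with the finite-index comparison via a free module for the equality. One point to tighten: $\operatorname{Ind}_H^G F$ contains $F$ as a direct summand because $F$ is free, so that $\operatorname{Ind}_H^G F$ is free of rank $[G:H]\cdot\operatorname{rank}F$; this fails for arbitrary modules.
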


We now prove the cohomological application stated in the introduction.

\begin{proof}[Proof of Corollary~\ref{cor:vcd-tree-POTP}]
Groups of type VFP are finitely generated.  Hence $\Gamma$ and $P$ are
finitely generated, and the relations
\[
 x(ph)=x(h)
 \qquad(p\in S_P,\ h\in\Gamma)
\]
for a finite generating set $S_P$ of $P$ define
$\mathcal A^{P\backslash\Gamma}$ as an
SFT.  It therefore has ordinary POTP by
Theorem~\ref{thm:CL-SFT}.

Suppose first that
$\operatorname{vcd}(\Gamma)=\operatorname{vcd}(P)+1$.  Part~(a) of
Theorem~\ref{thm:background-Margolis} gives a finite graph-of-groups
decomposition of $\Gamma$ in which every vertex and edge group is
commensurable with $P$.  The implication from \textup{(iv)} to \textup{(i)} in
Theorem~\ref{thm:intro-main-commensurated} gives Cayley-tree POTP.

Conversely, suppose that the coset shift has Cayley-tree POTP and put
$n=\operatorname{vcd}(P)$.  The same classification supplies a finite
graph-of-groups decomposition with every local group commensurable with $P$.
Let $\Gamma_0\leq\Gamma$ be a torsion-free finite-index subgroup of
cohomological dimension $\operatorname{vcd}(\Gamma)$, and let $\Gamma_0$ act
on the associated Bass--Serre tree.  Every vertex and edge stabilizer for
this action is a torsion-free group commensurable with $P$, hence has
cohomological dimension $n$ by the finite-index assertion in
Theorem~\ref{thm:background-Brown-cellular-cd}.  The dimension inequality for
a group acting on a tree, from the same theorem, now gives
\[
 \operatorname{vcd}(\Gamma)=\operatorname{cd}(\Gamma_0)\leq n+1.
\]
On the other hand,
$P\cap\Gamma_0\leq\Gamma_0$ has cohomological dimension $n$, so
$n\leq\operatorname{vcd}(\Gamma)$.  If equality held, then the
finite-index criterion in part~(b) of
Theorem~\ref{thm:background-Margolis} would give
$[\Gamma:P]<\infty$, contrary to the hypothesis.  Therefore
$\operatorname{vcd}(\Gamma)=n+1$.
\end{proof}

Thus ordinary POTP holds whenever the period subgroup $P$ is finitely
generated, while Cayley-tree POTP is equivalent, under the hypotheses of
Corollary~\ref{cor:vcd-tree-POTP}, to
$\operatorname{vcd}(\Gamma)-\operatorname{vcd}(P)=1$.

\begin{example}
\label{ex:single-splitting-not-enough}
Let $P=\Z$ and $\Gamma=P\times(\Z^2*\Z^2)$, with $P$ embedded as the first
factor.  Then $P$ is finitely generated, normal, and of infinite index, and
\[
 \Gamma=(P\times\Z^2)*_P(P\times\Z^2).
\]
Thus $\Gamma$ already has a nontrivial splitting over $P$.  Nevertheless,
the orbital coset graph is quasi-isometric to $\Z^2*\Z^2$, which is not a
quasi-tree: a finitely generated group quasi-isometric to a tree is virtually
free, whereas $\Z^2*\Z^2$ contains $\Z^2$.  Consequently
$\mathcal A^{P\backslash\Gamma}$ has ordinary POTP but has no Cayley-tree POTP by
Theorem~\ref{thm:intro-main-commensurated}.  Hence the existence of one
splitting over $P$ does not imply any of the equivalent conditions
\textup{(i)--(iv)} in that theorem.
\end{example}

\section{Finite-index extensions and generic Cayley-tree POTP}
\label{sec:generic-STRP}

This section combines Theorem~\ref{thm:intro-main-universal} with generic
Cantor dynamics.  The finite-index argument below also answers the virtually cyclic
STRP problem posed by Doucha.

Let $G$ be countable and let $\mathcal A$ be a finite alphabet.  Denote by
$\mathcal S_G(\mathcal A)$ the compact space of nonempty $G$-subshifts of
$\mathcal A^G$ with the Vietoris topology.  If $F\subseteq G$ is finite and
$X\in\mathcal S_G(\mathcal A)$, put
\[
 X_F=\{x|_F:x\in X\},
 \qquad
 \mathcal N_X^F
 =\{Y\in\mathcal S_G(\mathcal A):Y_F=X_F\}.
\]
These sets form a clopen neighborhood basis.  Following
\cite[Definition~2.22]{DouchaSTRP}, a subshift
$X\in\mathcal S_G(\mathcal A)$ is \emph{projectively isolated} if there are
a finite alphabet $\mathcal B$, a subshift
$Y\in\mathcal S_G(\mathcal B)$, a block map
$\phi:\mathcal B^G\to\mathcal A^G$, and an open neighborhood
$\mathcal U$ of $Y$ in $\mathcal S_G(\mathcal B)$ such that
$\phi(Z)=X$ for every $Z\in\mathcal U$.  Since the sets
$\mathcal N_Y^E$ form a neighborhood basis, one may choose a finite
$E\subseteq G$ such that $\phi(Z)=X$ for every
$Z\in\mathcal N_Y^E$.

We use the following results in precisely the forms stated here.

\begin{theorem}
\cite[Theorems~3.1, 4.12, 6.3 and~6.9]{DouchaSTRP}
\label{thm:background-Doucha-STRP}
Let $G$ be a countable group.
\begin{enumerate}[label=\textup{(\alph*)}]
\item The group $G$ has STRP if and only if, for every finite alphabet
$\mathcal A$ with $|\mathcal A|\geq2$, the projectively isolated subshifts
are dense in $\mathcal S_G(\mathcal A)$.
\item Every finitely generated free group has STRP.
\item If $G$ is finitely generated, then ordinary POTP is generic in
$\operatorname{Act}_G(\mathfrak C)$ if and only if $G$ has STRP.
\end{enumerate}
\end{theorem}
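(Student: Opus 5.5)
Since this statement collects results of Doucha \cite{DouchaSTRP}, the plan is to cite those results and record why each part has the form we use. The plan also indicates how I would reconstruct the arguments if needed. I would treat the three parts separately, in the order (a), (b), (c), because (b) and (c) are most naturally derived from the criterion in (a).

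For part~(a), I would follow the Kechris--Rosendal framework. First, identify $\operatorname{Act}_G(\mathfrak C)$ with an inverse-limit space of finite clopen-partition approximations. A finite clopen partition of $\mathfrak C$ into $|\mathcal A|$ pieces, together with an action, yields a $G$-subshift of $\mathcal A^G$ via the itinerary map. Neighbourhoods of an action then correspond to the cylinder neighbourhoods $\mathcal N_X^F$ of its symbolic factors. Second, an action has comeager conjugacy class exactly when the associated class of finite approximations has the joint embedding property together with a dense set of ``isolated'' approximations that amalgamate weakly. Here amalgamation for subshifts amounts to taking factor maps. Third, projective isolation is precisely the symbolic translation of this weak isolation: some $Y$ has a whole neighbourhood $\mathcal N_Y^E$ mapping onto $X$ under a fixed block map. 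The joint embedding property is automatic, since products of subshifts realise any two as factors. Hence density of projectively isolated subshifts, for every alphabet, is equivalent to a comeager conjugacy class.

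For part~(b), using (a), it suffices to show that projectively isolated subshifts are dense in $\mathcal S_{F_n}(\mathcal A)$. First, approximate any $X$ in $\mathcal N_X^F$ by an SFT, taking the SFT defined by the allowed $F'$-patterns of $X$ for a large window $F'$. Second, show that on a free group every SFT is projectively isolated. The natural argument recodes the SFT as a one-step (nearest-neighbour) SFT $Y$ over a larger alphabet $\mathcal B$. On a free group, nearest-neighbour constraints along the tree-shaped Cayley graph have no cycles, so any subshift $Z$ sufficiently close to $Y$, meaning one that has the same one-step patterns, still lies in $Y$. Moreover, the pruned core of $Z$ is $Y$ itself, and the block map returns $X$.

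I expect this last step to be the main obstacle. One must handle subshifts in $\mathcal N_Y^E$ that are strictly smaller than $Y$ and still ensure that their image is all of $X$. I would first try to choose $Y$ minimal among nonempty one-step SFTs with the given projection, so that any nearby $Z$ contains enough configurations to project onto $X$.

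For part~(c), I would compare pseudo-orbit tracing with the symbolic approximations. If ordinary POTP is generic, then the symbolic factors of a generic action are SFT-like and stable under perturbation, which gives projective isolation and hence STRP via (a). Conversely, under STRP the generic action is the ``projective limit'' of projectively isolated SFT-type factors. Tracing then follows from Theorem~\ref{thm:CL-SFT} at each finite level, combined with compactness across levels. In the paper itself, however, all three parts are simply imported with precise references, and only their stated forms are used in Section~\ref{sec:generic-STRP}.
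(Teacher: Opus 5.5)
Citing Doucha is what the paper does. Its only addition is the remark that the inversion $Jx(k)=x(k^{-1})$ transports Doucha's left-shift statements to the right shift, which your plan omits. As a citation, then, your proposal is fine. The reconstruction you sketch for~(b), however, rests on a lemma that is false, not merely hard.

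It is not true that every SFT on a free group is projectively isolated. Already for $F_1=\mathbb Z$, no uncountable subshift is projectively isolated, in particular not the full shift $\mathcal A^{\mathbb Z}$. To see this, take any $Y\subseteq\mathcal B^{\mathbb Z}$ and $n\in\N$.
\begin{enumerate}[label=(\arabic*)]
\item Consider the finite graph whose vertices are the $2n$-words of $Y$ and whose edges are its $(2n+1)$-words. Each such word $w$ lies on the bi-infinite path read off from some point of $Y$.
\item Follow that path forwards and backwards from $w$ until a vertex repeats, then loop. This gives a point $x_w$ that is eventually periodic in both directions and all of whose $(2n+1)$-words are words of $Y$.
\item The union $Z'$ of the orbit closures of the finitely many $x_w$ is a countable subshift with $Z'_{[-n,n]}=Y_{[-n,n]}$, so $Z'\in\mathcal N_Y^{[-n,n]}$.
\item For every block map $\phi$, the image $\phi(Z')$ is countable.
\end{enumerate}
Since the sets $\mathcal N_Y^{[-n,n]}$ form a neighbourhood basis of $Y$, no choice of $Y$, minimal or otherwise, can make all nearby subshifts map onto an uncountable $X$.

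The concrete false step is ``the pruned core of $Z$ is $Y$ itself''. Subshifts close to an SFT $Y$ do lie inside $Y$, but they are in general proper subshifts, in every rank. For the full shift, forbid the constant pattern on a ball much larger than $E$: this gives a proper subshift with the same $E$-patterns. The tree structure does nothing to prevent this. The density in~(b) therefore has to be achieved with far more special approximants (over $\mathbb Z$ they must be countable), and that is the real content of the cited theorem.

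Your sketch of~(c) likewise assumes the main point. Projective isolation by itself gives only soficity: if $\mathcal N_Y^E\subseteq\mathcal U$, then the SFT defined by the allowed $E$-patterns of $Y$ lies in $\mathcal N_Y^E$ and hence maps onto $X$. An inverse limit of sofic factors need not have POTP. Every Cantor action is the inverse limit of its subshift factors, and the even shift is sofic but lacks POTP by Theorem~\ref{thm:CL-SFT}.

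So the ``SFT-type'' levels you invoke must actually be produced, for instance as a cofinal system of SFT factors with a Mittag-Leffler-type condition on the bonding maps. Conversely, generic POTP must be converted into density of projectively isolated subshifts. Neither step follows from Theorem~\ref{thm:CL-SFT} plus compactness. Your outline of~(a) is a reasonable heuristic for the Kechris--Rosendal criterion (joint embedding plus weak amalgamation), and the problems lie elsewhere.
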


\begin{lemma}
\cite[Lemmas~2.25 and~2.27]{DouchaSTRP}
\label{lem:background-projective-isolation}
A factor of a projectively isolated subshift is projectively isolated.
Moreover, a projective-isolation witness may be recoded so that its
isolating factor map is induced by a one-block alphabet map.
\end{lemma}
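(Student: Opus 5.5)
For the first assertion I would compose factor maps directly. Suppose $X\in\mathcal S_G(\mathcal A)$ is projectively isolated, witnessed by $\mathcal B$, $Y\in\mathcal S_G(\mathcal B)$, a block map $\phi:\mathcal B^G\to\mathcal A^G$ and a finite $E\subseteq G$ with $\phi(Z)=X$ for all $Z\in\mathcal N_Y^E$. Let $X'\in\mathcal S_G(\mathcal A')$ be a factor of $X$ via $\psi:X\to X'$. By the Curtis--Hedlund--Lyndon theorem, $\psi$ is given by a local rule on some finite window $F'$, with $x'(g)=f'\bigl((\sigma_gx)|_{F'}\bigr)$. That rule is defined only on admissible patterns of $X$, so I would extend $f'$ arbitrarily to all of $\mathcal A^{F'}$. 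This gives a block map $\tilde\psi:\mathcal A^G\to\mathcal A'^G$ agreeing with $\psi$ on $X$. Then $\tilde\psi\circ\phi$ is a block map $\mathcal B^G\to\mathcal A'^G$. For every $Z\in\mathcal N_Y^E$ we have $\tilde\psi\phi(Z)=\tilde\psi(X)=\psi(X)=X'$. So $(\mathcal B,Y,\tilde\psi\circ\phi,\mathcal N_Y^E)$ witnesses projective isolation of $X'$.

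For the recoding assertion I would pass to a higher block presentation. Write $\phi$ with window $F\ni e$ and local rule $f:\mathcal B^F\to\mathcal A$, so $\phi(z)(g)=f\bigl((\sigma_gz)|_F\bigr)$. Put $\mathcal B'=\mathcal B^F$ and define $\beta:\mathcal B^G\to\mathcal B'^G$ by $\beta(z)(g)=(\sigma_gz)|_F$. Let $\pi:\mathcal B'^G\to\mathcal A'^G$, here $\mathcal A^G$, be the one-block map induced by $f$. Then $\phi=\pi\circ\beta$, and $\beta$ is an injective equivariant map onto the set $W\subseteq\mathcal B'^G$ of \emph{consistent} configurations. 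Consistency means $w(g)(kh)=w(kg)(h)$ whenever $h,kh\in F$, and $W$ is an SFT defined by a finite window $K\supseteq F$, for instance $K=\{e\}\cup FF^{-1}$. Put $Y'=\beta(Y)$, which is conjugate to $Y$. The goal is a finite $E'\subseteq G$ with $\pi(Z')=X$ for all $Z'\in\mathcal N_{Y'}^{E'}$.

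I expect the choice of $E'$ to be the main step. The difficulty is that subshifts close to $Y'$ need not consist of consistent configurations, so I would build the consistency window into $E'$. Choose $E'$ finite with $K\subseteq E'$ and $E\subseteq E'$. Take $Z'\in\mathcal N_{Y'}^{E'}$. The $K$-patterns of $Z'$ occurring at any translate coincide with those of $Y'$, since $Z'$ is shift-invariant and $Z'_{K}=Y'_{K}$. These patterns are all consistent, so $Z'\subseteq W$, and therefore $Z'=\beta(Z)$ with $Z=\beta^{-1}(Z')$ a nonempty subshift of $\mathcal B^G$. Since $e\in F$, the coordinate $z(t)$ equals $\beta(z)(t)(e)$. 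Hence $Z_E$ is read off from $Z'_E$, and likewise $Y_E$ from $Y'_E$. It follows that $Z_E=Y_E$, that is, $Z\in\mathcal N_Y^E$. Consequently $\pi(Z')=\pi\beta(Z)=\phi(Z)=X$, and $(\mathcal B',Y',\pi,\mathcal N_{Y'}^{E'})$ is a witness whose isolating map is one-block.

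The only points that need checking are routine. One is that $W$ really is defined by the finite window $K$. The other is that a subshift of $W$ pulls back under the homeomorphism $\beta:\mathcal B^G\to W$ to a subshift.
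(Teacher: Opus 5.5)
The paper does not prove this lemma; it imports it from Doucha's Lemmas~2.25 and~2.27, so your argument has to stand on its own. It does, with one fix. Composing with a sliding-block extension of the factor map proves the first assertion. The higher-block recoding proves the second, once the window of the consistency SFT is absorbed into the isolating window $E'$.

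The fix is an index error in your consistency condition. Under the right-shift convention $\beta(z)(g)(h)=z(hg)$. So for $w=\beta(z)$, your condition $w(g)(kh)=w(kg)(h)$ compares $z(khg)$ with $z(hkg)$, and these differ when $k$ and $h$ do not commute. As written, $\beta(Y)$ need not lie in your $W$, and the step ``these patterns are all consistent'' fails for nonabelian $G$.

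The correct condition is $w(g)(hk)=w(kg)(h)$ for $h,hk\in F$, with window $\{e\}\cup F^{-1}F$. More economically, use $w(g)(h)=w(hg)(e)$ for $h\in F$. This is an SFT condition with window $F$, because at the translate by $g$ it reads $(\sigma_gw)(e)(h)=(\sigma_gw)(h)(e)$. Any $w$ satisfying it equals $\beta(z)$ with $z(g)=w(g)(e)$, so this SFT is exactly $\beta(\mathcal B^G)$.

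With $K=F$ (or $K=\{e\}\cup F^{-1}F$) and $E'\supseteq K\cup E$, the rest of your argument goes through verbatim. That includes the reading of $Z_E$ off $Z'_E$ through the $e$-coordinate.
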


Doucha uses the left shift.  The coordinate inversion
$Jx(k)=x(k^{-1})$ conjugates it to our right-shift convention, so
Theorem~\ref{thm:background-Doucha-STRP} and
Lemma~\ref{lem:background-projective-isolation} apply to the right shift.

\begin{theorem}
\label{thm:finite-index-STRP}
Let $G$ be finitely generated and let $H\leq G$ have finite index.  If $H$
has STRP, then $G$ has STRP.
\end{theorem}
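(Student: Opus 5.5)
The plan is to use the characterization in Theorem~\ref{thm:background-Doucha-STRP}(a): it suffices to show that, for every finite alphabet $\mathcal A$ with $|\mathcal A|\geq2$, projectively isolated $G$-subshifts are dense in $\mathcal S_G(\mathcal A)$. The tool for moving between $H$ and $G$ is the standard induction (higher block) correspondence. Fix a finite set $R$ of right coset representatives, so $G=\bigsqcup_{r\in R}Hr$; one should check that this is the side compatible with the right-shift convention. A configuration $x\in\mathcal A^G$ is encoded as $\hat x\in(\mathcal A^R)^H$ by $\hat x(h)=(x(hr))_{r\in R}$. Under this encoding, the restriction of the $G$-action to $H$ becomes the $H$-right shift on $(\mathcal A^R)^H$. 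Every $G$-subshift $X$ thus yields an $H$-subshift $\hat X$, and the map $X\mapsto \hat X$ is a homeomorphism onto its image in $\mathcal S_H(\mathcal A^R)$. Conversely, any $H$-subshift $Y\subseteq(\mathcal A^R)^H$ determines the $G$-subshift $\operatorname{Ind}(Y):=\bigcap_{g\in G}\sigma_g(\widetilde Y)$, where $\widetilde Y$ is the pullback of $Y$. Alternatively, one can take the $G$-saturation $\overline{\bigcup_{g}\sigma_g\widetilde Y}$. It remains to decide which construction commutes with taking factors.

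For density, fix a $G$-subshift $X$ and a finite window $F\subseteq G$; the goal is a projectively isolated $X'$ with $X'_F=X_F$. Apply STRP of $H$ to the $H$-subshift $\hat X$ over the alphabet $\mathcal A^R$ and the finite window $F_H=\{h: hR\cap F\neq\emptyset\}$. This gives a projectively isolated $H$-subshift $Y$ that is close to $\hat X$. Write its witness as $(\mathcal B,W,\phi,E)$, with $\phi$ a one-block map by Lemma~\ref{lem:background-projective-isolation}. Here $Y$ need not be $G$-invariant, so set $X'$ to be the $G$-saturation of (the pullback of) $Y$. Since $G$ acts on $H$-subshifts by a finite group of ``conjugation-and-recoding'' operations, $X'$ is the union of the finitely many images $\sigma_r(\widetilde Y)$, $r\in R$. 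Taking $F$ large and $R$-invariant in the appropriate sense keeps $X'_F=X_F$. One must arrange this by shrinking $Y$'s neighborhood, and using $\hat{\sigma_r X}$, $r\in R$, all close to $\hat X$ because $X$ is already $G$-invariant.

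To prove that $X'$ is projectively isolated as a $G$-subshift, build a $G$-witness from the $H$-witness by induction. Take the alphabet $\mathcal B^R$ and the $G$-subshift $Z_0=\operatorname{Ind}(W)$, or a union over $R$ of conjugated copies of $W$, together with the block map $\Phi$ applying $\phi$ coordinatewise on $R$-blocks, so that $\Phi(Z_0)=X'$. The key claim is that every $G$-subshift $Z$ that is close enough to $Z_0$ on a finite window $E'\supseteq RER$ has an $H$-restriction which, after projecting the $r$-th coordinates, lies in $\mathcal N^E_{W}$ or the corresponding conjugate neighborhood. That restriction therefore maps by $\phi$ onto $Y$ or its conjugates, and so $\Phi(Z)=X'$.

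The main obstacle is this last step. The difficulty is that $H$ is not assumed normal, so the conjugates $r^{-1}Hr$ differ from $H$. The $R$-coordinates of a $G$-subshift restricted to $H$ do not form independent $H$-subshifts, and one cannot simply apply the $H$-witness coordinatewise. I would first try replacing $H$ by its normal core $N=\operatorname{core}_G(H)$, which still has finite index. This requires knowing that STRP passes from $H$ down to $N$, but that downward direction is exactly what is stated to be open. Instead, I would handle the non-normal case directly. Work with the single $H$-subshift $\hat Z\subseteq(\mathcal B^R)^H$, and choose the $H$-witness for $\hat X$ over the big alphabet $\mathcal A^R$ at the outset, so that isolation concerns the whole $R$-tuple at once. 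Then check that $\hat{\Phi(Z)}=\hat\phi(\hat Z)$ and that $\hat\cdot$ is injective on $G$-subshifts, so that $\Phi(Z)=X'$ follows from $\hat\phi(\hat Z)=\hat X'$.
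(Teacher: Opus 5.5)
\textbf{Verdict: there is a genuine gap.} It sits at the step you yourself flag as the main obstacle, and neither of your proposed resolutions closes it.

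Your outer architecture is the paper's. You reduce to density via Theorem~\ref{thm:background-Doucha-STRP}(a), encode $X$ as an $H$-subshift over $\mathcal A^R$, approximate it by a projectively isolated $Y$, and take the $G$-saturation $X'=\bigcup_q\sigma_{r_q}\Psi^{-1}(Y)$, which agrees with $X$ on $F$. One correction here: the encoding must use left cosets, $G=\bigsqcup_q r_qH$ and $\Psi(x)(h)(r)=x(rh)$. With your $x(hr)$, shifting by $h'\in H$ gives $x(hrh')$ rather than $x(hh'r)$, so the $H$-actions are not intertwined.

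The gap is building a $G$-witness from the $H$-witness.
\begin{itemize}
\item A $G$-equivariant block map $\Phi(z)(k)=f(z|_{Mk})$ has no way to tell which coset of $H$ the site $k$ lies in. So there is no $G$-block map that ``applies $\phi$ on $R$-blocks''; that operation is only $H$-equivariant.
\item Your fallback fails for related reasons. STRP of $H$ gives you a witness $W$ over an arbitrary alphabet, which in general is not the encoding $\hat Z_0$ of any $G$-subshift.
\item An arbitrary $H$-block map is not the encoding of a $G$-block map.
\item $\widehat{X'}$ is in general strictly larger than $Y$, since the saturation adds the pieces $\Psi(\sigma_{r_q}\Psi^{-1}Y)$. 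So isolation of $Y$ in $\mathcal S_H(\mathcal A^R)$ tells you nothing about $\widehat{X'}$.
\end{itemize}
In short, nothing in your construction lets a $G$-equivariant map locate the cosets of $H$ in a way that survives perturbation of the witness.

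The missing idea is an auxiliary coset-label coordinate. Write $W\subseteq\mathcal C^H$ for the $H$-witness and $\phi_0:\mathcal C\to\mathcal A^R$ for its one-block map.
\begin{itemize}
\item The paper adjoins a $Q$-valued coordinate, $Q=G/H$, carried by the finite orbit $\Omega=\{\omega_q\}$ with $\omega_q(k)=kq$. It sets $I_{\mathcal C}(W)=\bigcup_q\{(\sigma_{r_q}j_{\mathcal C}(w),\omega_q):w\in W\}$, where $j_{\mathcal C}(w)(r_qh)=w(h)$.
\item The label lets a genuine $G$-block map read off the representative: $\Phi(x,\theta)(k)=(\phi_0(x(r_{\theta(k)}^{-1}k)),\theta(k))$.
\item The label is also rigid. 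Because $G$ is finitely generated, $\Omega$ is cut out by its patterns on $\{e\}\cup S$, since these force $\theta(su)=s\theta(u)$.
\item Hence any $G$-subshift $L$ with $L_M=I_{\mathcal C}(W)_M$, for a finite $M\supseteq E\cup\{e\}\cup S$, has label projection exactly $\Omega$. The restriction to $H$ of its fibre over $\omega_{q_0}$ is then an $H$-subshift in $\mathcal N_W^E$.
\item This gives $\Phi(L)=I_{\mathcal A^R}(Y)$, so $I_{\mathcal A^R}(Y)$ is projectively isolated.
\item Finally, $X'=\Theta(I_{\mathcal A^R}(Y))$ for the block map $\Theta(x,\theta)(k)=x(r_{\theta(k)}^{-1}k)(r_{\theta(k)})$. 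Lemma~\ref{lem:background-projective-isolation} then makes $X'$ projectively isolated.
\end{itemize}
Without a $G$-equivariant coset marking of this kind that is stable under perturbation, your plan cannot make the witness both equivariant and isolating.
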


\begin{proof}
Put $Q=G/H$, regarded as the finite transitive left $G$-set of left cosets,
and let $q_0=H$.  Choose representatives
$R=\{r_q:q\in Q\}$ with $r_{q_0}=e$.  Thus every element of $G$ has a
unique expression $r_qh$, with $q\in Q$ and $h\in H$.

We first construct, from an $H$-subshift, a $G$-subshift with an auxiliary
$Q$-coordinate recording the left coset, and prove that this construction
preserves projective isolation.
For a finite alphabet $\mathcal C$, define
\[
 j_{\mathcal C}:\mathcal C^H\longrightarrow\mathcal C^G,
 \qquad
 j_{\mathcal C}(z)(r_qh)=z(h),
\]
and, for $q\in Q$, define $\omega_q\in Q^G$ by
\[
 \omega_q(k)=kq.
\]
Thus the value of the auxiliary coordinate at $k$ is the coset $kq$, and
$\sigma_g\omega_q=\omega_{gq}$.  If
$Z\subseteq\mathcal C^H$ is an $H$-subshift, put
\begin{equation}
 I_{\mathcal C}(Z)
 =\bigcup_{q\in Q}
 \{(\sigma_{r_q}j_{\mathcal C}(z),\omega_q):z\in Z\}
 \subseteq(\mathcal C\times Q)^G.
 \label{eq:coset-label-induction}
\end{equation}
For $g\in G$ write $gr_q=r_{gq}h$ with $h\in H$.  Since
$j_{\mathcal C}(\sigma_hz)=\sigma_hj_{\mathcal C}(z)$ and
$\sigma_g\sigma_{r_q}=\sigma_{gr_q}$, the set
$I_{\mathcal C}(Z)$ is a $G$-subshift.

Suppose that $Y\subseteq\mathcal B^H$ is projectively isolated.  By
Lemma~\ref{lem:background-projective-isolation}, there are an
$H$-subshift $Z\subseteq\mathcal C^H$, a one-block map
$\phi_0:\mathcal C\to\mathcal B$, and a finite set $E\subseteq H$ with
$e\in E$ such that
\begin{equation}
 \phi(Z')=Y
 \qquad\text{for every }Z'\in\mathcal N_Z^E,
 \label{eq:H-projective-witness}
\end{equation}
where $\phi$ is induced by $\phi_0$.  Define the map
$\Phi:(\mathcal C\times Q)^G\to(\mathcal B\times Q)^G$ by
\nopagebreak[4]
\begin{equation}
 \Phi(x,\theta)(k)
 =\bigl(\phi_0(x(r_{\theta(k)}^{-1}k)),\theta(k)\bigr).
 \label{eq:coset-label-factor}
\end{equation}
This is a block map with memory contained in $R^{-1}\cup\{e\}$.  It is
$G$-equivariant: both
$\Phi(\sigma_g(x,\theta))(k)$ and
$\sigma_g\Phi(x,\theta)(k)$ read
\[
 x(r_{\theta(kg)}^{-1}kg)
 \quad\text{and}\quad \theta(kg).
\]
If $k=r_th$, then $\omega_{q_0}(k)=t$ and
$r_t^{-1}k=h$.  Hence
\begin{equation}
 \Phi(j_{\mathcal C}(z),\omega_{q_0})
 = (j_{\mathcal B}(\phi(z)),\omega_{q_0}),
 \label{eq:base-label-factor}
\end{equation}
and equivariance gives
$\Phi(I_{\mathcal C}(Z))=I_{\mathcal B}(Y)$.

It remains to find a finite set $M\subseteq G$ such that the same map
$\Phi$ sends every subshift $L$ satisfying
$L_M=I_{\mathcal C}(Z)_M$ onto $I_{\mathcal B}(Y)$.
Choose a finite symmetric generating set $S$ of $G$ and put
$K_0=\{e\}\cup S$.  The $K_0$-language isolates the finite orbit of
coset-label configurations
\[
 \Omega=\{\omega_q:q\in Q\}.
\]
Indeed, if a $Q$-configuration has only $\Omega$-allowed
$K_0$-patterns, then translating the window to $u\in G$ gives
\[
 \theta(su)=s\theta(u)
 \qquad(s\in S),
\]
and therefore $\theta(k)=k\theta(e)$ for every $k\in G$.

Take a finite $M\supseteq E\cup K_0$, and let
$L\subseteq(\mathcal C\times Q)^G$ be a subshift satisfying
\[
 L_M=I_{\mathcal C}(Z)_M.
\]
Its projection to $Q^G$ is a nonempty $G$-invariant subset of the transitive
orbit $\Omega$, and hence is all of $\Omega$.  The fibre over
$\omega_{q_0}$,
\[
 L_{q_0}=\{x\in\mathcal C^G:(x,\omega_{q_0})\in L\}
\]
is nonempty, compact, and $H$-invariant.  Matching joint $M$-patterns in
both directions, the value of the $Q$-coordinate at $e$ forces the matching
component to have coset-label configuration $\omega_{q_0}$.  Consequently
\[
 Z_L:=\{x|_H:x\in L_{q_0}\}
 \quad\text{satisfies}\quad
 (Z_L)_E=Z_E.
\]
The set $Z_L$ is a nonempty $H$-subshift.  Thus
$Z_L\in\mathcal N_Z^E$ and
$\phi(Z_L)=Y$ by \eqref{eq:H-projective-witness}.  Formula
\eqref{eq:coset-label-factor} gives, for every $x\in\mathcal C^G$,
\[
 \Phi(x,\omega_{q_0})
 =\bigl(j_{\mathcal B}(\phi(x|_H)),\omega_{q_0}\bigr).
\]
It follows that
\[
 \Phi(L_{q_0}\times\{\omega_{q_0}\})
 =j_{\mathcal B}(Y)\times\{\omega_{q_0}\}.
\]
For each $q\in Q$, the fibre of $L$ over $\omega_q$ is the
$\sigma_{r_q}$-translate of the fibre over $\omega_{q_0}$.  Equivariance therefore
gives
\[
 \Phi(L)=I_{\mathcal B}(Y).
\]
We have proved that $I_{\mathcal B}(Y)$ is projectively isolated.

We now prove that projectively isolated $G$-subshifts are dense.  Fix a
finite alphabet $\mathcal A$, a
$G$-subshift $X\subseteq\mathcal A^G$, and a finite set $F\subseteq G$.
Put $\mathcal B=\mathcal A^R$ and define the $H$-equivariant homeomorphism
\[
 \Psi:\mathcal A^G\longrightarrow\mathcal B^H,
 \qquad
 \Psi(x)(h)(r)=x(rh).
\]
Let $\widehat X=\Psi(X)$ and
\[
 K=\bigcup_{q\in Q}Fr_q.
\]
Choose a finite $D\subseteq H$ such that
$K\subseteq RD:=\{rh:r\in R,\ h\in D\}$.  Since $H$ has STRP,
Theorem~\ref{thm:background-Doucha-STRP}\textup{(a)} gives a projectively
isolated $H$-subshift $Y\subseteq\mathcal B^H$ with
$Y_D=\widehat X_D$.  Put $V=\Psi^{-1}(Y)$.  Then
$V_{RD}=X_{RD}$, and in particular $V_K=X_K$.

Set
\[
 U=\bigcup_{q\in Q}\sigma_{r_q}V.
\]
If $g r_q=r_{gq}h$ with $h\in H$, then
\[
 \sigma_g(\sigma_{r_q}V)
 =\sigma_{r_{gq}}(\sigma_hV)
 =\sigma_{r_{gq}}V.
\]
Thus $U$ is a $G$-subshift.  Moreover,
\[
 (\sigma_{r_q}V)_F=(\sigma_{r_q}X)_F=X_F
 \qquad(q\in Q),
\]
because the left-hand language is determined by $V_{Fr_q}$ and
$Fr_q\subseteq K$.  Hence $U_F=X_F$.

Finally define
\begin{equation}
 \Theta:(\mathcal B\times Q)^G\longrightarrow\mathcal A^G,
 \qquad
 \Theta(x,\theta)(k)
 =x(r_{\theta(k)}^{-1}k)(r_{\theta(k)}).
 \label{eq:coset-label-forgetting-factor}
\end{equation}
The map $\Theta$ has memory contained in $R^{-1}\cup\{e\}$, and
\[
 \Theta(\sigma_g(x,\theta))(k)
 =x(r_{\theta(kg)}^{-1}kg)(r_{\theta(kg)})
 =(\sigma_g\Theta(x,\theta))(k),
\]
so it is a $G$-equivariant block map.  If $k=r_th$, then
\[
 \Theta(j_{\mathcal B}(y),\omega_{q_0})(r_th)=y(h)(r_t),
\]
so $\Theta(j_{\mathcal B}(y),\omega_{q_0})=\Psi^{-1}(y)$.  Therefore
\[
 \Theta(I_{\mathcal B}(Y))=U.
\]
The induced subshift $I_{\mathcal B}(Y)$ is projectively isolated, and
$U$ is projectively isolated by
Lemma~\ref{lem:background-projective-isolation}.  Since $X$ and $F$ were
arbitrary, projectively isolated $G$-subshifts are dense for every finite
alphabet.  Theorem~\ref{thm:background-Doucha-STRP}\textup{(a)} now gives
STRP for $G$.
\end{proof}

\begin{proof}[Proof of Corollary~\ref{cor:generic-tree-virtually-free}]
Let $\Gamma$ be finitely generated and virtually free.  It has a
finite-index finitely generated free subgroup $H$.  The trivial group has
STRP, and every nontrivial finitely generated free group has STRP by
Theorem~\ref{thm:background-Doucha-STRP}\textup{(b)}.  Thus
Theorem~\ref{thm:finite-index-STRP} gives STRP for $\Gamma$.

By Theorem~\ref{thm:background-Doucha-STRP}\textup{(c)}, ordinary POTP is
comeager in $\operatorname{Act}_\Gamma(\mathfrak C)$.  For a virtually free
group, Theorem~\ref{thm:intro-main-universal} implies that every such action
has Cayley-tree POTP, while Cayley-tree POTP always implies ordinary POTP.
Hence the two classes of actions coincide and Cayley-tree POTP is
comeager.  Every virtually cyclic group is finitely generated and virtually
free, giving the final assertion.
\end{proof}

Theorem~\ref{thm:finite-index-STRP} establishes the finite-index-overgroup
direction of Doucha's permanence question and answers the virtually cyclic
problem.  The finite-index-subgroup direction, and hence full
commensurability invariance, remains open.

\end{document}